\numberwithin{equation}{section}
\numberwithin{figure}{section}
\theoremstyle{plain}
\newtheorem{thm}{\protect\theoremname}[section]
  \theoremstyle{definition}
  \newtheorem{defn}{\protect\definitionname}[section]
  \theoremstyle{plain}
  \newtheorem{lem}{\protect\lemmaname}[section]
  \theoremstyle{plain}
  \newtheorem{cor}{\protect\corollaryname}[section]
  \theoremstyle{remark}
  \newtheorem{ex}{\protect\examplename}[section]
  \theoremstyle{remark}
  \newtheorem{rem}{\protect\remarkname}[section]
  \theoremstyle{plain}
  \newtheorem{assumption}{\protect\assumptionname}[section]
  \theoremstyle{plain}
  \newtheorem{prop}{\protect\propositionname}[section]
  \providecommand{\assumptionname}{Assumption}
  \providecommand{\corollaryname}{Corollary}
  \providecommand{\definitionname}{Definition}
  \providecommand{\lemmaname}{Lemma}
  \providecommand{\propositionname}{Proposition}
  \providecommand{\remarkname}{Remark}
  \providecommand{\examplename}{Example}
\providecommand{\theoremname}{Theorem}
\begin{document}
%\linenumbers
\title{Brownian Bridges on Random Intervals}
\author{M. L. Bedini, R. Buckdahn, H.-J. Engelbert}
\date{July 13, 2015}
\begin{abstract}
The issue of giving an explicit description of the flow of information
concerning the time of bankruptcy of a company (or a state) arriving
on the market is tackled by defining a bridge process starting from
zero and conditioned to be equal to zero when the default occurs.
This enables to catch some empirical facts on the behavior of financial markets: when the bridge process is away from zero, investors can be relatively sure that the default will not happen immediately. However, when the information process is close to zero, market agents should be aware of the risk of an imminent default. In this sense the bridge process leaks information
concerning the default before it occurs. The objective of this first
paper on Brownian bridges on stochastic intervals is to provide the
basic properties of these processes.
\end{abstract}

\keywords{Bayes Theorem, Brownian Bridge, Default Time, Markov Process, Semimartingale
Decomposition, Credit Default Swap.}

\maketitle 

\section{\label{sec:Introduction}Introduction}
\noindent Motivated by the problem of modeling the information concerning the
default time of a financial company, we propose a new approach to
credit-risk. The problem is to give a mathematical model for the flow
of information about the time at which the bankruptcy of a company
(or state) occurs. The time at which this crucial event takes place
is called \textit{default time} and it is modeled by a strictly positive
random variable $\tau$. We tackle the problem by defining the process
$\beta=(\beta_{t},\, t\geq0)$, a Brownian bridge between
0 and 0 on the random time interval $[0,\tau]$:
\begin{equation}
\beta_{t}:=W_{t}-\frac{t}{\tau\vee t}W_{\tau\vee t},\quad t\geq 0\,, \label{eq:EQ0}
\end{equation}
where $W=(W_{t},\, t\geq0)$ is a Brownian motion independent
of $\tau$. Since we are going to model the information about $\tau$
with such a bridge process, we call $\beta$ the \textit{information
process}. The filtration $\mathbb{F}^{\beta}=(\mathcal{F}_{t}^{\beta})_{t\geq0}$
generated by the information process provides partial information
on the default time before it occurs. The intuitive idea is that when
market agents observe $\beta_{t}$ away from 0, they know that the
default will not occur immediately; on the other hand, the period
of fear of an imminent default corresponds to the situation in which
$\beta_{t}$ is close to 0. In our approach the Bayes formula is of
particular importance since it allows to infer the a posteriori probability
distribution of the default time $\tau$ given the information carried
by $\mathcal{F}_{t}^{\beta}$, for $t\geq0$.

Two classes of credit risk models have greatly inspired our work:
the information-based approach and the reduced-form models. The first
one has been developed in \cite{key-20} by Brody, Hughston and Macrina.
They model the information concerning a random payoff $D_{T}$, paid
at some pre-established date $T>0$, with the natural, completed filtration
generated by the process $\xi=(\xi_{t},\,0\leq t\leq T)$,
defined by $\xi_{t}:=\beta_{t}^{T}+\alpha tD_{T}$, where $\beta^{T}=(\beta_{t}^{T},\,0\leq t\leq T)$
is a standard Brownian bridge on the deterministic time interval $[0,T]$.
The process $\beta^{T}$ is independent of $D_{T}$, and $\alpha>0$
is a positive constant. The idea is that the true information, represented
by the component $\alpha tD_{T}$, about the final payoff is disturbed
by some noisy information (rumors, mispricing, etc.), represented
by the bridge process. In this model for credit risk the information
concerning the random cash-flow $D_{T}$ is modeled explicitly but
the default time $\tau$ of the company is not. On the other hand,
in the models following the reduced-form approach for credit risk
the information on the default time is modeled by the natural, completed
filtration generated by the single-jump process $H=(H_{t}:=\mathbb{I}_{\{ \tau\leq t\} },\, t\geq0)$
occurring at $\tau$. We refer to the book 
\cite{key-21} of Bielecki and Rutkowski and the series of papers \cite{key-23,key-24,key-25} of Jeanblanc and Le Cam,
among many other works on the reduced-form approach to credit-risk.
Besides the advantages that have made this approach well-known, there
is the poor structure of the information concerning $\tau$: people
just know if the default has occurred ($H_{t}=1$) or not ($H_{t}=0$).
Financial reality is often more complex than this: market agents have
indeed more information and there are actually periods in which the
default is more likely to happen than in others. We try to reconcile
those two approaches considering that in our model the information is carried by
the process $\beta$ given by (\ref{eq:EQ0}).

This paper is organized as follows. In Section 2 we provide some preliminary facts that are used throughout the paper. In Section 3 we give the definition of a bridge $\beta$ of random length $\tau$. We show that $\tau$ is a stopping time with
respect to the natural, completed filtration generated by the process
$\beta$. Also, we establish that $\beta$ is a Markov
process with respect to the filtration $\mathbb{F}^{0}:=(\sigma(\beta_{s},\,0\leq s\leq t))_{t\geq0}$.
In Section 4 we derive Bayesian estimates of the distribution of the default time $\tau$. Thereafter, in Section 5 we extend the results obtained in the previous section to more general situations. Section 6 is devoted to the proof of the
Markov property of the information process with respect to the
filtration $\mathbb{F}^{\beta}$, the smallest filtration which contains $\mathbb{F}^0$ and satisfies the usual conditions of right-continuity and completeness. In Section 7 we show that the process $\beta$ is a semimartingale and we provide its decomposition. Finally, in Section 8 we consider an application to Mathematical Finance concerned with the problem of pricing a Credit Default Swap in an elementary market model. The paper closes with the Appendix where, for the sake of easy reference, we recall the Bayes Theorem, and we prove a slight extension of the so-called innovation lemma (used in Section \ref{sec:Semimartingale-decomposition} for the semimartingale decomposition of the information process $\beta$) and, finally,  an auxiliary result. 

This paper is based on the thesis \cite{key-10}
and the main objective of it is to introduce and study the information process $\beta$ and to provide its basic properties. Other topics of the thesis \cite{key-10} are the enlargement of the filtration $\mathbb{F}^{\beta}$ with a reference filtration $\mathbb{F}$, the classification of the default time $\tau$ (with respect to the filtration $\mathbb{F}^{\beta}$ and with respect to the enlarged filtration) and further applications to Mathematical Finance.

\section{Preliminaries}
\noindent We start by recalling some basic results on Brownian bridges and properties of conditional expectations that will be used in the sequel. The interested reader may find further discussions on Brownian bridges, among others, in the books \cite{key-1} of Karatzas and Shreve or \cite{key-3} of Revuz and Yor . 

As usual, the set of natural numbers is denoted by $\mathbb{N}$ and the set of real numbers by $\mathbb{R}$. If $A\subseteq\mathbb{R}$, then the notation $A_{+}$ stands for 
$A_{+}:=A\cap\{ x\in\mathbb{R}:x\geq0\}$.
If $E$ is a topological space, then the Borel $\sigma$-algebra over $E$ will be denoted by $\mathcal{B}(E)$. If $A$ is a set, its indicator function will be denoted by $\mathbb{I}_{A}$.

Let $(\Omega,\,\mathcal{F},\,\mathbf{P})$ be a complete
probability space. By $\mathcal{N}_{P}$ we denote the collection
of $\mathbf{P}$-null sets. If $X$ is a random
variable, the symbol $\sigma(X)$ will denote the $\sigma$-algebra generated by $X$. 

Let $W:(\Omega,\mathcal{F})\rightarrow(C,\mathcal{C})$
be a map from $\Omega$ into the space of continuous real-valued functions
defined on $\mathbb{R}_{+}$ endowed with the $\sigma$-algebra generated
by the canonical process, so that with each $\omega\in\Omega$ we associate the continuous function $W(\omega)=(W_{t}(\omega),\, t\geq0)$. 
\begin{defn}
Given a strictly positive real number $r$,
the function $\beta^{r}:\Omega\rightarrow C$ defined by 
\begin{equation*}
\beta_{t}^{r}\left(\omega\right):=W_{t}\left(\omega\right)-\frac{t}{r\vee t}W_{r\vee t}\left(\omega\right), \ t\geq0, \ \omega\in\Omega\,,
\end{equation*}
is called \textit{bridge of length $r$ associated with $W$}. If $W$ is a Brownian motion on the probability space $(\Omega,\mathcal{F},\mathbf{P})$, then the process
$\beta^{r}$ is called \textit{Brownian bridge of length $r$}.
\end{defn}
We have the following fact concerning the measurability of the process $\beta^{r}$.
\begin{lem}
The map $(r,t,\omega)\mapsto\beta_{t}^{r}(\omega)$ of $((0,+\infty)\times\mathbb{R}_{+}\times\Omega,\,\mathcal{B}((0,+\infty))\otimes\mathcal{B}(\mathbb{R}_{+})\otimes\mathcal{F})$
into $(\mathbb{R},\,\mathcal{B}(\mathbb{R}))$ is measurable. In particular, the $t$-section of $(r,t,\omega)\rightarrow\beta_{t}^{r}(\omega)$: $(r,\omega)\mapsto\beta_{t}^{r}(\omega)$
is measurable with respect to the $\sigma$-algebra $\mathcal{B}((0,+\infty))\otimes\mathcal{F}$, for all $t\geq0$.\end{lem}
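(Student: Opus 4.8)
The plan is to exhibit $\beta^{r}_{t}(\omega)$ as an algebraic combination of maps each of which is visibly jointly measurable, so that the whole statement reduces to one genuine point: the joint measurability of the canonical process $(t,\omega)\mapsto W_{t}(\omega)$. I would begin with that. By hypothesis $W$ is $(\mathcal{F},\mathcal{C})$-measurable, and since $\mathcal{C}$ is generated by the coordinate maps this says precisely that $\omega\mapsto W_{t}(\omega)$ is $\mathcal{F}$-measurable for each fixed $t\geq 0$. Because every path $t\mapsto W_{t}(\omega)$ is continuous, I would then upgrade this to joint measurability by the usual dyadic approximation: put $W^{(n)}_{t}(\omega):=W_{\lfloor 2^{n}t\rfloor/2^{n}}(\omega)$; on each dyadic interval $W^{(n)}$ is constant in $t$ and depends measurably on $\omega$ through a single coordinate, hence $W^{(n)}$ is $\mathcal{B}(\mathbb{R}_{+})\otimes\mathcal{F}$-measurable, and $W^{(n)}_{t}(\omega)\to W_{t}(\omega)$ for every $(t,\omega)$ by continuity, so the pointwise limit $(t,\omega)\mapsto W_{t}(\omega)$ is $\mathcal{B}(\mathbb{R}_{+})\otimes\mathcal{F}$-measurable.

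Next I would observe that $(r,t)\mapsto r\vee t$ and $(r,t)\mapsto t/(r\vee t)$ are continuous on $(0,+\infty)\times\mathbb{R}_{+}$ --- the second because the denominator is bounded below by $r>0$ --- hence Borel measurable. Consequently the map $(r,t,\omega)\mapsto(r\vee t,\omega)$ is measurable from $((0,+\infty)\times\mathbb{R}_{+}\times\Omega,\,\mathcal{B}((0,+\infty))\otimes\mathcal{B}(\mathbb{R}_{+})\otimes\mathcal{F})$ into $(\mathbb{R}_{+}\times\Omega,\,\mathcal{B}(\mathbb{R}_{+})\otimes\mathcal{F})$, and composing it with the jointly measurable process $W$ gives that $(r,t,\omega)\mapsto W_{r\vee t}(\omega)$ is $\mathcal{B}((0,+\infty))\otimes\mathcal{B}(\mathbb{R}_{+})\otimes\mathcal{F}$-measurable; the same holds trivially for $(r,t,\omega)\mapsto W_{t}(\omega)$. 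Since
\[
\beta^{r}_{t}(\omega)=W_{t}(\omega)-\frac{t}{r\vee t}\,W_{r\vee t}(\omega)
\]
is then a difference of products of $\mathcal{B}((0,+\infty))\otimes\mathcal{B}(\mathbb{R}_{+})\otimes\mathcal{F}$-measurable real-valued functions, it is itself measurable, which is the first assertion.

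For the ``in particular'' statement I would fix $t\geq 0$ and note that the insertion map $(r,\omega)\mapsto(r,t,\omega)$ is measurable from $((0,+\infty)\times\Omega,\,\mathcal{B}((0,+\infty))\otimes\mathcal{F})$ into the triple product space, its three components being the identity, a constant, and the identity. Composing it with the jointly measurable map just constructed shows that the $t$-section $(r,\omega)\mapsto\beta^{r}_{t}(\omega)$ is $\mathcal{B}((0,+\infty))\otimes\mathcal{F}$-measurable.

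The only step that demands real care is the joint measurability of the canonical process $W$; everything else is composition of measurable maps together with the elementary fact that continuous functions between Euclidean spaces are Borel. I therefore expect no serious obstacle, only the need to state the dyadic-approximation argument cleanly.
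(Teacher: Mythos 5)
Your proof is correct and rests on the same key idea as the paper's: exploit measurability in $\omega$ together with continuity in the time parameters via a discretization-and-limit argument. The only organizational difference is that you apply the discretization to $W$ alone and then assemble $\beta^{r}_{t}(\omega)=W_{t}(\omega)-\frac{t}{r\vee t}W_{r\vee t}(\omega)$ by composition with the continuous maps $(r,t)\mapsto r\vee t$ and $(r,t)\mapsto t/(r\vee t)$, whereas the paper discretizes the pair $(r,t)$ directly in the full map $(r,t,\omega)\mapsto\beta^{r}_{t}(\omega)$; both routes are equally valid.
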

\begin{proof}
From the definition of $\beta^{r}$ we have that
(i) the map $\omega\rightarrow\beta_{t}^{r}(\omega)$ is measurable for all $r>0$ and $t\geq0$ and (ii) the map $(r,t)\rightarrow\beta_{t}^{r}(\omega)$
is continuous for all $\omega\in\Omega$. It now suffices to proceed with the discretization of the parameter $(r,t)$ in order to define piecewise constant and measurable
functions converging pointwise to $(r,t,\omega)\mapsto\beta_{t}^{r}(\omega)$
and to use standard results on the passage to the limit of sequences of measurable functions. The assertion of
the lemma then follows immediately.\end{proof}
\begin{cor}
\label{cor:measyBetaR} The map $(r,\omega)\rightarrow\beta_{\cdot}^{r}(\omega)$ of $((0,+\infty)\times\Omega,\,\mathcal{B}((0,+\infty))\otimes\mathcal{F})$
into $(C,\,\mathcal{C})$ is measurable.\end{cor}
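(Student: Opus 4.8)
The plan is to deduce the corollary from the $t$-section statement of the preceding lemma together with the way the $\sigma$-algebra $\mathcal{C}$ on $C$ was defined. Recall that $\mathcal{C}$ is, by construction, the $\sigma$-algebra generated by the canonical coordinate maps $\pi_{t}:C\to\mathbb{R}$, $\pi_{t}(w):=w(t)$, $t\geq0$. I would invoke the standard criterion that, for any measurable space $(X,\mathcal{A})$, a map $f:X\to C$ is $\mathcal{A}/\mathcal{C}$-measurable if and only if $\pi_{t}\circ f$ is $\mathcal{A}/\mathcal{B}(\mathbb{R})$-measurable for every $t\geq0$ --- this is immediate since the sets $f^{-1}(\pi_{t}^{-1}(B))$, $t\geq0$, $B\in\mathcal{B}(\mathbb{R})$, generate $f^{-1}(\mathcal{C})$.

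Next I would apply this with $X=(0,+\infty)\times\Omega$, $\mathcal{A}=\mathcal{B}((0,+\infty))\otimes\mathcal{F}$, and $f$ the map $(r,\omega)\mapsto\beta_{\cdot}^{r}(\omega)$. One first checks that $f$ does land in $C$: for fixed $r>0$ and $\omega\in\Omega$, the function $t\mapsto\beta_{t}^{r}(\omega)=W_{t}(\omega)-\frac{t}{r\vee t}W_{r\vee t}(\omega)$ is continuous, since $W(\omega)$ is continuous and $t\mapsto t/(r\vee t)$ and $t\mapsto r\vee t$ are continuous. Then, for each fixed $t\geq0$, the composition $\pi_{t}\circ f$ is exactly the $t$-section map $(r,\omega)\mapsto\beta_{t}^{r}(\omega)$, whose $\mathcal{B}((0,+\infty))\otimes\mathcal{F}$-measurability is the ``in particular'' assertion of the lemma. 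By the criterion just recalled, $f$ is therefore $\mathcal{B}((0,+\infty))\otimes\mathcal{F}/\mathcal{C}$-measurable, which is the claim.

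The argument carries no real analytic difficulty: the substance was already handled in the lemma via the discretization of the parameter $(r,t)$. The only points deserving a line of comment are the identification of $\mathcal{C}$ with the $\sigma$-algebra generated by the coordinate projections --- which is precisely how $(C,\mathcal{C})$ was introduced in the preliminaries --- and the elementary reduction of measurability into this projectively generated $\sigma$-algebra to coordinatewise measurability. Everything else is a direct appeal to the lemma.
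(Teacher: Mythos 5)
Your argument is correct and is exactly the route the paper intends: the paper gives no separate proof of the corollary, treating it as immediate from the $t$-section measurability in the preceding lemma together with the fact that $\mathcal{C}$ is generated by the coordinate projections (cf.\ the remark that $\beta^{r}$ is the $r$-section of the joint map). Your explicit verification that the paths lie in $C$ and your reduction to coordinatewise measurability fill in precisely the details the paper leaves tacit.
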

\begin{rem}
Note that $\beta^{r}$ is just as the $r$-section of the map
$(r,t,\omega)\rightarrow\beta_{t}^{r}(\omega)$.
\end{rem}
If the process $W$ is a Brownian motion on $(\Omega,\mathcal{F},\mathbf{P})$, the process $\beta^{r}$ is just a Brownian bridge which is identically
equal to 0 on the time interval $[r,\,+\infty)$. If we denote by $p(t,x,y),\, x\in\mathbb{R}$, the Gaussian density with variance $t$ and mean $y$, then the function $\varphi_{t}(r,\cdot)$ given by
\begin{equation}
\varphi_{t}\left(r,x\right):=\begin{cases}
p\left(\frac{t\left(r-t\right)}{r},x,0\right), & 0<t<r, \ x\in\mathbb{R},\\
0, & r\leq t, \ x\in\mathbb{R},
\end{cases} \label{eq:bbdensity}
\end{equation}
 is equal to the density of $\beta_{t}^{r}$ for $0<t<r$. Furthermore
we have the following properties:
\begin{enumerate}
\item[(i)] (Expectation) \ $\mathbf{E}\left[\beta_{t}^{r}\right]=0, \ t\geq0.$
\item[(ii)] (Covariance) \ $\mathbf{E}\left[\beta_{t}^{r}\beta_{s}^{r}\right]=s\wedge t\wedge r-\frac{(s\wedge r)(t\wedge r)}{r}, \ s,t\geq 0$.
\item[(iii)] (Conditional expectation) \  
For $t\in[0,r)$ and $t<u$ we have: If $g(\beta^r _u)$ is integrable, then
\begin{eqnarray}
\lefteqn{\mathbf{E}\left[g\left(\beta_{u}^{r}\right)|\beta_{t}^{r}=x\right]}\nonumber\\
&=&\begin{cases}
{\displaystyle \int_{\mathbb{R}}}g\left(y\right)p\left(\frac{r-u}{r-t}\left(u-t\right),\, y,\,\frac{r-u}{r-t}x\right)dy, & t<u<r, \ x\in\mathbb{R},\\
g\left(0\right), & r\leq u, \ x\in\mathbb{R}.
\end{cases}\label{eq:cond exp ext bb}
\end{eqnarray}
\item[(iv)] $\beta^{r}$ is a Markov process with respect to its completed natural filtration.
\item[(v)] \ $\beta_{t}^{r}=\beta_{t\wedge r}^{r}, \ t\geq0$.
\end{enumerate}
\begin{lem}[{\cite[(4.3.8)]{key-4}}]\label{BMr}
The process $B^{r}=(B_{t}^{r},\, t\geq0)$ given by
\begin{equation}
B_{t}^{r}:=\beta_{t}^{r}+\int_{0}^{t}\frac{\beta_{s}^{r}}{r-s}\, ds\label{eq:BMfrom_extBB}
\end{equation}
is a Brownian motion stopped at the (deterministic) stopping time
$r$, with respect to the completed natural filtration
of $\beta^{r}$.
\end{lem}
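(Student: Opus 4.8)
The plan is to reduce everything to covariance computations, exploiting that $\beta^{r}$ is a centered Gaussian process; write $\mathbb{G}=(\mathcal{G}_{t})_{t\ge 0}$ for the completed natural filtration of $\beta^{r}$. First one checks that $B^{r}$ is well defined and continuous. Since $\beta^{r}$ vanishes on $[r,+\infty)$ (by the definition of $\beta^{r}$, cf.\ (v)), the only delicate point is the integrability of $s\mapsto\beta_{s}^{r}/(r-s)$ near $s=r$; but property (ii) shows $\mathbf{E}|\beta_{s}^{r}|$ is a constant multiple of $\sqrt{s(r-s)/r}$, whence $\mathbf{E}\int_{0}^{r}|\beta_{s}^{r}|/(r-s)\,ds=\int_{0}^{r}\mathbf{E}|\beta_{s}^{r}|/(r-s)\,ds<\infty$. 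Hence $t\mapsto\int_{0}^{t}\beta_{s}^{r}/(r-s)\,ds$ is a.s.\ finite and continuous on all of $\mathbb{R}_{+}$, each $B_{t}^{r}$ is integrable and $\mathcal{G}_{t}$-measurable, and $B_{t}^{r}=B_{r}^{r}$ for $t\ge r$; so it suffices to study $B^{r}$ on $[0,r)$.

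Next I would establish the $\mathbb{G}$-martingale property. For $s\le v<r$, the Markov property (iv) and the case $g(y)=y$ of the conditional-expectation formula (iii) give $\mathbf{E}[\beta_{v}^{r}\mid\mathcal{G}_{s}]=\frac{r-v}{r-s}\,\beta_{s}^{r}$. Writing $B_{t}^{r}-B_{s}^{r}=(\beta_{t}^{r}-\beta_{s}^{r})+\int_{s}^{t}\beta_{v}^{r}/(r-v)\,dv$ and applying Fubini (licit by the integrability above), one obtains for $s<t<r$
\[
\mathbf{E}\bigl[\,B_{t}^{r}-B_{s}^{r}\mid\mathcal{G}_{s}\,\bigr]=\Bigl(\frac{r-t}{r-s}-1+\frac{t-s}{r-s}\Bigr)\beta_{s}^{r}=0 .
\]
A direct computation of $\mathbf{E}[(B_{t}^{r})^{2}]$ from (ii) shows that the logarithmic terms produced by the iterated integrals cancel and that $\mathbf{E}[(B_{t}^{r})^{2}]=t$ for $t<r$. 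Hence $(B_{t}^{r})_{0\le t<r}$ is an $L^{2}$-bounded continuous $\mathbb{G}$-martingale; it therefore extends continuously to $t=r$ (its limit being $B_{r}^{r}$), and since $B^{r}$ is constant on $[r,+\infty)$ it is a continuous $\mathbb{G}$-martingale on all of $\mathbb{R}_{+}$, with $\mathbf{E}[B_{s}^{r}B_{t}^{r}]=\mathbf{E}[(B_{s\wedge t}^{r})^{2}]=s\wedge t\wedge r$.

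It remains to identify $B^{r}$ as a stopped Brownian motion. The process $(B^{r},\beta^{r})$ is jointly Gaussian (a linear image of $\beta^{r}$), so it suffices to check, for $u\le s<t$, that $\mathbf{E}[(B_{t}^{r}-B_{s}^{r})\beta_{u}^{r}]=0$ — again a short computation from (ii) in which the logarithms cancel. Together with $\mathbf{E}[B_{t}^{r}-B_{s}^{r}]=0$ this makes $B_{t}^{r}-B_{s}^{r}$ independent of $\mathcal{G}_{s}$; since moreover $B^{r}$ is continuous, $B_{0}^{r}=0$, and $\mathbf{E}[B_{s}^{r}B_{t}^{r}]=s\wedge t\wedge r$, the process $B^{r}$ is a Brownian motion run on $[0,r]$ and held constant thereafter, i.e.\ a Brownian motion stopped at the deterministic time $r$, relative to $\mathbb{G}$. (Equivalently, one may invoke L\'{e}vy's characterisation: $B^{r}$ is a continuous $\mathbb{G}$-martingale, and since $\beta^{r}=B^{r}-\int_{0}^{\cdot}\beta_{s}^{r}/(r-s)\,ds$ differs from $B^{r}$ by a continuous finite-variation process, $\langle B^{r}\rangle=\langle\beta^{r}\rangle$; the independence of increments just established makes $(B_{t}^{r})^{2}-(t\wedge r)$ a $\mathbb{G}$-martingale, so $\langle B^{r}\rangle_{t}=t\wedge r$, and L\'{e}vy's theorem applied to the right-continuous augmentation of $\mathbb{G}$ concludes, the passage from $\mathcal{G}_{s}$ to $\mathcal{G}_{s+}$ being harmless since the increments are already independent of $\mathcal{G}_{s}$ and $B^{r}$ has continuous paths.)

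The step I expect to be the main obstacle is the bookkeeping in the second-moment and covariance identities: each of $\mathbf{E}[(B_{t}^{r})^{2}]$ and $\mathbf{E}[(B_{t}^{r}-B_{s}^{r})\beta_{u}^{r}]$ is a sum of iterated integrals of the bridge covariance $s\wedge v-sv/r$ against the kernels $1/(r-\cdot)$, and these produce terms in $t^{2}/r$ and $\log\frac{r}{r-t}$ that must cancel exactly; beyond that only minor care is needed at the endpoint $s=r$ (absolute convergence of the drift integral and $L^{2}$-closure of the martingale there) and in passing to a version of the filtration satisfying the usual conditions if one prefers the L\'{e}vy route.
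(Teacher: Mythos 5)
Your argument is correct. Note that the paper gives no proof of this lemma at all: it is quoted from Jeanblanc--Yor--Chesney \cite[(4.3.8)]{key-4}, so there is nothing in the text to compare against; what you have written is a self-contained verification along the standard lines. The individual steps check out: $\mathbf{E}\left|\beta^{r}_{s}\right|=\sqrt{2/\pi}\,\sqrt{s(r-s)/r}$ makes the drift integral absolutely convergent up to $s=r$; property (iii) with $g(y)=y$ together with the Markov property (iv) gives $\mathbf{E}[\beta^{r}_{v}\mid\mathcal{G}_{s}]=\frac{r-v}{r-s}\beta^{r}_{s}$, and the coefficients $\frac{r-t}{r-s}-1+\frac{t-s}{r-s}$ do sum to zero; and the second-moment computation indeed closes ($\mathbf{E}[(\beta^{r}_{t})^{2}]=t-t^{2}/r$, the cross term contributes $-2t(r-t)/r+2(r-t)\log\frac{r}{r-t}$, the double integral contributes $2t-t^{2}/r-2(r-t)\log\frac{r}{r-t}$, total $t$). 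Two small remarks. First, the covariance identity $\mathbf{E}[(B^{r}_{t}-B^{r}_{s})\beta^{r}_{u}]=0$ for $u\le s$ needs no separate integral bookkeeping: it follows at once from the martingale property you have already proved, by conditioning on $\mathcal{G}_{s}$. Second, with the paper's own Definition 7.2 of a Brownian motion stopped at $T$ (a continuous martingale with $\langle B,B\rangle_{t}=t\wedge T$), your observation that independent, centered increments with $\mathbf{E}[(B^{r}_{t}-B^{r}_{s})^{2}]=t-s$ make $(B^{r}_{t})^{2}-(t\wedge r)$ a $\mathbb{G}$-martingale already finishes the proof, so the L\'evy step and the passage to the right-continuous augmentation are not needed here.
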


Now we substitute the fixed time $r$ by a random time
$\tau$. Let $\tau$ be a strictly positive random variable on $(\Omega,\mathcal{F},\mathbf{P})$. The object which we are
interested in is given by the composition of the two mappings $(r,t,\omega)\mapsto\beta_{t}^{r}(\omega)$
and $(t,\omega)\mapsto(\tau(\omega),t,\omega)$. We have the following definition:
\begin{defn}
The map $\beta=(\beta_{t},\, t\geq0):\,(\Omega,\,\mathcal{F})\rightarrow(C,\,\mathcal{C})$
is defined by  
\begin{equation}
\beta_{t}\left(\omega\right):=\beta_{t}^{\tau\left(\omega\right)}\left(\omega\right),\quad (t,\omega)\in\mathbb{R}_+\times\Omega\,.\label{eq:beta def 0}
\end{equation}
\end{defn}
\begin{lem}
The map $\beta:\,(\Omega,\,\mathcal{F})\rightarrow(C,\,\mathcal{C})$
is measurable.
\end{lem}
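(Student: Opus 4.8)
The plan is to realize $\beta$ as a composition of two measurable maps and invoke Corollary \ref{cor:measyBetaR}. Define
\[
\Psi:\,(\Omega,\,\mathcal{F})\longrightarrow\bigl((0,+\infty)\times\Omega,\,\mathcal{B}((0,+\infty))\otimes\mathcal{F}\bigr),\qquad \Psi(\omega):=(\tau(\omega),\,\omega),
\]
which is well defined because $\tau$ is strictly positive, and
\[
\Phi:\,\bigl((0,+\infty)\times\Omega,\,\mathcal{B}((0,+\infty))\otimes\mathcal{F}\bigr)\longrightarrow(C,\,\mathcal{C}),\qquad \Phi(r,\omega):=\beta_{\cdot}^{r}(\omega).
\]
By the definition (\ref{eq:beta def 0}) of $\beta$ we have $\beta=\Phi\circ\Psi$, so it suffices to show that both $\Psi$ and $\Phi$ are measurable and then use the fact that a composition of measurable maps is measurable.

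First I would treat $\Phi$: it is precisely the map of Corollary \ref{cor:measyBetaR}, hence measurable. Next I would verify the measurability of $\Psi$. Since the product $\sigma$-algebra $\mathcal{B}((0,+\infty))\otimes\mathcal{F}$ is generated by the measurable rectangles $A\times B$ with $A\in\mathcal{B}((0,+\infty))$ and $B\in\mathcal{F}$, it is enough to check that $\Psi^{-1}(A\times B)\in\mathcal{F}$ for all such rectangles. But
\[
\Psi^{-1}(A\times B)=\{\omega\in\Omega:\tau(\omega)\in A\}\cap B=\tau^{-1}(A)\cap B,
\]
and $\tau^{-1}(A)\in\mathcal{F}$ because $\tau$ is a random variable, so $\Psi^{-1}(A\times B)\in\mathcal{F}$. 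Therefore $\Psi$ is measurable, and consequently $\beta=\Phi\circ\Psi$ is measurable as a map from $(\Omega,\,\mathcal{F})$ into $(C,\,\mathcal{C})$.

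There is no genuine obstacle here; the only point requiring a moment's care is the verification that $\Psi$ is measurable with respect to the product $\sigma$-algebra, which reduces, as above, to checking preimages of rectangles and using that $\tau$ is measurable. All the analytic content — continuity in $(r,t)$ and measurability in $\omega$ of $(r,t,\omega)\mapsto\beta_t^r(\omega)$, and the passage to the map into $(C,\mathcal{C})$ — has already been absorbed into Corollary \ref{cor:measyBetaR}.
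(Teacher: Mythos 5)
Your proposal is correct and follows exactly the paper's own argument: write $\beta$ as the composition of $\omega\mapsto(\tau(\omega),\omega)$ with the map $(r,\omega)\mapsto\beta^{r}_{\cdot}(\omega)$ of Corollary \ref{cor:measyBetaR}, and use that compositions of measurable maps are measurable. The only difference is that you spell out the rectangle argument for the measurability of $\Psi$, which the paper dismisses as holding ``by definition.''
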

\begin{proof}
The map $(r,\omega)\rightarrow\beta^{r}(\omega)$
of $((0,+\infty)\times\Omega,\,\mathcal{B}((0,+ \infty))\otimes\mathcal{F})$
into $(C,\,\mathcal{C})$ is measurable because of Corollary
\ref{cor:measyBetaR}. By definition, the map $\omega\rightarrow(\tau(\omega),\omega)$
of $(\Omega,\mathcal{F})$ into the measurable space $((0,+\infty)\times\Omega,\,\mathcal{B}((0,+\infty))\otimes\mathcal{F})$ is measurable. The statement of the lemma follows from the measurability of the composition of measurable functions. 
\end{proof}
We now consider the conditional law with respect to $\tau$. As common, by $\mathbf{P}_\tau$ we denote the distribution of $\tau$ with respect to $\mathbf{P}$.
\begin{lem}
\label{lem:If--is}If $\tau$ is independent of
the Brownian motion $W$, then for any measurable function $G$ on $\left(\left(0,+\infty\right)\times C,\mathcal{B}\left(\left(0,+\infty\right)\right)\otimes\mathcal{C}\right)$ such that $G(\tau,\, W)$ is integrable it follows that
\[
\mathbf{E}\left[G\left(\tau,\, W\right)|\sigma\left(\tau\right)\right]\left(\omega\right)=\left(\mathbf{E}\left[G\left(r,W\right)\right]\right)_{r=\tau\left(\omega\right)},\;\mathbf{P}\textrm{-a.s.}
\]
\end{lem}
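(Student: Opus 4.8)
The plan is to prove the identity first for ``product'' functions, where independence can be applied directly, and then to bootstrap to general integrable $G$ by a functional monotone class argument followed by a truncation. Before that, two preliminary observations are needed. Since $\tau$ is independent of $W$, the image law of $(\tau,W)$ on $\left((0,+\infty)\times C,\,\mathcal{B}((0,+\infty))\otimes\mathcal{C}\right)$ is the product measure $\mathbf{P}_\tau\otimes\mathbf{P}_W$, where $\mathbf{P}_W$ denotes the law of $W$ on $(C,\mathcal{C})$. By Tonelli's theorem the map $r\mapsto\mathbf{E}\left[|G(r,W)|\right]=\int_C|G(r,w)|\,\mathbf{P}_W(dw)$ is $\mathcal{B}((0,+\infty))$-measurable, and since $\mathbf{E}\left[\int_C|G(r,w)|\,\mathbf{P}_W(dw)\right]=\mathbf{E}\left[|G(\tau,W)|\right]<\infty$, it is $\mathbf{P}_\tau$-a.s.\ finite; hence $r\mapsto\mathbf{E}[G(r,W)]$ is well defined and measurable for $\mathbf{P}_\tau$-almost every $r$, so the right-hand side of the claimed identity makes sense.

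Next I would check the identity for $G(r,w)=f(r)g(w)$ with $f$ and $g$ bounded and measurable. Since $f(\tau)$ is $\sigma(\tau)$-measurable and $g(W)$ is independent of $\sigma(\tau)$, one has $\mathbf{E}\left[f(\tau)g(W)\,|\,\sigma(\tau)\right]=f(\tau)\,\mathbf{E}[g(W)]$; on the other hand $\left(\mathbf{E}[f(r)g(W)]\right)_{r=\tau}=\left(f(r)\,\mathbf{E}[g(W)]\right)_{r=\tau}=f(\tau)\,\mathbf{E}[g(W)]$, so the two sides coincide. Now let $\mathcal{H}$ be the set of bounded measurable $G$ on the product space for which the identity holds. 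It is a vector space containing the constants, it is stable under bounded monotone pointwise limits (use conditional monotone convergence on the left-hand side and ordinary monotone convergence, pointwise in $r$, on the right-hand side), and it contains the multiplicative class $\{(r,w)\mapsto f(r)g(w)\}$, which generates $\mathcal{B}((0,+\infty))\otimes\mathcal{C}$. By the functional monotone class theorem, $\mathcal{H}$ contains every bounded $\mathcal{B}((0,+\infty))\otimes\mathcal{C}$-measurable function.

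Finally I would remove the boundedness assumption. Writing $G=G^{+}-G^{-}$, applying the bounded case to $G^{\pm}\wedge n$ and letting $n\to\infty$, conditional monotone convergence gives $\mathbf{E}\left[G^{\pm}(\tau,W)\,|\,\sigma(\tau)\right]=\left(\mathbf{E}[G^{\pm}(r,W)]\right)_{r=\tau}$, the right-hand sides being a priori $[0,+\infty]$-valued; by the preliminary finiteness observation both are $\mathbf{P}$-a.s.\ finite when $G(\tau,W)$ is integrable, so subtracting the two identities yields the assertion. The only real bookkeeping is in this last step — keeping track of the a.s.-finiteness and of the null sets on which the substitution $r=\tau(\omega)$ is carried out — since the heart of the matter, the product-function case, is an immediate consequence of independence; I do not expect any genuine obstacle.
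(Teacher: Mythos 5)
Your argument is correct. Note that the paper does not actually prove this lemma: it simply cites Shiryaev, \emph{Probability}, Ch.~II.7, for what is a standard fact about conditioning on an independent $\sigma$-algebra. Your proof is the standard self-contained route to that fact --- identify the joint law of $(\tau,W)$ as the product measure $\mathbf{P}_\tau\otimes\mathbf{P}_W$, verify the identity on the multiplicative class of functions $f(r)g(w)$ where independence applies directly, extend to all bounded measurable $G$ by the functional monotone class theorem, and then remove boundedness by truncating $G^{\pm}$ and using (conditional) monotone convergence together with the Tonelli-based finiteness of $r\mapsto\mathbf{E}\left[\left|G(r,W)\right|\right]$ for $\mathbf{P}_\tau$-a.e.\ $r$. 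All the steps you outline go through without obstacle, and your attention to the measurability and a.s.-finiteness of the right-hand side is exactly the bookkeeping the cited reference performs; so your proposal is a faithful replacement for the omitted proof rather than a genuinely different method.
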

\begin{proof} See, e.g., \cite[Ch. II.7, p. 221]{key-8}.
\end{proof}
\begin{cor}
\label{cor:LEM if--is} If $h:\,\left((0,+\infty)\times C,\mathcal{B}\left(\left(0,+\infty\right)\right)\otimes\mathcal{C}\right)\mapsto\left(\mathbb{R},\mathcal{B}(\mathbb{R})\right)$
is a measurable function such that $\mathbf{E}\left[|h\left(\tau,\,\beta\right)|\right]<+\infty$,
then 
\[
\mathbf{E}\left[h\left(\tau,\,\beta\right)|\tau=r\right]=\mathbf{E}\left[h\left(r,\,\beta^{r}\right)\right],\; \mathbf{P}_{\tau}\textrm{-a.s.},
\]
or, equivalently,
\[
\mathbf{E}\left[h\left(\tau,\,\beta\right)|\sigma\left(\tau\right)\right]\left(\omega\right)=\left(\mathbf{E}\left[h\left(r,\,\beta^{r}\right)\right]\right)_{r=\tau\left(\omega\right)},\; \mathbf{P}\textrm{-a.s.}
\]
\end{cor}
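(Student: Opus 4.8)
The plan is to deduce Corollary~\ref{cor:LEM if--is} from Lemma~\ref{lem:If--is} by choosing the right function $G$. The key observation is that the information process $\beta$ is, by construction, a deterministic measurable functional of the pair $(\tau,W)$: indeed, by the Definition of $\beta$ and Corollary~\ref{cor:measyBetaR}, the map
\[
\Phi:\;(r,\omega)\longmapsto \beta_\cdot^{r}(\omega)
\]
is $\mathcal{B}((0,+\infty))\otimes\mathcal{F}/\mathcal{C}$-measurable, and $\beta(\omega)=\Phi(\tau(\omega),\omega)$. Since $W$ itself is the identity-type map generating $\mathcal{F}$ in the relevant sense (or, more carefully, since $\beta^{r}$ depends on $\omega$ only through $W(\omega)$), we may in fact write $\beta_\cdot^{r}(\omega)=\Psi(r,W(\omega))$ for a measurable $\Psi:((0,+\infty)\times C,\mathcal{B}((0,+\infty))\otimes\mathcal{C})\to(C,\mathcal{C})$; this is the content of the measurability lemmas applied with the canonical-process structure on $C$. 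Consequently $\beta=\Psi(\tau,W)$.

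With this in hand, first I would fix the measurable function $h$ on $((0,+\infty)\times C,\mathcal{B}((0,+\infty))\otimes\mathcal{C})$ with $\mathbf{E}[|h(\tau,\beta)|]<+\infty$, and define
\[
G(r,w):=h\bigl(r,\Psi(r,w)\bigr),\qquad (r,w)\in(0,+\infty)\times C.
\]
Then $G$ is measurable as a composition of measurable maps, and $G(\tau,W)=h(\tau,\Psi(\tau,W))=h(\tau,\beta)$, which is integrable by hypothesis. Since $\tau$ is independent of $W$ (this is the standing assumption of the Corollary, inherited from Lemma~\ref{lem:If--is}), Lemma~\ref{lem:If--is} applies and yields
\[
\mathbf{E}\bigl[h(\tau,\beta)\,\big|\,\sigma(\tau)\bigr](\omega)
=\mathbf{E}\bigl[G(\tau,W)\,\big|\,\sigma(\tau)\bigr](\omega)
=\bigl(\mathbf{E}[G(r,W)]\bigr)_{r=\tau(\omega)},\quad\mathbf{P}\text{-a.s.}
\]
Finally I would identify $\mathbf{E}[G(r,W)]=\mathbf{E}[h(r,\Psi(r,W))]=\mathbf{E}[h(r,\beta^{r})]$, using that $\beta^{r}=\Psi(r,W)$ by construction; this gives the second displayed identity of the Corollary. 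The first displayed identity (the version with $\mathbf{P}_\tau$-a.s.\ conditioning on $\{\tau=r\}$) is just the disintegrated restatement of the second, obtained from the general relationship between $\mathbf{E}[\,\cdot\mid\sigma(\tau)]$ and the regular conditional law $\mathbf{E}[\,\cdot\mid\tau=r]$ under the distribution $\mathbf{P}_\tau$.

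The only genuinely delicate point is the passage from ``$\beta^{r}$ is a measurable functional of $\omega$'' to ``$\beta^{r}$ is a measurable functional of $W(\omega)$'', i.e.\ the factorization $\beta_\cdot^{r}=\Psi(r,W)$ through the path space $C$. This is where one must invoke that $\beta^{r}_t(\omega)=W_t(\omega)-\frac{t}{r\vee t}W_{r\vee t}(\omega)$ is visibly a measurable (indeed continuous) function of the path $W(\omega)\in C$ for each $r$, together with the joint measurability in $(r,\omega)$ from Lemma~2.1/Corollary~\ref{cor:measyBetaR}; a Doob--Dynkin type argument then furnishes the measurable $\Psi$ on $(0,+\infty)\times C$. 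Once this factorization is granted, everything else is a bookkeeping application of Lemma~\ref{lem:If--is}, and I do not expect any further obstacle.
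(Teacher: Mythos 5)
Your proposal is correct and is essentially the argument the paper intends: the Corollary is stated without proof as an immediate consequence of Lemma \ref{lem:If--is}, the implicit step being precisely the factorization $\beta^{r}=\Psi(r,W)$ through path space (which here is even explicit, $\Psi(r,w)_t=w(t)-\tfrac{t}{r\vee t}w(r\vee t)$, so no Doob--Dynkin argument is needed) followed by the substitution $G(r,w)=h(r,\Psi(r,w))$. Your identification of this factorization as the only delicate point, and your handling of it, match the paper's route.
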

The last two formulas provide a useful connection between the law of Brownian bridges and the conditional law with respect to $\sigma(\tau)$ of a generic functional involving $\tau$, the Brownian motion $W$ and the map $\beta$ defined in (\ref{eq:beta def 0}).

\section{\label{sec:Definition-and-First} Definition and First Properties} 
\noindent Let $W=(W_{t},\, t\geq0)$ be a standard Brownian motion (with respect to its completed natural filtration $\mathbb{F}^{W}=(\mathcal{F}_{t}^{W})_{t\geq0}$)
starting from 0. Let $\tau:\,\Omega\rightarrow(0,\infty)$
be a strictly positive random time with distribution function denoted
by $F$: $F(t):=\mathbf{P}(\tau\leq t),\: t\geq0$. In
this paper the following assumption will always be made.
\begin{assumption}
\label{ass:assumption1}The random time $\tau$ and the Brownian motion
$W$ are independent.
\end{assumption}
The random time $\tau$ (which will be interpreted as \textit{default
time}) is supposed to be not an $\mathbb{F}^{W}$-stopping time. This
means that the case in which $\tau$ is equal to a
positive constant $T$ will not be considered.
\begin{defn}
\label{def:obs by MonPon}The process $\beta=(\beta_{t},\;t\geq0)$
given by
\begin{equation}
\beta_{t}:=W_{t}-\frac{t}{\tau\vee t}W_{\tau\vee t},\; t\geq0,\label{eq:betaDEF1}
\end{equation}
 will be called \textit{information process}.\end{defn}
In what follows, we shall use the following filtrations:
\begin{enumerate}
\item[(i)] $\mathbb{F}^{0}=(\mathcal{F}_{t}^{0})_{t\geq0}$, the natural
filtration of the process $\beta$:\\ $\mathcal{F}_{t}^{0}:=\sigma(\beta_{s},\,0\leq s\leq t),\ t\geq0$.
\item[(ii)] $\mathbb{F}^{P}=(\mathcal{F}_{t}^{P})_{t\geq0}$, the completed natural filtration:\\
$\mathcal{F}_{t}^{P}:=\mathcal{F}_{t}^{0}\vee\mathcal{N}_{P},\, t\geq0$.
\item[(iii)] $\mathbb{F}^{\beta}=(\mathcal{F}_{t}^{\beta})_{t\geq0}$,
the smallest filtration containing $\mathbb{F}^{0}$ and satisfying
the usual hypotheses of right-continuity and completeness:
\[
\mathcal{F}_{t}^{\beta}:=\sigma\left(\bigcap_{u>t}\mathcal{F}_{u}^{0}\cup\mathcal{N}_{P}\right)=\mathcal{F}_{t+}^{0}\vee\mathcal{N}_{P},\: t\geq0.
\]
\end{enumerate}
The aim of this section is to prove that the default time $\tau$ is a stopping time with respect to the completed natural filtration $\mathbb{F}^P$ and that the information process $\beta$ is Markov with respect to $\mathbb{F}^P$. 

Although it is possible to prove directly that $\tau$ is an $\mathbb{F}^{\beta}$-stopping time (see \cite[Lemma 2.5]{key-10}) we point out the following result which better involves the role of the probability measure $\mathbf{P}$. For two sets $A, B\in\mathcal{F}$ we shall write $A\subseteq B \ \; \mathbf{P}$-a.s. if $\mathbf{P}(B\setminus A)=0$. If $A\subseteq B \ \; \mathbf{P}$-a.s. and $B\subseteq A \ \; \mathbf{P}$-a.s., then we write $A=B \ \; \mathbf{P}$-a.s.
\begin{prop}
\label{lem:For-all-,}For all $t>0$, $\{ \beta_{t}=0\} =\{ \tau\leq t\},\ \mathbf{P}$-a.s.
In particular, $\tau$ is a stopping time with respect to the filtration
$\mathbb{F}^{P}$.
\end{prop}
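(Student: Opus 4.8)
The plan is to prove the set identity $\{\beta_{t}=0\}=\{\tau\leq t\}$ up to a $\mathbf{P}$-null set by establishing the two inclusions separately, and then to read off the stopping-time property from the completeness of $\mathbb{F}^{P}$. The inclusion $\{\tau\leq t\}\subseteq\{\beta_{t}=0\}$ holds \emph{surely} and requires no null set: on the event $\{\tau\leq t\}$ one has $\tau\vee t=t$, so directly from (\ref{eq:betaDEF1}), $\beta_{t}=W_{t}-\frac{t}{t}W_{t}=0$ (this is also immediate from property (v) above, since $\beta_{t}=\beta_{t\wedge\tau}^{\tau}$ and $\beta_{r}^{r}=0$, so $\beta_{t}=0$ once $t\geq\tau$).

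The reverse inclusion $\{\beta_{t}=0\}\subseteq\{\tau\leq t\}$ $\mathbf{P}$-a.s.\ is the substantive step; equivalently I must show $\mathbf{P}(\beta_{t}=0,\,\tau>t)=0$. I would apply Corollary \ref{cor:LEM if--is} to the bounded (hence integrable) jointly $\mathcal{B}((0,+\infty))\otimes\mathcal{C}$-measurable functional $h(r,w):=\mathbb{I}_{\{r>t\}}\,\mathbb{I}_{\{w(t)=0\}}$ on $(0,+\infty)\times C$, for which $\mathbf{E}[h(r,\beta^{r})]=\mathbb{I}_{\{r>t\}}\,\mathbf{P}(\beta_{t}^{r}=0)$. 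Taking expectations in the conditional-law identity of the corollary then gives
\[
\mathbf{P}(\beta_{t}=0,\,\tau>t)=\mathbf{E}\big[\mathbf{E}[h(\tau,\beta)\mid\sigma(\tau)]\big]=\int_{(0,+\infty)}\mathbb{I}_{\{r>t\}}\,\mathbf{P}(\beta_{t}^{r}=0)\,\mathbf{P}_{\tau}(dr).
\]
For every $r>t$ the random variable $\beta_{t}^{r}$ has the Gaussian density $\varphi_{t}(r,\cdot)$ of (\ref{eq:bbdensity}), with strictly positive variance $t(r-t)/r$ (cf.\ property (ii)), hence $\mathbf{P}(\beta_{t}^{r}=0)=0$; the integrand therefore vanishes $\mathbf{P}_{\tau}$-a.e.\ and the integral is $0$. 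Combining the two inclusions yields $\{\beta_{t}=0\}=\{\tau\leq t\}$ $\mathbf{P}$-a.s.

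For the stopping-time assertion: for $t>0$ the set $\{\beta_{t}=0\}$ lies in $\mathcal{F}_{t}^{0}\subseteq\mathcal{F}_{t}^{P}$ because $\beta_{t}$ is $\mathcal{F}_{t}^{0}$-measurable; since $\{\tau\leq t\}$ differs from $\{\beta_{t}=0\}$ by a $\mathbf{P}$-null set and $\mathcal{F}_{t}^{P}$ contains $\mathcal{N}_{P}$, it follows that $\{\tau\leq t\}\in\mathcal{F}_{t}^{P}$; for $t=0$, $\{\tau\leq0\}=\emptyset\in\mathcal{F}_{0}^{P}$ since $\tau>0$. Hence $\tau$ is an $\mathbb{F}^{P}$-stopping time. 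I expect the only point needing a bit of care to be the bookkeeping in invoking Corollary \ref{cor:LEM if--is} (checking joint measurability and integrability of $h$, which is trivial once $h$ is bounded), together with the observation that $r=t$ is harmless because the indicator $\mathbb{I}_{\{r>t\}}$ excludes it; the no-atom property of a non-degenerate Gaussian and the completeness argument for the stopping-time part are entirely routine.
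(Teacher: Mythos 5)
Your proof is correct and follows essentially the same route as the paper: the inclusion $\{\tau\leq t\}\subseteq\{\beta_{t}=0\}$ is immediate from the definition, and the reverse inclusion is obtained by conditioning on $\tau$ via Corollary \ref{cor:LEM if--is} and using that $\beta_{t}^{r}$ is a nondegenerate Gaussian (hence atomless) for $t<r$, with the stopping-time claim then following from completeness of $\mathbb{F}^{P}$. Your only addition is to spell out the explicit functional $h(r,w)=\mathbb{I}_{\{r>t\}}\mathbb{I}_{\{w(t)=0\}}$ to which the corollary is applied, which the paper leaves implicit.
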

\begin{proof}
Using the formula of total probability and Corollary
\ref{cor:LEM if--is}, we have that 
\begin{align*}
\mathbf{P}\left(\beta_{t}=0,\,\tau>t\right) &=\int_{\left(t,+\infty\right)}\mathbf{P}\left(\beta_{t}=0|\tau=r\right)dF(r)\\
&=\int_{\left(t,+\infty\right)}\mathbf{P}\left(\beta_{t}^{r}=0\right)dF\left(r\right)=0\,,
\end{align*}
where the latter equality holds because the random variable $\beta_{t}^{r}$ is nondegenerate and normally distributed for $0<t<r$ and, therefore, we obtain $\mathbf{P}(\beta_{t}^{r}=0)=0,\,0<t<r$.
Hence, for all $t>0$, $\{ \beta_{t}=0\} \cap\{ \tau>t\} \in\mathcal{N}_{P}$ and, consequently, $\{ \beta_{t}=0\} \subseteq\{ \tau\leq t\} ,\,\mathbf{P}$-a.s.
In view of $\{ \tau\leq t\} \subseteq\{ \beta_{t}=0\}$,
this yields the first part of the statement: $\{ \tau\leq t\} =\{\beta_{t}=0\} ,\,\mathbf{P}$-a.s.
Since $\{ \beta_{t}=0\} \in\mathcal{F}_{t}^{0}$, the event
$\{ \tau\leq t\}$ belongs to $\mathcal{F}_{t}^{0}\vee\mathcal{N}_{P}$,
for all $t\geq 0$. Hence $\tau$ is a stopping
time with respect to $\mathbb{F}^{P}$.
\end{proof}
\begin{cor}
$\tau$ is a stopping time with respect to the filtration $\mathbb{F}^{\beta}$.
\end{cor}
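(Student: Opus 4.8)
The plan is to deduce this immediately from Proposition \ref{lem:For-all-,} together with the elementary observation that enlarging a filtration preserves the stopping-time property. Concretely, I would first record the inclusion $\mathcal{F}_{t}^{P}\subseteq\mathcal{F}_{t}^{\beta}$ for every $t\geq0$: indeed $\mathcal{F}_{t}^{0}\subseteq\mathcal{F}_{t+}^{0}$ by monotonicity of $\mathbb{F}^{0}$, and appending the $\mathbf{P}$-null sets $\mathcal{N}_{P}$ to both sides gives $\mathcal{F}_{t}^{P}=\mathcal{F}_{t}^{0}\vee\mathcal{N}_{P}\subseteq\mathcal{F}_{t+}^{0}\vee\mathcal{N}_{P}=\mathcal{F}_{t}^{\beta}$, exactly as in the definition of the two filtrations in Section \ref{sec:Definition-and-First}.

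Next I would invoke Proposition \ref{lem:For-all-,}, which already asserts that $\tau$ is an $\mathbb{F}^{P}$-stopping time, i.e.\ $\{\tau\leq t\}\in\mathcal{F}_{t}^{P}$ for all $t\geq0$. Combining this with the inclusion above yields $\{\tau\leq t\}\in\mathcal{F}_{t}^{\beta}$ for all $t\geq0$, which is precisely the statement that $\tau$ is a stopping time with respect to $\mathbb{F}^{\beta}$.

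There is essentially no obstacle here: the corollary is a one-line consequence of Proposition \ref{lem:For-all-,} and the fact that $\mathbb{F}^{\beta}$ is, at each time, at least as large as $\mathbb{F}^{P}$. The only point worth spelling out — and it is routine — is that adjoining $\mathcal{N}_{P}$ respects the inclusion $\mathcal{F}_{t}^{0}\subseteq\mathcal{F}_{t+}^{0}$, so that no appeal to right-continuity of $\mathbb{F}^{\beta}$ (beyond its definition) is actually needed.
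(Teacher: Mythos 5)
Your proof is correct and is exactly the argument the paper intends: the corollary is left without proof there precisely because $\mathcal{F}_{t}^{P}\subseteq\mathcal{F}_{t}^{\beta}$ makes it an immediate consequence of Proposition \ref{lem:For-all-,}. Nothing further is needed.
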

The above proposition states that the process $(\mathbb{I}_{\{ \tau\leq t\} },\, t>0)$ is a modification, under the probability measure $\mathbf{P}$, of the process $(\mathbb{I}_{\{ \beta_{t}=0\} },\, t>0)$. On the other hand, these both processes are not indistinguishable, since the process $\beta$ can hit 0 before $\tau$ (in fact, due to the law of iterated logarithm, this happens
uncountably many times $\mathbf{P}$-a.s.). 

Roughly speaking we can say that in general, if we can observe only
$\beta_{t}$, we are sure that $\tau>t$ whenever $\beta_{t}\neq0$.
But, if at time $t$ we observe the event $\{ \beta_{t}=0\}$,
the information carried by $\mathcal{F}_{t}^{0}$ may be not sufficient
to know whether the event $\{ \tau\le t\}$ has occurred
or not, we can only say that it occurred $\mathbf{P}$-a.s. 

We now show that the information process $\beta$ is a Markov process with
respect to its natural filtration $\mathbb{F}^{0}$. 
\begin{thm}
\label{TEO:The-information-process} The information process $\beta$
is an $\mathbb{F}^{0}$-Markov process:
\begin{equation}
\mathbf{E}\left[f\left(\beta_{t+h}\right)|\mathcal{F}_{t}^{0}\right]=\mathbf{E}\left[f\left(\beta_{t+h}\right)|\beta_{t}\right],\quad\mathbf{P}\textrm{-a.s., }t\geq0,\label{eq:statementMK1}
\end{equation}
for all $h\geq0$ and for every measurable function $f$ which is nonnegative or such that $f(\beta_{t+h})$
is integrable.
\end{thm}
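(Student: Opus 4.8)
The plan is to show that $\mathbf{E}[f(\beta_{t+h})\,|\,\mathcal{F}^{0}_{t}]$ admits a $\sigma(\beta_{t})$‑measurable version; since $\sigma(\beta_{t})\subseteq\mathcal{F}^{0}_{t}$, the equality (\ref{eq:statementMK1}) then follows by taking conditional expectations with respect to $\sigma(\beta_{t})$ on both sides. The cases $h=0$ (both sides are $f(\beta_{t})$) and $t=0$ ($\mathcal{F}^{0}_{0}=\sigma(\beta_{0})$ is $\mathbf{P}$‑trivial) are immediate, and by truncating $f$ and splitting it into positive and negative parts one may assume $f$ bounded. Since the cylinder events $\{(\beta_{t_{1}},\dots,\beta_{t_{n}})\in B\}$ with $0\le t_{1}<\dots<t_{n}=t$ form a $\pi$‑system generating $\mathcal{F}^{0}_{t}$, a functional monotone‑class argument reduces the statement to producing a bounded measurable $H:\mathbb{R}\to\mathbb{R}$ with
\begin{equation}
\mathbf{E}\bigl[g(\beta_{t_{1}},\dots,\beta_{t_{n}})\,f(\beta_{t+h})\bigr]=\mathbf{E}\bigl[g(\beta_{t_{1}},\dots,\beta_{t_{n}})\,H(\beta_{t})\bigr]\label{eq:planMK}
\end{equation}
for all $n$, all $0\le t_{1}<\dots<t_{n}=t$, and all bounded measurable $g:\mathbb{R}^{n}\to\mathbb{R}$; then $H(\beta_{t})$ is a version of $\mathbf{E}[f(\beta_{t+h})\,|\,\mathcal{F}^{0}_{t}]$.

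Next I would condition on $\tau$. By Corollary \ref{cor:LEM if--is}, applied to the functional $(r,\omega)\mapsto g(\omega_{t_{1}},\dots,\omega_{t_{n}})f(\omega_{t+h})$, the left‑hand side of (\ref{eq:planMK}) equals $\int_{(0,\infty)}\mathbf{E}[g(\beta^{r}_{t_{1}},\dots,\beta^{r}_{t_{n}})f(\beta^{r}_{t+h})]\,dF(r)$, which I split over $(0,t]$ and $(t,\infty)$. On $(0,t]$, property (v) gives $\beta^{r}_{t+h}=\beta^{r}_{r}=0$ and $\beta^{r}_{t_{i}}=\beta^{r}_{t_{i}\wedge r}$, so the integrand equals $f(0)\,\mathbf{E}[g(\beta^{r}_{t_{1}\wedge r},\dots,\beta^{r}_{t_{n}\wedge r})]$. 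On $(t,\infty)$, $\beta^{r}$ is a genuine Brownian bridge, Markov with respect to its completed natural filtration (property (iv)), and by (\ref{eq:cond exp ext bb}) with $u=t+h$ one has $\mathbf{E}[f(\beta^{r}_{t+h})\,|\,\beta^{r}_{t}=x]=\Psi(r,x)$ for an explicit kernel $\Psi$ (set $\Psi(r,\cdot):=f(0)$ for $r\le t$, consistently with $\beta^{r}_{t+h}=0$ there); hence the integrand becomes $\mathbf{E}[g(\beta^{r}_{t_{1}},\dots,\beta^{r}_{t_{n}})\,\Psi(r,\beta^{r}_{t})]$.

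The decisive step is the integration in $r$. For $r>t$ the restriction of $\beta^{r}$ to $[0,t]$ has the law of Brownian motion on $[0,t]$ conditioned to vanish at time $r$, so the density of $(\beta^{r}_{t_{1}},\dots,\beta^{r}_{t_{n}})$ factorizes as $\prod_{i=1}^{n}p(t_{i}-t_{i-1},x_{i},x_{i-1})\cdot\frac{p(r-t,x_{n},0)}{p(r,0,0)}$ (with $t_{0}=0$, $x_{0}=0$); consistently, this last factor times $p(t,x_{n},0)$ is exactly the density $\varphi_{t}(r,x_{n})$ of $\beta^{r}_{t}$ from (\ref{eq:bbdensity}). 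Integrating in $r$ first, the $(t,\infty)$‑part of the left‑hand side of (\ref{eq:planMK}) becomes the integral over $\mathbb{R}^{n}$ of $g(x_{1},\dots,x_{n})\bigl(\int_{(t,\infty)}\Psi(r,x_{n})\frac{p(r-t,x_{n},0)}{p(r,0,0)}\,dF(r)\bigr)\prod_{i=1}^{n}p(t_{i}-t_{i-1},x_{i},x_{i-1})$, and the bracketed factor depends on $(x_{1},\dots,x_{n})$ only through $x_{n}$. Setting $H(x):=\left(\int_{(t,\infty)}\Psi(r,x)\frac{p(r-t,x,0)}{p(r,0,0)}\,dF(r)\right)\Big/\left(\int_{(t,\infty)}\frac{p(r-t,x,0)}{p(r,0,0)}\,dF(r)\right)$ for $x\neq0$ (and $H:=f(0)$ elsewhere, in particular if $\tau\le t$ $\mathbf{P}$‑a.s.), the same two steps — Corollary \ref{cor:LEM if--is} and the bridge densities — applied to $\mathbf{E}[g(\beta_{t_{1}},\dots,\beta_{t_{n}})H(\beta_{t})]$ reproduce this $(t,\infty)$‑part exactly, while the $(0,t]$‑parts agree because there $\beta^{r}_{t}=0$ and $H(0)=f(0)$. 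This proves (\ref{eq:planMK}) and hence the theorem.

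I expect the mixing in $r$ to be the main obstacle. A priori $\mathcal{F}^{0}_{t}$ carries far more information about $\tau$ than $\beta_{t}$ does — on $\{\beta_{t}=0\}$ it even determines $\tau$ $\mathbf{P}$‑a.s. — and the bridge kernel $\Psi(r,\cdot)$ genuinely depends on $r$; what makes the Markov property hold is precisely that the $r$‑dependent factor $\frac{p(r-t,x_{n},0)}{p(r,0,0)}$ in the finite‑dimensional bridge density is attached to the single coordinate $x_{n}$, i.e. that the $[0,t]$‑restriction of $\beta^{r}$ is absolutely continuous with respect to Brownian motion on $[0,t]$ with a Radon--Nikodym density that is a function of $\beta_{t}$ only. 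On $\{\beta_{t}=0\}$ one does not even need this: by Proposition \ref{lem:For-all-,}, $\tau\le t$ $\mathbf{P}$‑a.s. there, so by property (v) $\beta_{t+h}=0$ $\mathbf{P}$‑a.s. on that event and $f(\beta_{t+h})=f(0)=H(0)$. The remaining ingredients — the monotone‑class reductions, Corollary \ref{cor:LEM if--is}, and the Gaussian algebra behind the density factorization — are routine.
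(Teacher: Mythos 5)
Your proof is correct, but it runs on a different engine than the paper's. The paper never writes down finite-dimensional densities: it generates $\mathcal{F}^{0}_{t}$ by $\beta_{t}$ together with the increments $\xi_{k}=\beta_{t_{k}}/t_{k}-\beta_{t_{k-1}}/t_{k-1}$, observes that on $\{\tau>t\}$ these coincide with the Brownian functionals $\eta_{k}=W_{t_{k}}/t_{k}-W_{t_{k-1}}/t_{k-1}$, and checks by a Gaussian covariance computation that $(\eta_{1},\dots,\eta_{n})$ is independent of $(\beta^{r}_{t},\beta^{r}_{t+h})$ for every $r>t$ simultaneously; this uniform-in-$r$ independence survives the disintegration over $\tau$ and yields the needed factorization of $\mathbf{E}\left[H(\beta_{t+h},\beta_{t})\,\mathbb{I}_{\{\tau>t\}}L(\eta_{1},\dots,\eta_{n})\right]$ directly, with no kernel ever computed. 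You instead use the absolute continuity of the bridge law on $[0,t]$ with respect to Wiener measure, whose Radon--Nikodym density $p(r-t,x_{n},0)/p(r,0,0)$ depends on the path only through the endpoint $x_{n}=\beta_{t}$, so that after integrating in $r$ the $\tau$-dependence stays attached to the single coordinate $\beta_{t}$. Both arguments exploit the same structural fact --- conditionally on $\beta_{t}$, the law of $\beta^{r}$ restricted to $[0,t]$ does not depend on $r$ --- but the paper's version is coordinate-free and density-free, while yours is more computational and has the bonus of producing the explicit transition kernel, namely $H(x)=\int_{(t,\infty)}\Psi(r,x)\varphi_{t}(r,x)\,dF(r)\big/\int_{(t,\infty)}\varphi_{t}(r,x)\,dF(r)$ for $x\neq0$, which is precisely the Bayes-formula expression the paper only derives later, in Sections 4 and 5. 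Your treatment of the boundary cases is also right: setting $H(0)=f(0)$ is forced by $\{\beta_{t}=0\}=\{\tau\leq t\}$ $\mathbf{P}$-a.s. and is what makes the $(0,t]$-part of the disintegration match, and the degenerate case $F((t,\infty))=0$ is correctly absorbed into that convention.
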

\begin{proof}
For $t=0$ the statement is clear. Let us assume $t>0$. On the set
$\{ \tau\leq t\}$ we have $\mathbf{P}$-a.s.
\begin{equation*}
\mathbf{E}\left[f\left(\beta_{t+h}\right)|\mathcal{F}_{t}^{0}\right]\mathbb{I}_{\left\{ \tau\leq t\right\} } =\mathbf{E}\left[f\left(0\right)\mathbb{I}_{\left\{ \tau\leq t\right\} }|\mathcal{F}_{t}^{0}\right]
  =f\left(0\right)\mathbb{I}_{\left\{ \tau\leq t\right\}}
  =f\left(0\right)\mathbb{I}_{\left\{ \beta_{t}=0\right\}}\,, 
\end{equation*}
which is a measurable function with respect to $\sigma(\beta_{t})$ and hence (\ref{eq:statementMK1}) is valid on $\{ \tau\leq t\}$ $\mathbf{P}$-a.s. Now we have to prove 
\[
\mathbf{E}\left[f\left(\beta_{t+h}\right)|\mathcal{F}_{t}^{0}\right]\mathbb{I}_{\left\{ \tau>t\right\} }=\mathbf{E}\left[f\left(\beta_{t+h}\right)|\beta_{t}\right]\mathbb{I}_{\left\{ \tau>t\right\} },\;\mathbf{P}\textrm{-a.s.}
\]
Both sides being $\mathcal{F}_{t}^{P}$-measurable, it suffices to
verify that for all $A\in\mathcal{F}_{t}^{0}$ we have 
\begin{equation}
\int_{A\cap\left\{ \tau>t\right\} }f\left(\beta_{t+h}\right)d\mathbf{P}=\int_{A\cap\left\{ \tau>t\right\} }\mathbf{E}\left[f\left(\beta_{t+h}\right)|\beta_{t}\right]d\mathbf{P}.\label{eq:equivalentMK1}
\end{equation}
We observe that $\mathcal{F}_{t}^{0}$ is generated by 
\[
\beta_{t_{n}},\ \xi_n:=\frac{\beta_{t_{n}}}{t_{n}}-\frac{\beta_{t_{n-1}}}{t_{n-1}},\ \xi_{n-1}:=\frac{\beta_{t_{n-1}}}{t_{n-1}}-\frac{\beta_{t_{n-2}}}{t_{n-2}},\ ..., \ \xi_1:=\frac{\beta_{t_{1}}}{t_{1}}-\frac{\beta_{t_{0}}}{t_{0}}\,, 
\]
$0<t_{0}<t_{1}<...<t_{n}=t$, for $n$ running through $\mathbb{N}$. By the monotone class theorem (see, e.g., \cite[I.19, I.21]{key-7}) it is sufficient to prove (\ref{eq:equivalentMK1})
for sets $A$ of the form 
$A=\{ \beta_{t}\in B,\,\xi_{1}\in B_{1},\ldots,\,\xi_{n}\in B_{n}\}$
with $B, B_{1}, B_{2},\ldots, B_{n}\in \mathcal{B}(\mathbb{R}),\, n\geq1$. Let $g:=\mathbb{I}_{B}$ and $L:=\mathbb{I}_{B_{1}\times B_{2}\times\cdots\times B_{n}}$. Then we have the equality $\mathbb{I}_{A}\!=\!g(\beta_{t})L(\xi_{1},\ldots,\,\xi_{n})$ and, setting 
\[
\eta_{k}:=\frac{W_{t_{k}}}{t_{k}}-\frac{W_{t_{k-1}}}{t_{k-1}},\quad k=1,\ldots, n,
\]
we have $\xi_{k}=\eta_{k}$ on $\{ \tau>t\}$, $k=1, \ldots, n$. But, for $r>t$, the random vector $(\eta_{1},\ldots, \eta_{n},\,\beta_{t}^{r},\beta_{t+h}^{r})$
is Gaussian and, denoting by $\textrm{cov}(X,Y)$ the covariance between two random variables $X$ and $Y$, we have that 
$\textrm{cov}(\eta_{k},\beta_{t}^{r})=\textrm{cov}(\eta_{k},\beta_{t+h}^{r})=0,\; k=1, \ldots, n$.
Thus $(\eta_{1},...,\eta_{n})$ is independent of $(\beta_{t}^{r},\beta_{t+h}^{r})$
and, with the notation $H(x,y):=f(x)g(y)$,
we also have that $L(\eta_{1},...,\eta_{n})$ is independent
of $H(\beta_{t+h}^{r},\beta_{t}^{r})$. Now we can state
the following lemma which will allow to complete the proof of Theorem
\ref{TEO:The-information-process}.
\begin{lem}
\label{lem:The-random-variables} Let $H:\, \mathbb{R}^2\mapsto\mathbb{R}$ be a measurable function. Suppose that $H$ is nonnegative or such that $\mathbf{E}\left[\left| H\left(\beta_{t+h},\beta_{t}\right)\right|\right]<+\infty$. Then the random variables $H(\beta_{t+h},\beta_{t})\mathbb{I}_{\{ \tau>t\} }$
and $L(\eta_{1},...,\eta_{n})$ are uncorrelated:
\begin{align*}
\mathbf{E}\left[H\left(\beta_{t+h},\beta_{t}\right)\mathbb{I}_{\left\{ \tau>t\right\}}L\left(\eta_{1},...,\eta_{n}\right)\right] &=\mathbf{E}\left[H\left(\beta_{t+h},\beta_{t}\right)\mathbb{I}_{\left\{ \tau>t\right\} }\right]\mathbf{E}\left[L\left(\eta_{1},...,\eta_{n}\right)\right]
\end{align*}
\end{lem}
\begin{proof}
Using the formula of total probability and Lemma \ref{lem:If--is} we get
\begin{align*}
\lefteqn{\mathbf{E}\left[H\left(\beta_{t+h},\beta_{t}\right)\mathbb{I}_{\left\{ \tau>t\right\} }L\left(\eta_{1},...,\eta_{n}\right)\right]}\\
&=\int_{\left(t,+\infty\right)}\mathbf{E}\left[H\left(\beta_{t+h},\beta_{t}\right)\mathbb{I}_{\left\{ \tau>t\right\} }L\left(\eta_{1},...,\eta_{n}\right)|\tau=r\right]dF\left(r\right)\\
&=\int_{\left(t,+\infty\right)}\mathbf{E}\left[H\left(\beta_{t+h}^{r},\beta_{t}^{r}\right)L\left(\eta_{1},...,\eta_{n}\right)\right]dF\left(r\right)\\
&=\int_{\left(t,+\infty\right)}\mathbf{E}\left[H\left(\beta_{t+h}^{r},\beta_{t}^{r}\right)\right]dF\left(r\right)\mathbf{E}\left[L\left(\eta_{1},...,\eta_{n}\right)\right]\\
&=\mathbf{E}\left[H\left(\beta_{t+h},\beta_{t}\right)\mathbb{I}_{\left\{ \tau>t\right\} }\right]\mathbf{E}\left[L\left(\eta_{1},...,\eta_{n}\right)\right].
\end{align*}
The proof of the lemma is finished.
\end{proof}
We now prove (\ref{eq:equivalentMK1}) for our special choice of $A$.
From Lemma \ref{lem:The-random-variables} above we have
\begin{align*}
\int_{A\cap\left\{ \tau>t\right\} }f\left(\beta_{t+h}\right)d\mathbf{P} & =\mathbf{E}\left[H\left(\beta_{t+h},\beta_{t}\right)\mathbb{I}_{\left\{ \tau>t\right\} }L\left(\xi_{1},...,\xi_{n}\right)\right]\\
 & =\mathbf{E}\left[H\left(\beta_{t+h},\beta_{t}\right)\mathbb{I}_{\left\{ \tau>t\right\} }L\left(\eta_{1},...,\eta_{n}\right)\right]\\
 &  =\mathbf{E}\left[f\left(\beta_{t+h}\right)g\left(\beta_{t}\right)\mathbb{I}_{\left\{ \tau>t\right\} }\right]\mathbf{E}\left[L\left(\eta_{1},...,\eta_{n}\right)\right]\\
 & =\mathbf{E}\left[\mathbf{E}\left[f\left(\beta_{t+h}\right)|\beta_{t}\right]g\left(\beta_{t}\right)\mathbb{I}_{\left\{ \tau>t\right\} }\right]\mathbf{E}\left[L\left(\eta_{1},...,\eta_{n}\right)\right]\\
 & =\mathbf{E}\left[\mathbf{E}\left[f\left(\beta_{t+h}\right)|\beta_{t}\right]g\left(\beta_{t}\right)\mathbb{I}_{\left\{ \tau>t\right\} }L\left(\eta_{1},...,\eta_{n}\right)\right]\\
 & =\mathbf{E}\left[\mathbf{E}\left[f\left(\beta_{t+h}\right)|\beta_{t}\right]g\left(\beta_{t}\right)\mathbb{I}_{\left\{ \tau>t\right\} }L\left(\xi_{1},...,\xi_{n}\right)\right]\\
 & =\int_{A\cap\left\{ \tau>t\right\} }\mathbf{E}\left[f\left(\beta_{t+h}\right)|\beta_{t}\right]d\mathbf{P},
\end{align*}
which proves that (\ref{eq:equivalentMK1}) is true and this ends
the proof.
\end{proof}
\begin{rem} Note that the Markov property is trivially extended to the completed filtration $\mathbb{F}^{P}$.
\end{rem}
\section{\label{sec:Conditional-Expectations} Bayes Estimates of the Default Time $\tau$}
\noindent The basic aim of the present section is to provide estimates of the a priori unknown default time $\tau$ based on the observation of the information process $\beta$ up to time $t$. For fixed $t\geq 0$, the observation is represented by the $\sigma$-algebra $\mathcal{F}^P_t$ and, because of the Markov property, the observation of $\beta_t$ would be sufficient. To this end it is natural to exploit the Bayesian approach.  

The idea is to use the knowledge gained from the observation of the flow $(\beta_t, \ t\geq 0)$ for updating the initial knowledge on $\tau$. At time 0, the market agents have only  \textit{a priori} knowledge about $\tau$, represented by its distribution function
$F$. As time is increasing, information concerning the default becomes available. Using the Bayes theorem (recalled in the Appendix for easy reference), the  \textit{a posteriori} distribution of $\tau$ based on the observation of $\beta$ up to $t$ can be derived and in this way agents can update their initial knowledge obtaining a sharper estimate of the default time $\tau$.  

In this section the $\sigma$-algebra generated by the future of $\beta$
at time $t$ is denoted by $\mathcal{F}^P_{t,\infty}:=\sigma(\beta_{s},\, t\leq s\leq+\infty)\vee\mathcal{N}_{P}$.
The following is a standard result on Markov processes:
\begin{lem}
\label{lem:basic MKV}Let $X=(X_{t},\, t\geq0)$ be a stochastic
process adapted to a filtration $\mathbb{F}=(\mathcal{F}_{t})_{t\geq0}$.
Then the following are equivalent:
\begin{enumerate}
\item[(i)] $X$ is Markov with respect to $\mathbb{F}$.
\item[(ii)] For each $t\geq0$ and bounded (or nonnegative) $\sigma(X_{s},\, s\geq t)$-measurable random variable $Y$ one has
\[
\mathbf{E}\left[Y|\mathcal{F}_{t}\right]=\mathbf{E}\left[Y|X_{t}\right], \; \mathbf{P}\textrm{-a.s.}
\]
\end{enumerate}
\end{lem}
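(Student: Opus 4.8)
The plan is to prove the two implications separately, the substantial one being $(i)\Rightarrow(ii)$. The implication $(ii)\Rightarrow(i)$ is immediate: for $h\geq0$ and $f$ nonnegative or bounded measurable, the random variable $Y:=f(X_{t+h})$ is $\sigma(X_s,\,s\geq t)$-measurable, so applying $(ii)$ to this particular $Y$ gives exactly the defining identity $\mathbf{E}[f(X_{t+h})|\mathcal{F}_t]=\mathbf{E}[f(X_{t+h})|X_t]$ of the Markov property.

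For $(i)\Rightarrow(ii)$, I would first note that, since $\sigma(X_t)\subseteq\mathcal{F}_t$ by adaptedness, the tower property reduces the claim to showing that $\mathbf{E}[Y|\mathcal{F}_t]$ already admits a $\sigma(X_t)$-measurable version: if so, then $\mathbf{E}[Y|X_t]=\mathbf{E}[\mathbf{E}[Y|\mathcal{F}_t]\,|\,X_t]=\mathbf{E}[Y|\mathcal{F}_t]$, which is the asserted equality. Thus it suffices to prove, for every bounded $\sigma(X_s,\,s\geq t)$-measurable $Y$, that $\mathbf{E}[Y|\mathcal{F}_t]$ is $\sigma(X_t)$-measurable.

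The core step is an induction on $n$ proving the multi-step Markov property: for $t\leq t_1\leq\cdots\leq t_n$ and bounded measurable $f_1,\ldots,f_n$, the conditional expectation $\mathbf{E}[f_1(X_{t_1})\cdots f_n(X_{t_n})\,|\,\mathcal{F}_t]$ is $\sigma(X_t)$-measurable. The base case $n=1$ is precisely $(i)$. For the inductive step, I would condition first on $\mathcal{F}_{t_{n-1}}\supseteq\mathcal{F}_t$, pull out the factors $f_1(X_{t_1}),\ldots,f_{n-1}(X_{t_{n-1}})$ (which are $\mathcal{F}_{t_{n-1}}$-measurable by adaptedness), and use $(i)$ at time $t_{n-1}$ to write $\mathbf{E}[f_n(X_{t_n})|\mathcal{F}_{t_{n-1}}]=\mathbf{E}[f_n(X_{t_n})|X_{t_{n-1}}]=g(X_{t_{n-1}})$ for a Borel function $g$ with $\|g\|_\infty\leq\|f_n\|_\infty$, the existence and measurability of $g$ being supplied by the Doob--Dynkin factorization lemma. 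Replacing $f_{n-1}$ by the bounded measurable function $\tilde f_{n-1}:=f_{n-1}\,g$ then identifies $\mathbf{E}[f_1(X_{t_1})\cdots f_n(X_{t_n})\,|\,\mathcal{F}_t]$ with $\mathbf{E}[f_1(X_{t_1})\cdots f_{n-2}(X_{t_{n-2}})\,\tilde f_{n-1}(X_{t_{n-1}})\,|\,\mathcal{F}_t]$, which is $\sigma(X_t)$-measurable by the induction hypothesis.

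Finally I would pass from these finite products to an arbitrary bounded $\sigma(X_s,\,s\geq t)$-measurable $Y$ by a functional monotone class argument: the family of finite products $\prod_{i=1}^n g_i(X_{t_i})$ with $t_i\geq t$ and $g_i$ bounded measurable is stable under multiplication and generates $\sigma(X_s,\,s\geq t)$, while the set of bounded $Y$ for which $\mathbf{E}[Y|\mathcal{F}_t]$ has a $\sigma(X_t)$-measurable version is a vector space that contains the constants and is closed under bounded monotone limits (conditional monotone convergence). The monotone class theorem (\cite[I.21]{key-7}) then yields the conclusion for all bounded $\sigma(X_s,\,s\geq t)$-measurable $Y$, and truncation $Y\wedge N$ together with conditional monotone convergence extends it to nonnegative $Y$. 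The only point requiring genuine care is the bookkeeping in the induction — keeping the auxiliary functions bounded so that the induction hypothesis remains applicable, and invoking Doob--Dynkin to realize $\mathbf{E}[f_n(X_{t_n})|X_{t_{n-1}}]$ as an honest function of $X_{t_{n-1}}$ — but I do not expect any real obstacle.
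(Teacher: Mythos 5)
Your proof is correct. Note, however, that the paper does not actually prove this lemma: its ``proof'' is a bare citation to Blumenthal and Getoor, Ch.~I, Theorem~(1.3), so there is nothing in the text to compare against step by step. What you have written is essentially the standard textbook argument that the cited reference carries out: the reduction via the tower property to showing that $\mathbf{E}[Y|\mathcal{F}_t]$ admits a $\sigma(X_t)$-measurable version, the induction on finite products $f_1(X_{t_1})\cdots f_n(X_{t_n})$ with ordered times using Doob--Dynkin to absorb $\mathbf{E}[f_n(X_{t_n})|X_{t_{n-1}}]=g(X_{t_{n-1}})$ into the $(n-1)$-st factor, and the functional monotone class theorem to pass to general bounded $\sigma(X_s,\,s\geq t)$-measurable $Y$, followed by truncation for the nonnegative case. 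All the delicate points are handled properly: the multiplicative class of finite products is indeed stable under products and generates the future $\sigma$-field, and the class of bounded $Y$ for which $\mathbf{E}[Y|\mathcal{F}_t]$ has a $\sigma(X_t)$-measurable version is closed under bounded monotone limits because one can take $\limsup_n$ of the $\sigma(X_t)$-measurable versions. So your contribution is to make self-contained what the authors delegate to the literature; there is no gap.
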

\begin{proof}
See \cite[Ch. I, Theorem (1.3)]{key-5}.
\end{proof}
The next proposition describes the structure of the a posteriori distribution of $\tau$ based on the observation of $\mathcal{F}^P_t$.
\begin{prop}
\label{prop:HJ1} For all $t,u\geq0$, it holds
\begin{equation}\label{prop:HJ1second}
\mathbf{P}\left(\tau\leq u|\mathcal{F}_{t}^{P}\right)=\mathbb{I}_{\left\{ \tau\leq t\wedge u\right\} }+\mathbf{P}\left(t<\tau\leq u|\beta_{t}\right)\mathbb{I}_{\left\{ t<\tau\right\} },\;\mathbf{P}\textrm{-a.s.}
\end{equation}
\end{prop}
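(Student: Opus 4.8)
The plan is to split $\{\tau\le u\}$ according to whether default has already occurred by time $t$. Pointwise one has
\[
\mathbb{I}_{\{\tau\le u\}}=\mathbb{I}_{\{\tau\le t\wedge u\}}+\mathbb{I}_{\{t<\tau\le u\}},
\]
and I would estimate the two summands by entirely different arguments. First I would dispose of the degenerate cases: for $t=0$ the conditioning $\sigma$-algebra $\mathcal{F}^{P}_{0}$ is $\mathbf{P}$-trivial (recall $\beta_{0}\equiv0$) and both sides equal $F(u)=\mathbf{P}(\tau\le u)$; and if $0<u\le t$ then $\{t<\tau\le u\}=\emptyset$ while $\{\tau\le t\wedge u\}=\{\tau\le u\}\in\mathcal{F}^{P}_{u}\subseteq\mathcal{F}^{P}_{t}$, so the identity is clear. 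Hence I assume $0<t<u$ from now on.

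Applying $\mathbf{E}[\,\cdot\,|\mathcal{F}^{P}_{t}]$ to the displayed identity, the first summand is handled immediately: by Proposition~\ref{lem:For-all-,}, $\tau$ is an $\mathbb{F}^{P}$-stopping time, so $\{\tau\le t\wedge u\}=\{\tau\le t\}\in\mathcal{F}^{P}_{t}$ and $\mathbf{E}[\mathbb{I}_{\{\tau\le t\wedge u\}}|\mathcal{F}^{P}_{t}]=\mathbb{I}_{\{\tau\le t\wedge u\}}$. It then remains to prove
\[
\mathbf{E}\bigl[\mathbb{I}_{\{t<\tau\le u\}}\,\big|\,\mathcal{F}^{P}_{t}\bigr]=\mathbf{P}(t<\tau\le u\,|\,\beta_{t})\,\mathbb{I}_{\{t<\tau\}},\qquad\mathbf{P}\textrm{-a.s.}
\]

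The crux is to re-express $\{t<\tau\le u\}$ through values of $\beta$ at times $\ge t$, using Proposition~\ref{lem:For-all-,}: since $\{\beta_{t}=0\}=\{\tau\le t\}$ and $\{\beta_{u}=0\}=\{\tau\le u\}$ $\mathbf{P}$-a.s., also $\{t<\tau\le u\}=\{\beta_{u}=0\}\setminus\{\beta_{t}=0\}$ $\mathbf{P}$-a.s.; hence $\mathbb{I}_{\{t<\tau\le u\}}$ coincides $\mathbf{P}$-a.s. with the bounded, $\sigma(\beta_{s},\,s\ge t)$-measurable random variable $Y:=\mathbb{I}_{\{\beta_{u}=0\}}-\mathbb{I}_{\{\beta_{t}=0\}}$. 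Since $\beta$ is Markov with respect to $\mathbb{F}^{P}$ (Theorem~\ref{TEO:The-information-process} and the remark following it), Lemma~\ref{lem:basic MKV} gives $\mathbf{E}[Y|\mathcal{F}^{P}_{t}]=\mathbf{E}[Y|\beta_{t}]$ $\mathbf{P}$-a.s., which after replacing $Y$ by $\mathbb{I}_{\{t<\tau\le u\}}$ reads $\mathbf{E}[\mathbb{I}_{\{t<\tau\le u\}}|\mathcal{F}^{P}_{t}]=\mathbf{P}(t<\tau\le u\,|\,\beta_{t})$ $\mathbf{P}$-a.s. Finally, to bring in the factor $\mathbb{I}_{\{t<\tau\}}$ I would use Proposition~\ref{lem:For-all-,} once more: $\mathbb{I}_{\{t<\tau\}}=\mathbb{I}_{\{\beta_{t}\ne0\}}$ $\mathbf{P}$-a.s., this is $\sigma(\beta_{t})$-measurable and equals $1$ on $\{t<\tau\le u\}$ up to a null set, so $\mathbf{P}(t<\tau\le u\,|\,\beta_{t})=\mathbf{P}(t<\tau\le u\,|\,\beta_{t})\,\mathbb{I}_{\{t<\tau\}}$ $\mathbf{P}$-a.s. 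Combining the two summands yields the asserted formula.

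I do not expect a real obstacle: the substance is Proposition~\ref{lem:For-all-,} plus the Markov property of $\beta$. The only genuinely delicate point is the bookkeeping with $\mathbf{P}$-null sets — because $\beta$ typically returns to $0$ long before $\tau$, the identities $\{\beta_{s}=0\}=\{\tau\le s\}$ hold only $\mathbf{P}$-a.s., and since $\sigma(\beta_{t})$ is not $\mathbf{P}$-complete one must apply Lemma~\ref{lem:basic MKV} to the honestly $\sigma(\beta_{s},\,s\ge t)$-measurable representative $Y$ (and not directly to $\mathbb{I}_{\{t<\tau\le u\}}$), and argue the insertion of $\mathbb{I}_{\{t<\tau\}}$ with comparable care.
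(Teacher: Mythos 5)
Your proof is correct and follows essentially the same route as the paper's: the same decomposition $\{\tau\le u\}=\{\tau\le t\wedge u\}\cup\{t<\tau\le u\}$, the same $\mathbf{P}$-a.s.\ identification of $\{t<\tau\le u\}$ with an event determined by $(\beta_t,\beta_u)$ via Proposition \ref{lem:For-all-,}, and the same appeal to Lemma \ref{lem:basic MKV} through the Markov property. You merely spell out the null-set bookkeeping and the degenerate cases $t=0$ and $u\le t$ more explicitly than the paper does.
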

\begin{proof}
Obviously, we have $\{ \tau\leq u\} =\{ \tau\leq t\wedge u\} \cup\{ t<\tau\leq u\}$. The first set of the right-hand side of the above decomposition yields the first summand of the statement. Then it suffices to observe that we have the relation
$\{ t<\tau\leq u\} =\{ \beta_{t}\neq0,\beta_{u}=0\}$      $\mathbf{P}\textrm{-a.s.}$ where the set on the right-hand side of the above equality belongs to $\mathcal{F}^P_{t,\infty}$. It remains to apply Lemma \ref{lem:basic MKV} in order to complete the proof of the statement.
\end{proof}
Recalling that $F$ denotes the distribution function of $\tau$ and
formula (\ref{eq:bbdensity}) for the definition of the function $\varphi_{t}(r,x)$ (which is equal to the density of the Brownian bridge $\beta_{t}^{r}$ at time $t<r$), we have the following result which provides the explicit form of the a posteriori distribution of $\tau$ based on the observation of $\beta$ up to $t$.
\begin{thm}
\label{lem:PRE-theo-4.3}Let $t>0$. Then, for all $u>0$, $\mathbf{P}$-a.s.
\begin{align}
\mathbf{P}\left(\tau\leq u|\mathcal{F}_{t}^{P}\right) =\mathbb{I}_{\left\{\tau\leq t\wedge u\right\}}+\frac{{\displaystyle \int_{\left(t,u\right]}}\varphi_{t}\left(r,\beta_{t}\right)dF(r)}{{\displaystyle \int_{\left(t,+\infty\right)}}\varphi_{t}\left(v,\beta_{t}\right)dF(v)}\,\mathbb{I}_{\left\{ t<\tau\right\}}\,.\label{eq:condexptau-1}
\end{align}
\end{thm}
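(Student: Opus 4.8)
The plan is to start from the decomposition already obtained in Proposition~\ref{prop:HJ1}, namely
\[
\mathbf{P}\left(\tau\leq u|\mathcal{F}_{t}^{P}\right)=\mathbb{I}_{\left\{\tau\leq t\wedge u\right\}}+\mathbf{P}\left(t<\tau\leq u|\beta_{t}\right)\mathbb{I}_{\left\{t<\tau\right\}},\;\mathbf{P}\textrm{-a.s.},
\]
so that the only thing left to compute is the conditional probability $\mathbf{P}(t<\tau\leq u\mid \beta_t)$ on the set $\{t<\tau\}$, and to show it equals the announced ratio of integrals. Since this quantity is $\sigma(\beta_t)$-measurable, it suffices to identify a Borel function $\psi$ such that $\mathbf{P}(t<\tau\le u\mid\beta_t)=\psi(\beta_t)$ $\mathbf{P}$-a.s., and the natural candidate is the ratio $\psi(x)=\big(\int_{(t,u]}\varphi_t(r,x)\,dF(r)\big)\big/\big(\int_{(t,\infty)}\varphi_t(v,x)\,dF(v)\big)$.

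The key computational step is to verify, for an arbitrary bounded Borel test function $g$, the identity
\[
\mathbf{E}\!\left[\mathbb{I}_{\{t<\tau\le u\}}\,g(\beta_t)\right]=\mathbf{E}\!\left[\psi(\beta_t)\,g(\beta_t)\right].
\]
For the left-hand side I would condition on $\tau$ using the formula of total probability together with Corollary~\ref{cor:LEM if--is}: writing $h(r,w):=\mathbb{I}_{\{t<r\le u\}}g(w_t)$ and noting $h(\tau,\beta)=\mathbb{I}_{\{t<\tau\le u\}}g(\beta_t)$, we get
\[
\mathbf{E}\!\left[\mathbb{I}_{\{t<\tau\le u\}}g(\beta_t)\right]=\int_{(t,u]}\mathbf{E}\!\left[g(\beta_t^r)\right]dF(r)=\int_{(t,u]}\!\!\int_{\mathbb{R}}g(x)\,\varphi_t(r,x)\,dx\;dF(r),
\]
where the last equality uses that $\varphi_t(r,\cdot)$ is the density of $\beta_t^r$ for $t<r$ (equation~(\ref{eq:bbdensity})); a Fubini–Tonelli interchange (legitimate because $g$ is bounded and $\varphi_t\ge 0$) then rewrites this as $\int_{\mathbb{R}}g(x)\big(\int_{(t,u]}\varphi_t(r,x)\,dF(r)\big)dx$. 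Applying the same reasoning with $u=+\infty$ shows that the law of $\beta_t$ restricted to $\{t<\tau\}$, more precisely $\mathbf{E}[\mathbb{I}_{\{t<\tau\}}g(\beta_t)]$, equals $\int_{\mathbb{R}}g(x)\big(\int_{(t,\infty)}\varphi_t(v,x)\,dF(v)\big)dx$. Hence $\int_{(t,\infty)}\varphi_t(v,x)\,dF(v)\,dx$ is (up to the mass $\mathbf{P}(\tau>t)$, which is irrelevant here since both sides carry it) the distribution of $\beta_t$ on $\{\tau>t\}$; dividing, one reads off that $\psi(\beta_t)$ is a version of $\mathbf{E}[\mathbb{I}_{\{t<\tau\le u\}}\mid\sigma(\beta_t)]$ on that set, and multiplying through by $\mathbb{I}_{\{t<\tau\}}$ gives exactly the second summand in the claimed formula.

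The main obstacle, and the point needing care rather than cleverness, is the well-definedness of the ratio: one must check that the denominator $\int_{(t,\infty)}\varphi_t(v,\beta_t)\,dF(v)$ is strictly positive $\mathbf{P}$-a.s.\ on $\{\tau>t\}$, so that $\psi$ is a bona fide function there. This follows because on $\{\tau>t\}$ the conditional law of $\beta_t$ given $\tau=r$ (with $r>t$) is the nondegenerate Gaussian $\varphi_t(r,\cdot)$, which is strictly positive on all of $\mathbb{R}$; integrating against $dF$ over $(t,\infty)$, which has positive mass precisely on $\{\tau>t\}$, keeps it strictly positive. A secondary technical point is measurability and integrability of $x\mapsto\int_{(t,\infty)}\varphi_t(v,x)\,dF(v)$, which is immediate from the joint measurability of $(v,x)\mapsto\varphi_t(v,x)$ and Tonelli. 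Once these are in place, the identity above together with Proposition~\ref{prop:HJ1} yields~(\ref{eq:condexptau-1}), and the restriction $t>0$ is used only to ensure $\varphi_t(r,\cdot)$ is a genuine nondegenerate density.
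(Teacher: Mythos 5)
Your overall route coincides with the paper's: reduce via Proposition \ref{prop:HJ1} to computing $\mathbf{P}(t<\tau\leq u\mid\beta_{t})$ on $\{t<\tau\}$ and then identify that conditional probability by a Bayes-type computation. The paper simply invokes Corollary \ref{cor:(Bayes-formula)-a.s.} with the dominating measure $\delta_{0}+l$, whereas you carry out the verification by hand; your computations of $\mathbf{E}[\mathbb{I}_{\{t<\tau\leq u\}}g(\beta_{t})]$ and of the law of $\beta_{t}$ on $\{\tau>t\}$ via Corollary \ref{cor:LEM if--is}, and your argument for the strict positivity of the denominator, are all correct.

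There is, however, one step that fails as written: the identity $\mathbf{E}[\mathbb{I}_{\{t<\tau\leq u\}}g(\beta_{t})]=\mathbf{E}[\psi(\beta_{t})g(\beta_{t})]$ is not true for \emph{arbitrary} bounded Borel $g$. The law of $\beta_{t}$ is not absolutely continuous: it has an atom at $0$ of mass $F(t)$, because $\beta_{t}=0$ on $\{\tau\leq t\}$. Consequently
\[
\mathbf{E}\left[\psi(\beta_{t})\,g(\beta_{t})\right]=\psi(0)\,g(0)\,F(t)+\int_{\mathbb{R}}g(x)\int_{(t,u]}\varphi_{t}(r,x)\,dF(r)\,dx\,,
\]
and $\psi(0)>0$ whenever $F(u)>F(t)$, so taking $g\equiv 1$ with $F(t)>0$ already gives a counterexample. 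The repair is immediate and consistent with your own phrase ``on that set'': test only against functions $g$ with $g(0)=0$, equivalently against $g(\beta_{t})\mathbb{I}_{\{\beta_{t}\neq0\}}$. Since $\{\beta_{t}\neq0\}=\{t<\tau\}$ $\mathbf{P}$-a.s.\ by Proposition \ref{lem:For-all-,}, this still identifies $\mathbf{E}[\mathbb{I}_{\{t<\tau\leq u\}}\mid\beta_{t}]$ with $\psi(\beta_{t})$ relative to the absolutely continuous part of $\mathbf{P}_{\beta_{t}}$, hence $\mathbf{P}$-a.s.\ on $\{t<\tau\}$, which is all that \eqref{eq:condexptau-1} requires. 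This atom is exactly what the paper's Appendix handles by choosing the dominating measure $\mu=\delta_{0}+l$ in the abstract Bayes theorem, so that the conditional law of $\beta_t$ given $\tau=r$ has a density $p(r,x)$ for every $r>0$, including $r\leq t$, rather than only for $r>t$.
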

\begin{proof}
The result is a consequence of Proposition \ref{prop:HJ1} and
the Bayes formula (see Corollary \ref{cor:(Bayes-formula)-a.s.}).
\end{proof}
Theorem \ref{lem:PRE-theo-4.3} can be extended to functions $g$ on $\mathbb{R}_+$ as it will be stated in the following corollary.
\begin{cor}
\label{LEM:prop_cond_exp_tau}Let $t>0,\; g:\mathbb{R}_{+}\rightarrow\mathbb{R}$
be a Borel function such that $\mathbf{E}\left[|g(\tau)|\right]<+\infty$.
Then, $\mathbf{P}$-a.s.,
\begin{align}
\mathbf{E}\left[g\left(\tau\right)|\mathcal{F}_{t}^{P}\right] & =g\left(\tau\right)\mathbb{I}_{\left\{ \tau\leq t\right\} }+\frac{{\displaystyle \int_{\left(t,+\infty\right)}}g\left(r\right)\varphi_{t}\left(r,\beta_{t}\right)dF(r)}{{\displaystyle \int_{\left(t,+\infty\right)}}\varphi_{t}\left(v,\beta_{t}\right)dF(v)}\,\mathbb{I}_{\left\{ t<\tau\right\}}\,. \label{eq:condexptau2}
\end{align}
\end{cor}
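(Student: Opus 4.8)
The plan is to deduce the corollary from Theorem~\ref{lem:PRE-theo-4.3} by a functional monotone class argument, and then to remove the boundedness restriction on $g$ by a truncation.

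Before that I would dispose of two routine points. Since $\tau$ is an $\mathbb{F}^{P}$-stopping time (Proposition~\ref{lem:For-all-,}), the variable $\tau\wedge t$ is $\mathcal{F}_{t}^{P}$-measurable, hence $g(\tau)\mathbb{I}_{\{\tau\leq t\}}=g(\tau\wedge t)\mathbb{I}_{\{\tau\leq t\}}$ is $\mathcal{F}_{t}^{P}$-measurable and integrable, so it coincides with its own conditional expectation given $\mathcal{F}_{t}^{P}$; this accounts for the first summand in (\ref{eq:condexptau2}). Next, for every $v>t$ the Gaussian density $\varphi_{t}(v,\cdot)$ is everywhere strictly positive, so whenever $\mathbf{P}(\tau>t)>0$ the denominator $D:=\int_{(t,+\infty)}\varphi_{t}(v,\beta_{t})\,dF(v)$ is $\mathbf{P}$-a.s.\ strictly positive, while on $\{t<\tau\}$ (where $\beta_{t}\neq0$ $\mathbf{P}$-a.s.) the function $\varphi_{t}(\cdot,\beta_{t})$ is bounded, so $D$ and the numerator $\int_{(t,+\infty)}|g(r)|\varphi_{t}(r,\beta_{t})\,dF(r)\leq\big(\sup_{v>t}\varphi_{t}(v,\beta_{t})\big)\,\mathbf{E}[|g(\tau)|\mid\sigma(\tau)]$ are $\mathbf{P}$-a.s.\ finite there; hence the ratio in (\ref{eq:condexptau2}) is well defined. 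If $\mathbf{P}(\tau>t)=0$ then $\{t<\tau\}$ is $\mathbf{P}$-null and the statement is vacuous. Finally, by Fubini's theorem $\omega\mapsto\int_{(t,+\infty)}g(r)\varphi_{t}(r,\beta_{t}(\omega))\,dF(r)$ is a measurable function of $\beta_{t}$, so the right-hand side of (\ref{eq:condexptau2}) is $\mathcal{F}_{t}^{P}$-measurable.

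For bounded $g$ I would use the monotone class theorem. Let $\mathcal{H}$ be the collection of bounded Borel functions $g\colon\mathbb{R}_{+}\to\mathbb{R}$ for which (\ref{eq:condexptau2}) holds $\mathbf{P}$-a.s. Both sides of (\ref{eq:condexptau2}) are linear in $g$, so $\mathcal{H}$ is a vector space containing the constants; and if $0\leq g_{n}\uparrow g$ with $g$ bounded, then conditional monotone convergence on the left and ordinary monotone convergence in the numerator on the right (the denominator $D$ being a fixed, $\mathbf{P}$-a.s.\ strictly positive random variable) give $g\in\mathcal{H}$, so $\mathcal{H}$ is closed under bounded monotone limits. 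Moreover $\mathbb{I}_{(0,u]}\in\mathcal{H}$ for every $u>0$: indeed, for $g=\mathbb{I}_{(0,u]}$ one has $g(\tau)\mathbb{I}_{\{\tau\leq t\}}=\mathbb{I}_{\{\tau\leq t\wedge u\}}$ and $\int_{(t,+\infty)}g(r)\varphi_{t}(r,\beta_{t})\,dF(r)=\int_{(t,u]}\varphi_{t}(r,\beta_{t})\,dF(r)$, so (\ref{eq:condexptau2}) reduces precisely to identity (\ref{eq:condexptau-1}) of Theorem~\ref{lem:PRE-theo-4.3}. Since $\{(0,u]:u>0\}$ is a $\pi$-system generating $\mathcal{B}(\mathbb{R}_{+})$, the functional monotone class theorem (see, e.g., \cite[I.21]{key-7}) yields that $\mathcal{H}$ contains all bounded Borel functions on $\mathbb{R}_{+}$.

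To pass to a general $g$ with $\mathbf{E}[|g(\tau)|]<+\infty$, I would apply the bounded case to the truncations $g^{+}\wedge n$ and $g^{-}\wedge n$ and let $n\to\infty$: conditional monotone convergence on the left and monotone convergence in the numerator on the right yield (\ref{eq:condexptau2}) for $g^{+}$ and for $g^{-}$, with both conditional expectations $\mathbf{P}$-a.s.\ finite since $\mathbf{E}[g^{\pm}(\tau)]<+\infty$; subtracting the two identities then gives the claim. I do not foresee a genuine obstacle here: the only points requiring some care are the strict positivity (and finiteness on $\{t<\tau\}$) of the denominator $D$ and the $\mathbf{P}$-a.s.\ finiteness that legitimizes the decomposition $g=g^{+}-g^{-}$, both of which are dealt with in the preliminary step.
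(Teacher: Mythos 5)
Your proposal is correct and follows essentially the same route as the paper: the paper likewise obtains the bounded case by a monotone class argument reducing to Theorem \ref{lem:PRE-theo-4.3} applied to indicators, and then passes to general integrable $g$ by pointwise approximation with bounded functions. Your write-up merely fills in the routine details (measurability of the first summand, positivity and finiteness of the denominator, and the truncation $g^{\pm}\wedge n$) that the paper leaves implicit.
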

\begin{proof}
If the function $g$ is bounded then the statement immediately follows
by an application of the monotone class theorem to simple functions where it is possible to use Theorem \ref{lem:PRE-theo-4.3}. In the general case $g$ has to be approximated pointwise by bounded functions and by passing to the limit.
\end{proof}
\begin{rem}
We point out that the function $\phi_{t}$ defined by
\begin{equation}
\phi_{t}\left(r,x\right):=\frac{\varphi_{t}\left(r,x\right)}{{\displaystyle \int_{\left(t,+\infty\right)}\varphi_{t}\left(v,x\right)dF\left(v\right)}}, \; \left(r,t\right)\in\left(0,+\infty\right)\times\mathbb{R}_+,\; x\in\mathbb{R}\,, \label{eq:densitybeta2}
\end{equation}
is, for $t<r$, the a posteriori density function of $\tau$ on $\{\tau>t\}$ based on the observation $\beta_t=x$ (see Corollary \ref{cor:The-conditional-density}).
Then relation (\ref{eq:condexptau-1}),
representing the a posteriori distribution of $\tau$ based on the observation of $\mathcal{F}_t^P$, can be
rewritten as 
\[
\mathbf{P}\left(\tau\leq u|\mathcal{F}_{t}^{P}\right)=\mathbb{I}_{\left\{ \tau\leq t\right\}} +{\displaystyle \int_{\left(t,u\right]}}\phi_{t}\left(r,\beta_{t}\right)dF\left(r\right)\mathbb{I}_{\left\{ t<\tau\right\} },\;\mathbf{P}\textrm{-a.s.},
\]
while (\ref{eq:condexptau2}) is equal to the expression 
\[
\mathbf{E}\left[g\left(\tau\right)|\mathcal{F}_{t}^{P}\right]=g\left(\tau\right)\mathbb{I}_{\left\{ \tau\leq t\right\} }+{\displaystyle \int_{\left(t,+\infty\right)}g\left(r\right)}\,\phi_{t}\left(r,\beta_{t}\right)dF\left(r\right)\mathbb{I}_{\left\{ t<\tau\right\} },\;\mathbf{P}\textrm{-a.s.}
\]
Here it is possible to see how the Bayesian estimate of $\tau$ given above provides a better knowledge on the default time $\tau$ through the observation of the information process $\beta$ at time $t$.
\end{rem}
\section{Extensions of the Bayes Estimates}
\noindent In this section we shall deal with an extension of the Bayes estimates of $\tau$ provided in Section \ref{sec:Conditional-Expectations}. Roughly speaking we shall derive formulas which include the Bayes estimates discussed in Section \ref{sec:Conditional-Expectations} as well as the prognose of the information process 
$\beta$ at some time $u$, the latter being related with the Markov property which has been proven in Section \ref{sec:Definition-and-First} (see Theorem \ref{TEO:The-information-process}).
First we will state a lemma that will be used in the proof of the main results of this section.
\begin{lem}
\label{lem:AUXlemma}Let $0\leq t<u$ and $g$ be a measurable function
on $(0,+\infty)\times\mathbb{R}$ such that $g(\tau,\beta_{u})$ is integrable. Then it holds $\mathbf{P}$-a.s.
\begin{align*}
\mathbf{E}\left[g\left(\tau,\beta_{u}\right)\mathbb{I}_{\left\{ t<\tau\right\} }|\mathcal{F}_{t}^{P}\right] & =\mathbf{E}\left[g\left(\tau,\beta_{u}\right)|\beta_{t}\right]\mathbb{I}_{\left\{ t<\tau\right\} }\\
 & =\mathbf{E}\left[\mathbf{E}\left[g\left(\tau,\beta_{u}\right)|\sigma\left(\tau\right)\vee\sigma\left(\beta_{t}\right)\right]|\beta_{t}\right]\mathbb{I}_{\left\{ t<\tau\right\} }\\
 & =\mathbf{E}\left[\left(\mathbf{E}\left[g\left(r,\beta_{u}^{r}\right)|\beta_{t}^{r}\right]\right)_{r=\tau}|\beta_{t}\right]\mathbb{I}_{\left\{ t<\tau\right\} }.
\end{align*}
\end{lem}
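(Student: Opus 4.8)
The statement is a chain of three equalities; the middle one (between the second and third displayed lines) is just the tower property for conditional expectations, since $\sigma(\beta_{t})\subseteq\sigma(\tau)\vee\sigma(\beta_{t})$, so, after multiplying through by $\mathbb{I}_{\{t<\tau\}}$, it is immediate. The substance therefore lies in the first equality (reducing the conditioning $\sigma$-field from $\mathcal{F}_{t}^{P}$ to $\sigma(\beta_{t})$) and in the third (passing to the frozen Brownian bridge $\beta^{r}$). The two main tools will be the characterization of the Markov property in Lemma~\ref{lem:basic MKV}, applied to $\beta$, which is $\mathbb{F}^{P}$-Markov by Theorem~\ref{TEO:The-information-process} and the remark following it, and the conditioning-on-$\tau$ identity of Corollary~\ref{cor:LEM if--is}, together with a monotone class argument. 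As usual, the whole proof will first be carried out for bounded $g$ and the integrable case recovered at the end by monotone convergence and the decomposition $g=g^{+}-g^{-}$.

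For the first equality, set $Y:=g(\tau,\beta_{u})\mathbb{I}_{\{t<\tau\}}$. The plan is to show that $Y$ coincides $\mathbf{P}$-a.s.\ with a $\sigma(\beta_{s},\,s\geq t)$-measurable random variable; Lemma~\ref{lem:basic MKV} then gives at once $\mathbf{E}[Y\,|\,\mathcal{F}_{t}^{P}]=\mathbf{E}[Y\,|\,\beta_{t}]$, and since $\{t<\tau\}=\{\beta_{t}\neq0\}$ $\mathbf{P}$-a.s.\ by Proposition~\ref{lem:For-all-,}, the indicator $\mathbb{I}_{\{t<\tau\}}=\mathbb{I}_{\{\beta_{t}\neq0\}}$ is $\sigma(\beta_{t})$-measurable and may be taken out of $\mathbf{E}[\,\cdot\,|\,\beta_{t}]$, which yields the right-hand side $\mathbf{E}[g(\tau,\beta_{u})\,|\,\beta_{t}]\mathbb{I}_{\{t<\tau\}}$. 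The measurability claim is the step I expect to be the main obstacle: it amounts to saying that, on $\{t<\tau\}$, the random time $\tau$ can be reconstructed from the tail $(\beta_{s})_{s\geq t}$. For this I will use Proposition~\ref{lem:For-all-,} once more: it gives $\{\tau\leq s\}=\{\beta_{s}=0\}$ $\mathbf{P}$-a.s.\ for every $s\geq t$, hence $\tau\vee t$ is $\mathcal{F}_{t,\infty}^{P}$-measurable (its sublevel set at a level $s\geq t$ equals $\{\beta_{s}=0\}$ up to a $\mathbf{P}$-null set, and it is empty for $s<t$); consequently, using $u>t$, the variable $g(\tau\vee t,\beta_{u})\mathbb{I}_{\{\beta_{t}\neq0\}}$ is $\mathcal{F}_{t,\infty}^{P}=\sigma(\beta_{s},\,s\geq t)\vee\mathcal{N}_{P}$-measurable, and it coincides $\mathbf{P}$-a.s.\ with $Y$ because $\tau\vee t=\tau$ on $\{t<\tau\}$.

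For the third equality, introduce $G(r,x):=\mathbf{E}[g(r,\beta_{u}^{r})\,|\,\beta_{t}^{r}=x]$, which for $F$-a.e.\ $r>t$ admits the explicit jointly measurable version furnished by property~(iii) of Section~2 (formula~(\ref{eq:cond exp ext bb}) applied to $g(r,\cdot)$); since $\beta_{t}=\beta_{t}^{\tau}$, one has $G(\tau,\beta_{t})=\bigl(\mathbf{E}[g(r,\beta_{u}^{r})\,|\,\beta_{t}^{r}]\bigr)_{r=\tau}$ on $\{t<\tau\}$. By the tower property it suffices to prove $\mathbf{E}[g(\tau,\beta_{u})\,|\,\sigma(\tau)\vee\sigma(\beta_{t})]\,\mathbb{I}_{\{t<\tau\}}=G(\tau,\beta_{t})\,\mathbb{I}_{\{t<\tau\}}$ $\mathbf{P}$-a.s.\ and then to condition on $\beta_{t}$. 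Both sides being $\sigma(\tau)\vee\sigma(\beta_{t})$-measurable, and this $\sigma$-field being generated by products $\psi(\tau)\chi(\beta_{t})$ with $\psi,\chi$ bounded Borel, a monotone class argument reduces the claim to the identity $\mathbf{E}[g(\tau,\beta_{u})\psi(\tau)\chi(\beta_{t})\mathbb{I}_{\{t<\tau\}}]=\mathbf{E}[G(\tau,\beta_{t})\psi(\tau)\chi(\beta_{t})\mathbb{I}_{\{t<\tau\}}]$. Evaluating both sides by conditioning on $\tau=r$ through Corollary~\ref{cor:LEM if--is}, the left-hand side becomes $\int_{(t,\infty)}\psi(r)\,\mathbf{E}[g(r,\beta_{u}^{r})\chi(\beta_{t}^{r})]\,dF(r)$ and the right-hand side $\int_{(t,\infty)}\psi(r)\,\mathbf{E}[G(r,\beta_{t}^{r})\chi(\beta_{t}^{r})]\,dF(r)$, and these coincide integrand by integrand because $\mathbf{E}[g(r,\beta_{u}^{r})\chi(\beta_{t}^{r})]=\mathbf{E}[G(r,\beta_{t}^{r})\chi(\beta_{t}^{r})]$ is precisely the defining property of $G(r,\cdot)$ as the conditional expectation of $g(r,\beta_{u}^{r})$ given $\beta_{t}^{r}$. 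The passage from bounded $g$ to integrable $g$ in all three equalities is then routine.
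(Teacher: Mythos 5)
Your proposal is correct and follows essentially the same route as the paper: the first equality via the $\mathcal{F}^P_{t,\infty}$-measurability of $g(\tau,\beta_u)\mathbb{I}_{\{t<\tau\}}$ (which you usefully justify in more detail than the paper, via $\{\tau\le s\}=\{\beta_s=0\}$ $\mathbf{P}$-a.s.) together with Lemma \ref{lem:basic MKV}, and the third equality by testing against functions of $\beta_t$ on $\{t<\tau\}$ and disintegrating over $\tau=r$ with Corollary \ref{cor:LEM if--is}. The only cosmetic difference is that for the last step you pass through the explicit identification of $\mathbf{E}[g(\tau,\beta_u)\,|\,\sigma(\tau)\vee\sigma(\beta_t)]$ with a two-variable monotone class argument and then tower down, whereas the paper tests directly with $h(\beta_t)\mathbb{I}_{\{t<\tau\}}$; both reduce to the same defining property of $\mathbf{E}[g(r,\beta^r_u)\,|\,\beta^r_t]$.
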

\begin{proof}
It is clear that the first equality holds true due to the fact that
$g(\tau,\beta_{u})\mathbb{I}_{\{ t<\tau\} }$
is $\mathcal{F}_{t,\infty}$-measurable and, hence, Lemma \ref{lem:basic MKV}
can be applied. The second equality is obvious. Let $h$ be an arbitrary bounded Borel function. Using
Lemma \ref{lem:If--is} we have that
\begin{align*}
\mathbf{E}\left[g\left(\tau,\beta_{u}\right)h\left(\beta_{t}\right)\mathbb{I}_{\left\{ t<\tau\right\} }\right] & =\int_{\left(t,+\infty\right)}\mathbf{E}\left[g\left(r,\beta_{u}^{r}\right)h\left(\beta_{t}^{r}\right)\right]dF\left(r\right)\\
 & =\int_{\left(t,+\infty\right)}\mathbf{E}\left[\mathbf{E}\left[g\left(r,\beta_{u}^{r}\right)h\left(\beta_{t}^{r}\right)|\beta_{t}^{r}\right]\right]dF\left(r\right)\\
 & =\int_{\left(t,+\infty\right)}\mathbf{E}\left[\mathbf{E}\left[g\left(r,\beta_{u}^{r}\right)|\beta_{t}^{r}\right]h\left(\beta_{t}^{r}\right)\right]dF\left(r\right)\\
 & =\mathbf{E}\left[\mathbf{E}\left[g\left(r,\beta_{u}^{r}\right)|\beta_{t}^{r}\right]_{r=\tau}h\left(\beta_{t}\right)\mathbb{I}_{\left\{ t<\tau\right\} }\right]\,,
\end{align*}
that is,
\[
\mathbf{E}\left[\left(g\left(\tau,\beta_{u}\right)-\mathbf{E}\left[g\left(r,\beta_{u}^{r}\right)|\beta_{t}^{r}\right]_{r=\tau}\right)h\left(\beta_{t}\right)\mathbb{I}_{\left\{ t<\tau\right\} }\right]=0.
\]
But $h$ is arbitrary  and thus
\begin{align*}
\mathbf{E}\left[g\left(\tau,\beta_{u}\right)|\beta_{t}\right] & =\mathbf{E}\left[\mathbf{E}\left[g\left(r,\beta_{u}^{r}\right)|\beta_{t}^{r}\right]_{r=\tau}|\beta_{t}\right],
\end{align*}
$\mathbf{P}$-a.s. on $\{ t<\tau\} $, for $t<u$.
\end{proof}
The next theorem is prepared by the following result. 
\begin{prop}
\label{cor:Let--be-BIS} Let $t\geq0$ and $g$ be a measurable function
such that $g(\tau,\beta_{t})$ is integrable. Then, $\mathbf{P}$-a.s.,
\[
\mathbf{E}\left[g\left(\tau,\beta_{t}\right)|\mathcal{F}_{t}^{\beta}\right]=g\left(\tau,0\right)\mathbb{I}_{\left\{ \tau\leq t\right\} }+\int_{\left(t,+\infty\right)}g\left(r,\beta_{t}\right)\phi_{t}\left(r,\beta_{t}\right)dF\left(r\right)\mathbb{I}_{\left\{ t<\tau\right\} }\,.
\]
\end{prop}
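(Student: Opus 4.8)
The plan is to split $g(\tau,\beta_t)=g(\tau,\beta_t)\mathbb{I}_{\{\tau\le t\}}+g(\tau,\beta_t)\mathbb{I}_{\{t<\tau\}}$ and to treat the two summands separately; we may assume $t>0$, the case $t=0$ being trivial since $\mathcal{F}_0^{\beta}=\mathcal{N}_P$ and $\tau>0$ $\mathbf{P}$-a.s. On $\{\tau\le t\}$ Proposition \ref{lem:For-all-,} gives $\beta_t=0$ $\mathbf{P}$-a.s., so $g(\tau,\beta_t)\mathbb{I}_{\{\tau\le t\}}=g(\tau,0)\mathbb{I}_{\{\tau\le t\}}$ $\mathbf{P}$-a.s.; since $\tau$ is an $\mathbb{F}^{\beta}$-stopping time and $\tau=\tau\wedge t$ on $\{\tau\le t\}\in\mathcal{F}_t^{\beta}$, the random variable $g(\tau,0)\mathbb{I}_{\{\tau\le t\}}$ is already $\mathcal{F}_t^{\beta}$-measurable, hence it equals its own conditional expectation and produces the first term of the asserted identity.

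For the part on $\{t<\tau\}$ I would first reduce the conditioning from $\mathcal{F}_t^{\beta}$ down to $\sigma(\beta_t)$; this is the counterpart, for the time $u=t$ and the filtration $\mathbb{F}^{\beta}$, of Lemma \ref{lem:AUXlemma}. The key point is that, modulo a $\mathbf{P}$-null set, $g(\tau,\beta_t)\mathbb{I}_{\{t<\tau\}}$ is measurable with respect to $\sigma(\beta_s,\,s\ge t)$: by path continuity $\beta_t=\lim_{s\downarrow t}\beta_s$, and on $\{t<\tau\}$ one recovers $\tau$ from the post-$t$ trajectory via $\tau=\inf\{s>t:\beta_u=0\textrm{ for all }u\ge s\}$, using that $\beta_s=0$ for $s\ge\tau$ while $\beta$ has no interval of constancy on $[0,\tau)$ (there it is a Brownian-bridge path). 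Hence, by the Markov property of $\beta$ and Lemma \ref{lem:basic MKV},
\[
\mathbf{E}\left[g(\tau,\beta_t)\mathbb{I}_{\{t<\tau\}}|\mathcal{F}_t^{\beta}\right]=\mathbf{E}\left[g(\tau,\beta_t)\mathbb{I}_{\{t<\tau\}}|\beta_t\right],\qquad\mathbf{P}\textrm{-a.s.}
\]
For $\mathbb{F}^{P}$ in place of $\mathbb{F}^{\beta}$ this follows at once from Theorem \ref{TEO:The-information-process} and Lemma \ref{lem:basic MKV}; the legitimate passage to the right-continuous $\mathcal{F}_t^{\beta}$ is one of the delicate points (alternatively, one invokes the $\mathbb{F}^{\beta}$-Markov property of $\beta$ directly).

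It then remains to evaluate $\mathbf{E}[g(\tau,\beta_t)\mathbb{I}_{\{t<\tau\}}|\beta_t]$, which I would do by freezing $\beta_t=x$: on $\{\beta_t=x\}$ one has $g(\tau,\beta_t)=g(\tau,x)$, and the conditional law of $\tau$ restricted to $\{\tau>t\}$ has density $r\mapsto\phi_t(r,x)$ with respect to $dF$ (the Remark following Corollary \ref{LEM:prop_cond_exp_tau}); so the computation is exactly the one carried out for Corollary \ref{LEM:prop_cond_exp_tau} with the Borel function $g(\cdot)$ replaced by the frozen function $r\mapsto g(r,x)$, and it yields $\int_{(t,\infty)}g(r,\beta_t)\phi_t(r,\beta_t)\,dF(r)$ on $\{t<\tau\}$. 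The remaining verifications are routine: the joint measurability of $(r,x)\mapsto g(r,x)\phi_t(r,x)$ (Fubini--Tonelli, using the continuity of $\varphi_t$ in $x$) and, for a general integrable $g$, the reduction to bounded $g$ via the monotone class theorem and a truncation argument. Besides the passage from $\mathcal{F}_t^{P}$ to $\mathcal{F}_t^{\beta}$, the main obstacle is making the first reduction of the second paragraph precise, i.e.\ that $g(\tau,\beta_t)\mathbb{I}_{\{t<\tau\}}$ agrees $\mathbf{P}$-a.s.\ with a functional of the future $(\beta_s,\,s\ge t)$; once the conditioning is brought down to $\sigma(\beta_t)$, everything reduces to the estimates of Section \ref{sec:Conditional-Expectations}.
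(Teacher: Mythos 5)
Your proof is correct in substance, but on the set $\{t<\tau\}$ it takes a genuinely different route from the paper's. The paper does not invoke the Markov property here at all: it reduces, by the monotone class theorem, to product functions $g(\tau,\beta_t)=g_1(\tau)g_2(\beta_t)$, pulls the $\mathcal{F}_t^{\beta}$-measurable factor $g_2(\beta_t)$ out of the conditional expectation, applies Corollary \ref{LEM:prop_cond_exp_tau} to $g_1(\tau)$, and then extends to integrable $g$ by monotone convergence and $g=g^+-g^-$. You instead condition down to $\sigma(\beta_t)$ via Lemma \ref{lem:basic MKV} and then disintegrate (``freeze'' $\beta_t=x$) against the a posteriori law $\phi_t(r,x)\,dF(r)$. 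Both work, but your route carries two burdens the paper's sidesteps. First, you must show that $g(\tau,\beta_t)\mathbb{I}_{\{t<\tau\}}$ is $\mathbf{P}$-a.s.\ a functional of $(\beta_s,\,s\geq t)$; your recovery of $\tau$ as $\inf\{s>t:\beta_u=0 \text{ for all } u\geq s\}$ is correct (and is in fact the justification the paper silently needs in Lemma \ref{lem:AUXlemma}), but it is an extra argument. Second, your fallback of ``invoking the $\mathbb{F}^{\beta}$-Markov property directly'' would be circular: Theorem \ref{thm:The-process--1} is proved via Theorem \ref{thm:Let--and}, which itself uses the present proposition. So the only non-circular version of your reduction goes through $\mathbb{F}^P$ (Theorem \ref{TEO:The-information-process}), and the upgrade from $\mathcal{F}_t^P$ to $\mathcal{F}_t^{\beta}$ must be handled separately --- e.g.\ by noting that the asserted right-hand side is already $\mathcal{F}_t^P$-measurable and deferring the $\mathcal{F}_t^{\beta}$-version until $\mathbb{F}^P=\mathbb{F}^{\beta}$ is established (to be fair, the paper's own proof quietly applies Corollary \ref{LEM:prop_cond_exp_tau}, stated for $\mathcal{F}_t^P$, with $\mathcal{F}_t^{\beta}$ in its place, so it faces the same point). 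A last small caveat: your dismissal of $t=0$ via the triviality of $\mathcal{F}_0^{\beta}$ also forward-references the zero--one law proved only in Section \ref{sec:The-Markov-property2}, and at $t=0$ the densities $\varphi_t$ degenerate, so that case needs a separate convention rather than being ``trivial''.
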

\begin{proof}
The statement is clear on the set $\{ \tau\leq t\} $.
On the set $\{ t<\tau\}$, we first prove it for bounded measurable functions $g$ and, by a monotone class argument, it suffices to consider functions $g$ of the form $g(\tau,\beta_{t})=g_{1}(\tau)g_{2}(\beta_{t})$ where $g_1$ and $g_2$ are bounded measurable functions. Then we have that
\begin{align*}
\mathbf{E}\left[g\left(\tau,\beta_{t}\right)|\mathcal{F}_{t}^{\beta}\right]\mathbb{I}_{\left\{ t<\tau\right\} } & =\frac{\int_{\left(t,+\infty\right)}g_{1}\left(r\right)\varphi_{t}\left(r,\beta_{t}\right)dF(r)}{\int_{\left(t,+\infty\right)}\varphi_{t}\left(v,\beta_{t}\right)dF(v)}\mathbb{I}_{\left\{ t<\tau\right\} }\,g_{2}\left(\beta_{t}\right)\\
 & =\int_{\left(t,+\infty\right)}g\left(r,\beta_{t}\right)\phi_{t}\left(r,\beta_{t}\right)dF\left(r\right)\mathbb{I}_{\left\{ t<\tau\right\} }.
\end{align*}
If $g$ is a nonnegative measurable function, we can apply the above result to the functions $g_N:=g\wedge N$
and  using the monotone convergence theorem we obtain the asserted equality for $g$. Finally, in the general case, the result is true for the positive and negative parts $g^+$ and $g^-$ of $g$ and for $g=g^+-g^-$ the equality follows from the linearity of both sides. 
\end{proof}
We can now state the main result of this section.
\begin{thm}
\label{thm:Let--and} Let $0<t<u$ and $g$ be a measurable function
defined on $(0,+\infty)\times\mathbb{R}$ such that $\mathbf{E}\left[|g(\tau,\,\beta_{u})|\right]<+\infty$.
Then, $\mathbf{P}$-a.s.
\begin{align*}
\lefteqn{\mathbf{E}\left[g\left(\tau,\,\beta_{u}\right)|\mathcal{F}_{t}^{P}\right]=\mathbf{E}\left[g\left(\tau,\,\beta_{u}\right)|\beta_t\right]}\\
=&\;g\left(\tau,0\right)\mathbb{I}_{\left\{ \tau\leq t\right\} }+\int_{\left(t,u\right]}g\left(r,0\right)\phi_{t}\left(r,\beta_{t}\right)dF\left(r\right)\,\mathbb{I}_{\left\{ t<\tau\right\}}\\
&+\!\!\int\limits_{\left(u,+\infty\right)}\!\!\int\limits_{\mathbb{R}}g\left(r,y\right)p\big(\frac{r-u}{r-t}\left(u-t\right),\, y,\,\frac{r-u}{r-t}\beta_{t}\big)dy\phi_{t}\left(r,\beta_{t}\right)dF\left(r\right)\mathbb{I}_{\left\{ t<\tau\right\} }\,, \label{eq:prova1}
\end{align*}
where $p(t,\cdot,y)$ is the Gaussian density with mean $y$
and variance $t$.
\end{thm}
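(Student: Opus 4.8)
The natural strategy is to split along the complementary events $\{\tau\le t\}$ and $\{t<\tau\}$, exactly as in the proofs of Theorem~\ref{lem:PRE-theo-4.3} and Proposition~\ref{cor:Let--be-BIS}, and then to insert the Brownian-bridge conditional-expectation formula~(\ref{eq:cond exp ext bb}) into Lemma~\ref{lem:AUXlemma}. On $\{\tau\le t\}$ we have $\tau\le t<u$, so by property~(v) $\beta_u=\beta^{\tau}_u=\beta^{\tau}_{\tau}=0$ and hence $g(\tau,\beta_u)\mathbb{I}_{\{\tau\le t\}}=g(\tau,0)\mathbb{I}_{\{\tau\le t\}}$; since $\tau$ is an $\mathbb{F}^{P}$-stopping time (Proposition~\ref{lem:For-all-,}) this random variable equals $g(\tau\wedge t,0)\mathbb{I}_{\{\tau\le t\}}$ and is $\mathcal{F}^{P}_{t}$-measurable, so $\mathbf{E}[g(\tau,\beta_u)|\mathcal{F}^{P}_t]\mathbb{I}_{\{\tau\le t\}}=g(\tau,0)\mathbb{I}_{\{\tau\le t\}}$, which is the first summand of the claimed formula; recall that $\{\tau\le t\}=\{\beta_t=0\}$ $\mathbf{P}$-a.s.

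On $\{t<\tau\}$ I would start from Lemma~\ref{lem:AUXlemma}, which gives $\mathbf{P}$-a.s.
\[
\mathbf{E}\big[g(\tau,\beta_u)\,|\,\mathcal{F}^{P}_t\big]\mathbb{I}_{\{t<\tau\}}=\mathbf{E}\big[g(\tau,\beta_u)\,|\,\beta_t\big]\mathbb{I}_{\{t<\tau\}}=\mathbf{E}\Big[\big(\mathbf{E}[g(r,\beta^{r}_u)\,|\,\beta^{r}_t]\big)_{r=\tau}\,\Big|\,\beta_t\Big]\mathbb{I}_{\{t<\tau\}},
\]
so in particular the asserted identity $\mathbf{E}[g(\tau,\beta_u)|\mathcal{F}^{P}_t]=\mathbf{E}[g(\tau,\beta_u)|\beta_t]$ already holds on $\{t<\tau\}$. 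Applying~(\ref{eq:cond exp ext bb}) to the section $y\mapsto g(r,y)$ and setting
\[
\Psi(r,x):=\mathbf{E}\big[g(r,\beta^{r}_u)\,\big|\,\beta^{r}_t=x\big]=
\begin{cases}
g(r,0), & 0<r\le u,\\[4pt]
\displaystyle\int_{\mathbb{R}}g(r,y)\,p\Big(\tfrac{r-u}{r-t}(u-t),\,y,\,\tfrac{r-u}{r-t}x\Big)\,dy, & u<r,
\end{cases}
\]
we obtain, since $\beta^{\tau}_t=\beta_t$, that $\big(\mathbf{E}[g(r,\beta^{r}_u)|\beta^{r}_t]\big)_{r=\tau}=\Psi(\tau,\beta_t)$.

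It then remains to evaluate $\mathbf{E}[\Psi(\tau,\beta_t)|\beta_t]\,\mathbb{I}_{\{t<\tau\}}$, which is precisely what Proposition~\ref{cor:Let--be-BIS} delivers, applied to the function $h:=\Psi$ (jointly measurable, with $\mathbf{E}[|\Psi(\tau,\beta_t)|]<+\infty$ by Jensen's inequality and Lemma~\ref{lem:AUXlemma} run with $|g|$): it gives $\mathbf{E}[\Psi(\tau,\beta_t)|\mathcal{F}^{\beta}_t]\mathbb{I}_{\{t<\tau\}}=\int_{(t,+\infty)}\Psi(r,\beta_t)\,\phi_t(r,\beta_t)\,dF(r)\,\mathbb{I}_{\{t<\tau\}}$. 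The right-hand side, being a function of $\beta_t$ multiplied by $\mathbb{I}_{\{t<\tau\}}=\mathbb{I}_{\{\beta_t\ne0\}}$ ($\mathbf{P}$-a.s.), is $\sigma(\beta_t)\vee\mathcal{N}_P$-measurable, so the tower property identifies $\mathbf{E}[\Psi(\tau,\beta_t)|\beta_t]\mathbb{I}_{\{t<\tau\}}$ with the same expression. Splitting $\int_{(t,+\infty)}=\int_{(t,u]}+\int_{(u,+\infty)}$ and inserting the two cases of $\Psi$ then reproduces the second and third summands of the statement; together with the contribution on $\{\tau\le t\}$ this completes the proof.

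The individual steps are mostly routine substitution; the points requiring care are (a) the joint measurability of $\Psi$ in $(r,x)$ and the integrability bound $\mathbf{E}[|\Psi(\tau,\beta_t)|\mathbb{I}_{\{t<\tau\}}]\le\mathbf{E}[|g(\tau,\beta_u)|\mathbb{I}_{\{t<\tau\}}]<+\infty$, which legitimize the application of Proposition~\ref{cor:Let--be-BIS} to $h=\Psi$; (b) the bookkeeping of the ``$(\cdot)_{r=\tau}$'' evaluation, which is packaged in Lemma~\ref{lem:AUXlemma} but must be combined with the identity $\beta^{\tau}_t=\beta_t$; and (c) matching the $F$-integration ranges with the two cases of~(\ref{eq:cond exp ext bb}) at the boundary $r=u$ (assigned to the $(t,u]$-piece, in accordance with the ``$r\le u$'' case). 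I expect (a) to be the most tedious point, though still routine.
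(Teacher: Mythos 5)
Your proposal is correct and follows essentially the same route as the paper: split on $\{\tau\le t\}$ versus $\{t<\tau\}$, invoke Lemma \ref{lem:AUXlemma} to reduce to the bridge conditional expectation \eqref{eq:cond exp ext bb}, and evaluate via Proposition \ref{cor:Let--be-BIS}. The only (cosmetic) difference is that you package both cases of \eqref{eq:cond exp ext bb} into a single function $\Psi$ and apply Proposition \ref{cor:Let--be-BIS} once, splitting the $F$-integral at $u$ afterwards, whereas the paper first separates $\mathbb{I}_{\{t<\tau\le u\}}$ (handled by Corollary \ref{LEM:prop_cond_exp_tau}) from $\mathbb{I}_{\{u<\tau\}}$ (handled via the function $G_{t,u}$); your added care about the measurability and integrability of $\Psi$ and about passing from $\mathcal{F}^{\beta}_t$ to $\sigma(\beta_t)$ is sound.
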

\begin{proof}
On the set $\{ \tau\leq t\} $ the statement is a consequence
of the fact that $\tau$ is an $\mathbb{F}^{P}$-stopping time and
that $\beta_{u}=0$ on $\{\tau\leq t<u\}$. On the set $\{ t<\tau\}$, from Lemma \ref{lem:AUXlemma} we have
\begin{align*}
\mathbf{E}\left[g\left(\tau,\,\beta_{u}\right)|\mathcal{F}_{t}^{P}\right]\mathbb{I}_{\left\{ t<\tau\right\} } & =\mathbf{E}\left[g\left(\tau,\,\beta_{u}\right)|\beta_{t}\right]\mathbb{I}_{\left\{ t<\tau\right\} }\\
 & =\mathbf{E}\left[g\left(\tau,0\right)\mathbb{I}_{\left\{ t<\tau\leq u\right\} }|\beta_{t}\right]+\mathbf{E}\left[g(\tau,\beta_{u})\mathbb{I}_{\left\{ u<\tau\right\} }|\beta_{t}\right]\,,
\end{align*}
$\mathbf{P}$-a.s. We remark that due to Corollary \ref{LEM:prop_cond_exp_tau}
\[
\mathbf{E}\left[g\left(\tau,0\right)\mathbb{I}_{\left\{ t<\tau\leq u\right\} }|\beta_{t}\right]=\int_{\left(t,u\right]}g\left(r,0\right)\phi_{t}\left(r,\beta_{t}\right)dF\left(r\right)\,\mathbb{I}_{\{t<\tau\}}\,.
\]
On the other hand, from (\ref{eq:cond exp ext bb}), for $t<u<r$,
\begin{align}
\mathbf{E}\left[g\left(r,\beta_{u}^{r}\right)|\beta_{t}^{r}=x\right]&=\int_{\mathbb{R}}g\left(r,y\right)p\left(\frac{r-u}{r-t}\left(u-t\right),\,y,\,\frac{r-u}{r-t}x\right)dy\nonumber\\
&=:G_{t,u}\left(r,x\right)\,. \label{eq:prova3}
\end{align}
It follows from Lemma \ref{lem:AUXlemma}
and from (\ref{eq:prova3}) that, on $\{ t<\tau\}$  $\mathbf{P}$-a.s.,
\begin{align*}
\mathbf{E}\left[g(\tau,\beta_{u})\mathbb{I}_{\left\{ u<\tau\right\} }|\beta_{t}\right] & =\mathbf{E}\left[\left(\mathbf{E}\left[g\left(r,\beta_{u}^{r}\right)\mathbb{I}_{\left\{ u<r\right\} }|\beta_{t}^{r}\right]\right)_{r=\tau}|\beta_{t}\right]\\
 & =\mathbf{E}\left[\left(G_{t,u}\left(r,\beta_{t}^{r}\right)\right)_{r=\tau}|\beta_{t}\right]\\
 & =\mathbf{E}\left[G_{t,u}\left(\tau,\beta_{t}\right)|\beta_{t}\right]\\
 & =\int_{\left(u,+\infty\right)}G_{t,u}\left(r,\beta_{t}\right)\phi_{t}\left(r,\beta_{t}\right)dF\left(r\right),
\end{align*}
where the latter equality follows from Proposition \ref{cor:Let--be-BIS}.
\end{proof}
\begin{ex}
As an immediate consequence of Theorem \ref{thm:Let--and}, for $t<u$, we can calculate the conditional expectation of $\beta_{u}$ given $\beta_{t}$ by
\[
\mathbf{E}\left[\beta_{u}|\mathcal{F}_{t}^{P}\right]=\beta_{t}\int_{\left(u,+\infty\right)}\frac{r-u}{r-t}\,\phi_{t}\left(r,\beta_{t}\right)dF\left(r\right)\mathbb{I}_{\left\{ t<\tau\right\} },\; \mathbf{P}\textrm{-a.s.},
\]
and the conditional distribution of $\beta_u$ given $\beta_t$: For $\Gamma\in\mathcal{B}(\mathbb{R})$, $\mathbf{P}$-a.s.,  
\begin{multline}
\mathbf{P}\left(\beta_{u}\in\Gamma|\mathcal{F}_{t}^{P}\right)=\mathbb{I}_{\left\{ 0\in\Gamma\right\}}\,\mathbb{I}_{\left\{ \tau\leq t\right\} } +\mathbb{I}_{\left\{ 0\in\Gamma\right\} } \int_{\left(t,u\right]}\phi_{t}\left(r,\beta_{t}\right)dF\left(r\right)\,\mathbb{I}_{\left\{ \tau\leq t\right\} }\\
+\int_{\left(u,+\infty\right)}\int_{\Gamma}p\big(\frac{r-u}{r-t}\left(u-t\right),\, y,\,\frac{r-u}{r-t}\beta_{t}\big)dy\,\phi_{t}\left(r,\beta_{t}\right)dF\left(r\right)\mathbb{I}_{\left\{ t<\tau\right\} }.\label{eq:prova2}
\end{multline}
\end{ex}
\begin{rem}
\label{rem:Strong_Inhom_MK} From the factor $\left(r-u\right)/\left(r-t\right)$
in (\ref{eq:prova2}) we see that the process $\beta$ cannot
be a homogeneous Markov process because $\mathbf{P}(\beta_{u}\in\Gamma|\mathcal{F}_{t}^{P})$
does not depend only on $u-t$ and $(\beta_{t},\Gamma)$. 
\end{rem}
\section{Markov Property\label{sec:The-Markov-property2}}
\noindent In this section we are going to strengthen Theorem \ref{TEO:The-information-process} on the Markov property of the information process $\beta$. We shall prove that $\beta$ is not only a Markov process with respect to the filtration $\mathbb{F}^0$ (or $\mathbb{F}^P$) but also with respect to $\mathbb{F}^\beta$, the smallest filtration containing  $\mathbb{F}^0$ and satisfying the usual conditions. As an important consequence, it turns out that the filtrations $\mathbb{F}^P$ and $\mathbb{F}^\beta$ are equal which amounts to saying that the filtration $\mathbb{F}^P$ is right-continuous. The result is stated in the following theorem.
\begin{thm}
\label{thm:The-process--1}The process $\beta$ is a Markov process
with respect to the filtration $\mathbb{F}^{\beta}$, i.e.,
\[
\mathbf{E}\left[g\left(\beta_{u}\right)|\mathcal{F}_{t}^{\beta}\right]=\mathbf{E}\left[g\left(\beta_{u}\right)|\beta_{t}\right],\;\mathbf{P}\textrm{-a.s.}
\]
for all $0\leq t<u$ and all measurable functions $g$ such that $g(\beta_{u})$ is integrable.
\end{thm}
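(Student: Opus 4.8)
The plan is to upgrade the $\mathbb{F}^0$-Markov property (Theorem \ref{TEO:The-information-process}) to an $\mathbb{F}^\beta$-Markov property by a right-continuity argument. Since $\mathcal{F}_t^\beta = \mathcal{F}_{t+}^0 \vee \mathcal{N}_P = \bigcap_{s>t}\mathcal{F}_s^P$, the natural approach is to start from the identity on $\mathcal{F}_s^P$ for $s>t$ and pass to the limit $s\downarrow t$. First I would fix $0\le t<u$ and a bounded continuous $g$ (reducing to this case by a monotone class argument at the end). For $t<s<u$, the already-established Markov property with respect to $\mathbb{F}^P$ gives $\mathbf{E}[g(\beta_u)\mid\mathcal{F}_s^P] = \mathbf{E}[g(\beta_u)\mid\beta_s]$, $\mathbf{P}$-a.s. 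Taking $\mathbf{E}[\,\cdot\mid\mathcal{F}_t^\beta]$ on both sides and using $\mathcal{F}_t^\beta\subseteq\mathcal{F}_s^P$, the left-hand side becomes $\mathbf{E}[g(\beta_u)\mid\mathcal{F}_t^\beta]$, so it remains to identify $\lim_{s\downarrow t}\mathbf{E}\big[\mathbf{E}[g(\beta_u)\mid\beta_s]\mid\mathcal{F}_t^\beta\big]$ and to show it equals $\mathbf{E}[g(\beta_u)\mid\beta_t]$.

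The key computational input is the explicit conditional-law formula. By Theorem \ref{thm:Let--and} (applied with the function $(r,y)\mapsto g(y)$, or directly via equation (\ref{eq:prova2})), the conditional distribution of $\beta_u$ given $\mathcal{F}_s^P$ is an explicit functional of $\beta_s$ and of the events $\{\tau\le s\}$, $\{s<\tau\}$: namely $\mathbf{E}[g(\beta_u)\mid\beta_s] = g(0)\mathbb{I}_{\{\tau\le s\}} + \big(g(0)\int_{(s,u]}\phi_s(r,\beta_s)dF(r) + \int_{(u,\infty)}\!\int_{\mathbb{R}}g(y)\,p(\tfrac{r-u}{r-s}(u-s),y,\tfrac{r-u}{r-s}\beta_s)\,dy\,\phi_s(r,\beta_s)\,dF(r)\big)\mathbb{I}_{\{s<\tau\}}$. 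As $s\downarrow t$: on $\{\tau\le t\}$ (which is in $\mathcal{F}_t^\beta$ up to null sets, since $\tau$ is an $\mathbb{F}^\beta$-stopping time by the Corollary to Proposition \ref{lem:For-all-,}) the right-hand side is eventually $g(0)\mathbb{I}_{\{\tau\le s\}}\to g(0)$; on $\{t<\tau\}$, the paths $s\mapsto\beta_s$ are continuous so $\beta_s\to\beta_t$, and the kernels $p(\tfrac{r-u}{r-s}(u-s),\cdot,\tfrac{r-u}{r-s}\beta_s)$ and the densities $\phi_s(r,\cdot)$ converge; combined with $F((t,s])\to 0$, dominated convergence yields that $\mathbf{E}[g(\beta_u)\mid\beta_s]\to\mathbf{E}[g(\beta_u)\mid\beta_t]$ $\mathbf{P}$-a.s. on $\{t<\tau\}$ (and boundedly). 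Since $\mathbf{E}[g(\beta_u)\mid\beta_t]$ is already $\mathcal{F}_t^\beta$-measurable, taking conditional expectation given $\mathcal{F}_t^\beta$ and passing to the limit (justified by boundedness of $g$) gives $\mathbf{E}[g(\beta_u)\mid\mathcal{F}_t^\beta] = \mathbf{E}[g(\beta_u)\mid\beta_t]$.

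I expect the main obstacle to be the interchange of the limit $s\downarrow t$ with the outer conditional expectation, done rigorously. One clean way to handle it: rather than taking $\mathbf{E}[\,\cdot\mid\mathcal{F}_t^\beta]$ and passing to the limit inside, observe that $s\mapsto\mathbf{E}[g(\beta_u)\mid\mathcal{F}_s^P]$ is a (backward) martingale as $s\downarrow t$ along a countable sequence, so by the martingale convergence theorem it converges a.s. and in $L^1$ to $\mathbf{E}[g(\beta_u)\mid\bigcap_{s>t}\mathcal{F}_s^P] = \mathbf{E}[g(\beta_u)\mid\mathcal{F}_t^\beta]$; on the other hand the same sequence equals $\mathbf{E}[g(\beta_u)\mid\beta_s]$, which by the path-continuity and dominated-convergence argument above tends a.s. to $\mathbf{E}[g(\beta_u)\mid\beta_t]$. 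Matching the two limits gives the result for bounded continuous $g$. Finally, a routine monotone class / approximation argument (first bounded measurable $g$, then nonnegative, then integrable via positive and negative parts, exactly as in the proofs of Corollary \ref{LEM:prop_cond_exp_tau} and Proposition \ref{cor:Let--be-BIS}) extends the identity to all $g$ with $g(\beta_u)$ integrable. The stated consequence $\mathbb{F}^P=\mathbb{F}^\beta$ then follows because the $\mathbb{F}^\beta$-Markov property lets one verify that $\mathcal{F}_{t+}^0\subseteq\mathcal{F}_t^P$: any $\mathcal{F}_t^\beta$-measurable bounded variable can be approximated by functionals of $(\beta_{s_1},\dots,\beta_{s_k},\beta_u)$ with $s_i\le t<u$, whose conditional expectation given $\mathcal{F}_t^P$ already lies in $\mathcal{F}_t^P$.
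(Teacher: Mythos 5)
Your argument for $t>0$ is essentially the paper's: a backward martingale convergence along $t_n\downarrow t$ identifies $\lim_n\mathbf{E}[g(\beta_u)|\mathcal{F}_{t_n}^P]$ with $\mathbf{E}[g(\beta_u)|\mathcal{F}_t^\beta]$, and the explicit formula from Theorem \ref{thm:Let--and} together with path continuity and dominated convergence identifies the same limit with $\mathbf{E}[g(\beta_u)|\beta_t]$. For that case your sketch is sound, although you should make explicit that the dominated convergence of $\phi_{t_n}(r,\beta_{t_n})$ rests on $\beta_t\neq 0$ $\mathbf{P}$-a.s.\ on $\{t<\tau\}$: the map $(t,r,x)\mapsto\varphi_t(r,x)$ is continuous only on $(0,+\infty)\times[0,+\infty]\times\mathbb{R}\setminus\{0\}$, and it is this fact that gives both the pointwise convergence and the $\omega$-dependent uniform bound.

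The genuine gap is at $t=0$, which your proposal treats as if it were the same as $t>0$. There $\beta_0=0$, so the continuity argument for $\varphi_{t_n}(r,\beta_{t_n})$ breaks down completely: $\varphi_{t_n}(r,x)=p(t_n(r-t_n)/r,x,0)$ has variance tending to $0$, so $\varphi_{t_n}(r,\beta_{t_n})$ does not converge to any finite nonzero limit (it carries a factor $(2\pi t_n)^{-1/2}$), and the claim that ``the densities $\phi_s(r,\cdot)$ converge'' is not justified by continuity. What is true is that the \emph{ratio} $\phi_{t_n}(r,\beta_{t_n})$ converges to $1$, but proving this requires cancelling the singular factor $(2\pi t_n)^{-1/2}\exp[-\beta_{t_n}^2 r/(2t_n(r-t_n))]$ between numerator and denominator and controlling the remaining exponential $\exp[\beta_{t_n}^2(s-r)/(2(r-t_n)(s-t_n))]$; this is the content of Lemma \ref{lem:On-the-set-2} and it needs the hypothesis $\mathbf{P}(\tau>\varepsilon)=1$ to bound the denominator away from $0$. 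Removing that hypothesis requires a further localization step: the paper introduces the processes $^{\varepsilon}\!\beta$ built from $\tau\vee\varepsilon$, shows $\mathcal{F}_{0+}^{\varepsilon}$ is trivial, and transfers this to $\mathcal{F}_{0+}^0\vee\mathcal{N}_P$ via trace $\sigma$-fields on $\{\tau>\varepsilon\}$, letting $\varepsilon\downarrow 0$. None of this is visible in your sketch, and without it the case $t=0$ (equivalently, the $\mathbf{P}$-triviality of $\mathcal{F}_0^\beta$) is unproved.
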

\begin{proof}
The proof is divided into two main parts. In the first one we prove
the statement of the above theorem for $t>0$, while in the second
part we consider the case $t=0$. Throughout the proof we can assume
without loss of generality that the function $g$ is continuous and
bounded by some constant $M\in\mathbb{R}_+$. For the first part of the proof, let $t>0$ be a strictly positive
real number and let $(t_{n})_{n\in\mathbb{N}}$ be a decreasing
sequence converging to $t$ from above: $0<t<...<t_{n+1}<t_{n}<u,\: t_{n}\downarrow t$ as $n\rightarrow\infty$.
From the definition of $\mathcal{F}_{t}^{\beta}$ we have that 
$\mathcal{F}_{t}^{\beta}=\bigcap_{n\in\mathbb{N}}\mathcal{F}_{t_{n}}^{P}$
where we recall that $\mathcal{F}_{v}^{P}=\sigma(\beta_{s},\,0\leq s\leq v)\vee\mathcal{N}_{P}$, $v\geq 0$.
Consequently,
\[
\mathbf{E}\left[g\left(\beta_{u}\right)|\mathcal{F}_{t}^{\beta}\right]=\lim_{n\rightarrow\infty}\mathbf{E}\left[g\left(\beta_{u}\right)|\mathcal{F}_{t_{n}}^{P}\right]\,, \; \mathbf{P}\textrm{-a.s.}
\]
From Theorem \ref{thm:Let--and} and the definition of $G_{t_n,u}$ by \eqref{eq:prova3} we know that, as $t_{n}<u$, $\mathbf{P}$-a.s.
\begin{eqnarray}
\nonumber\lefteqn{\mathbf{E}\left[g\left(\beta_{u}\right)|\mathcal{F}_{t_{n}}^{P}\right]=\mathbf{E}\left[g\left(\beta_{u}\right)|\beta_{t_{n}}\right]}\\
&=&g\left(0\right)\mathbb{I}_{\left\{ \tau\leq t_{n}\right\} }+g\left(0\right)\int_{\left(t_{n},u\right]}\phi_{t_{n}}\left(r,\beta_{t_{n}}\right)dF\left(r\right)\,\mathbb{I}_{\left\{ t_{n}<\tau\right\} }\label{eq:equiv1}\\
\nonumber&&+\int_{\left(u,+\infty\right)}G_{t_n,u}\left(r,\beta_{t_n}\right)\,\phi_{t_{n}}\left(r,\beta_{t_{n}}\right)dF\left(r\right)\,\mathbb{I}_{\left\{ t_{n}<\tau\right\} }\,.
\end{eqnarray}
We want to prove that
\[
\lim_{n\rightarrow\infty}\mathbf{E}\left[g\left(\beta_{u}\right)|\mathcal{F}_{t_{n}}^{P}\right]=\mathbf{E}\left[g\left(\beta_{u}\right)|\beta_{t}\right],\;\mathbf{P}\textrm{-a.s.}
\]
Using (\ref{eq:equiv1}) and Theorem \ref{thm:Let--and} we see
that this latter relation holds true if the following two identities are satisfied, $\mathbf{P}$-a.s. on $\{t<\tau\}$:
\begin{align}
\lim_{n\rightarrow\infty}\int_{\left(t_{n},u\right]}\phi_{t_{n}}\left(r,\beta_{t_{n}}\right)dF\left(r\right)&= \int_{\left(t,u\right]}\phi_{t}\left(r,\beta_{t}\right)dF\left(r\right)\,,\label{eq:star2}\\
\lim_{n\rightarrow\infty}\int\limits_{\left(u,+\infty\right)}\!\!\!G_{t_n,u}\left(r,\beta_{t_{n}}\right)\phi_{t_{n}}\left(r,\beta_{t_{n}}\right)dF\left(r\right)&=\!\!\! \int\limits_{\left(u,+\infty\right)}\!\!\!G_{t,u}\left(r,\beta_{t}\right)\phi_{t}\left(r,\beta_{t}\right)dF\left(r\right)\,.\label{eq:star3}
\end{align}
Relation (\ref{eq:star2}) can be derived as follows.

\vspace{6pt}
\noindent\textit{Proof of (\ref{eq:star2}).}
Recalling that by (\ref{eq:densitybeta2})
\[
\phi_{t_{n}}\left(r,\beta_{t_{n}}\right)=\frac{\varphi_{t_{n}}\left(r,\beta_{t_{n}}\right)}{{\displaystyle \int_{\left(t_{n},+\infty\right)}}\varphi_{t_{n}}\left(v,\beta_{t_{n}}\right)dF(v)},\quad t_n<r\,,
\]
the integral on the left-hand side
of (\ref{eq:star2}) can be rewritten as 
\[
\int_{\left(t_{n},u\right]}\phi_{t_{n}}\left(r,\beta_{t_{n}}\right)dF\left(r\right)=\frac{{\displaystyle \int_{\left(t_{n},u\right]}}\varphi_{t_{n}}\left(r,\beta_{t_{n}}\right)dF(r)}{{\displaystyle \int_{\left(t_{n},+\infty\right)}}\varphi_{t_{n}}\left(v,\beta_{t_{n}}\right)dF(v)}\,.
\]
The plan is to apply Lebesgue's bounded convergence theorem to the numerator and the denominator of the above expression. To this end we have to prove $\mathbf{P}$-a.s. pointwise convergence and uniform boundedness of the integrand $\varphi_{t_n}(\cdot, \beta_{t_n})$  $\mathbf{P}$-a.s. We begin by focusing our attention on the function
$(t,r,x)\mapsto\varphi_{t}(r,x)$ defined by \eqref{eq:bbdensity}, which is continuous on $(0,+\infty)\times[0,+\infty)\times\mathbb{R}\backslash\{ 0\}$.
Setting $\varphi_{t}(+\infty,x):=p(t,x,0)$
for every $t>0$ and $x\in\mathbb{R},$ we see that the resulting
function $(t,r,x)\mapsto\varphi_{t}(r,x)$, now defined on 
$(0,+\infty)\times[0,+\infty]\times\mathbb{R}\backslash\{ 0\}$,
is continuous, too. Hence $\lim_{n\rightarrow \infty}\varphi_{t_{n}}(r,\beta_{t_{n}})=\varphi_{t}(r,\beta_{t})$, $\mathbf{P}$-a.s. on $\tau>t$, providing pointwise convergence. For this we note that $\beta_{t}\neq0$ $\mathbf{P}$-a.s. on the set $\{t<\tau\}$. Now we fix $\omega\in\Omega$ such that $t<\tau(\omega)$ and $\beta_{t}(\omega)\neq0$.
Then the set $\{ t_{n}:\, n\in\mathbb{N}\} \times(t,+\infty]\times\{ \beta_{t_{n}}(\omega):\, n\in\mathbb{N}\} $
is obviously contained in a compact subset of $(0,+\infty)\times[0,+\infty]\times\mathbb{R}\backslash\{0\}$
(depending on $\omega$). This implies that $\varphi_{t_{n}}(r,\beta_{t_{n}}(\omega))$
is bounded (by a constant depending on $\omega$). Using Lebesgue's bounded convergence theorem we can conclude 
\[
\lim_{n\rightarrow\infty}\int_{\left(t_{n},u\right]}\varphi_{t_{n}}\left(r,\beta_{t_{n}}\left(\omega\right)\right)dF\left(r\right)=\int_{\left(t,u\right]}\varphi_{t}\left(r,\beta_{t}\left(\omega\right)\right)dF\left(r\right).
\]
Consequently, we have proven that 
\[
\lim_{n\rightarrow\infty}\int_{\left(t_{n},u\right]}\varphi_{t_{n}}\left(r,\beta_{t_{n}}\right)dF\left(r\right)=\int_{\left(t,u\right]}\varphi_{t}\left(r,\beta_{t}\right)dF\left(r\right)\,\textrm{on }\left\{ t<\tau\right\} \;\mathbf{P}\textrm{-a.s.}
\]
This is also valid for intervals $(t_{n},+\infty)$and
$(t,+\infty)$ (instead of $(t_{n},u]$ and
$(t,u]$). Relation (\ref{eq:star2}) follows immediately.

Let us conclude the first part of the proof by showing that equality
(\ref{eq:star3}) is indeed true.

\vspace{6pt} 
\noindent\textit{Proof of (\ref{eq:star3}). \ }
Recall that $t<t_{n}<u$, $n\in\mathbb{N}$. We start by
noting that the function 
\[
y\mapsto p\left(\frac{r-u}{r-t_{n}}\left(u-t_{n}\right),\, y,\,\frac{r-u}{r-t_{n}}\beta_{t_{n}}\right)
\]
is a density on $\mathbb{R}$ for all $n$. Denoting by $N(\mu,\sigma^{2})$ the normal distribution with expectation $\mu$ and variance $\sigma^{2}$, it follows that the probability measures $N(\frac{r-u}{r-t_{n}}\beta_{t_{n}}(\omega),\frac{r-u}{r-t_{n}}(u-t_{n}))$ converge weakly to $N(\frac{r-u}{r-t}\beta_{t}(\omega),\frac{r-u}{r-t}(u-t))$.
Since the function $g$ is bounded by $M$, we have that
\begin{align}
|G_{t_n,u}(r,\beta_{t_n}(\omega))|&=\left|\int_{\mathbb{R}}g\left(y\right)p\left(\frac{r-u}{r-t_{n}}\left(u-t_{n}\right),\, y,\,\frac{r-u}{r-t_{n}}\beta_{t_{n}\left(\omega\right)}\right)dy\right|\nonumber\\
&\leq M<+\infty\,.\label{eq:HJ1}
\end{align}
Furthermore,
\begin{align}
\lefteqn{\lim_{n\rightarrow\infty} G_{t_n,u}\left(r,\beta_{t_n}(\omega)\right)\nonumber}\\
&=\lim_{n\rightarrow\infty}\int_{\mathbb{R}}g\left(y\right)p\left(\frac{r-u}{r-t_{n}}\left(u-t_{n}\right),\, y,\,\frac{r-u}{r-t_{n}}\beta_{t_{n}}\left(\omega\right)\right)dy\nonumber\\
&=\int_{\mathbb{R}}g\left(y\right)p\left(\frac{r-u}{r-t}\left(u-t\right),\,y,\,\frac{r-u}{r-t}\beta_{t}\left(\omega\right)\right)dy
=G_{t,u}\left(r,\beta_{t}(\omega)\right)\label{eq:HJ2}
\end{align}
immediately follows from the assumption that $g$ is bounded and continuous combined with the weak convergence of the Gaussian measures stated above. Now (\ref{eq:star3}) can be derived using Lebesgue's bounded convergence theorem and the properties of $G_{t_n,u}(r,\beta_{t_n}(\omega))$
and $\varphi_{t_{n}}(r,\beta_{t_{n}}(\omega))$ (and hence $\phi_{t_{n}}(r,\beta_{t_{n}}(\omega))$)
of $\mathbf{P}$-a.s. boundedness and pointwise convergence verified above.

\smallskip
The first part of the proof of the theorem is now finished. In the second part of the proof we consider the case $t=0$ which is divided into two steps. In the first step we assume
that there exists $\varepsilon>0$ such that $\mathbf{P}(\tau>\varepsilon)=1$ and in the second step we will drop this condition. 

Let us assume that there exists $\varepsilon>0$ such that $\mathbf{P}(\tau>\varepsilon)=1$.
Let $(t_{n})_{n\in\mathbb{N}}$ be a decreasing sequence
of strictly positive real numbers converging to $0$: $0<...<t_{n+1}<t_{n},\: t_{n}\downarrow0$ as $n\rightarrow\infty$.
Without loss of generality we assume $t_{n}<\varepsilon$ for all
$n\in\mathbb{N}$. Then (\ref{eq:densitybeta2})
can be rewritten as follows:
\begin{equation}\label{Phi-rewritten}
\phi_{t_{n}}\left(r,\beta_{t_{n}}\right)=\frac{\frac{1}{\sqrt{2\pi t_{n}}}\sqrt{\frac{r}{r-t_{n}}}\exp\left[-\frac{\beta_{t_{n}}^{2}r}{2t_{n}\left(r-t_{n}\right)}\right]}{{\displaystyle \int_{\left(\varepsilon,+\infty\right)}}\frac{1}{\sqrt{2\pi t_{n}}}\sqrt{\frac{s}{s-t_{n}}}\exp\left[-\frac{\beta_{t_{n}}^{2}s}{2t_{n}\left(s-t_{n}\right)}\right]dF\left(s\right)}\,\mathbb{I}_{\left(t_n,+\infty\right)}\left(r\right)\,.
\end{equation}
We have the following auxiliary result:
\begin{lem}
\label{lem:On-the-set-2} Suppose that $\mathbf{P}(\tau>\varepsilon)=1$. Then the function $r\mapsto\phi_{t_{n}}(r,\beta_{t_{n}})$
is $\mathbf{P}_\tau$-a.s. uniformly bounded by some constant $K=K(\varepsilon,\omega)<+\infty$ and, for all $r>0$,
\[
\lim_{n\rightarrow\infty}\phi_{t_{n}}\left(r,\beta_{t_{n}}\right)=1\,,\;\mathbf{P}\textrm{-a.s.}
\]
\end{lem}
\begin{proof}
See Appendix \ref{sec:Proofs-of-Section}.
\end{proof}
Now we turn to the proof of the Markov property of $\beta$ with respect to $\mathbb{F}^\beta$ at $t=0$ under the additional assumption that $\mathbf{P}(\tau>\varepsilon)=1$. Since $\mathbf{E}\left[g(\beta_u)|\mathcal{F}^\beta_0\right]= \lim_{n\rightarrow\infty} \mathbf{E}\left[g(\beta_u)|\mathcal{F}^P_{t_n}\right]$, as in the first part, it is sufficient to verify that
\begin{equation}
\lim_{n\rightarrow\infty}\mathbf{E}\left[g\left(\beta_{u}\right)|\mathcal{F}^P_{t_{n}}\right]=\mathbf{E}\left[g\left(\beta_{u}\right)|\beta_{0}\right],\;\mathbf{P}\textrm{-a.s.}\label{eq:equiv 2}
\end{equation}
Using the formula of total probability, Corollary \ref{cor:LEM if--is} and \eqref{eq:bbdensity}, we can calculate
\begin{align*}
\mathbf{E}\left[g\left(\beta_{u}\right)|\beta_{0}\right]&=\mathbf{E}\left[g\left(\beta_{u}\right)\right]\\
&=g\left(0\right)\,F\left(u\right)+\int_{\left(u,+\infty\right)}\int_{\mathbb{R}}g\left(y\right)p\left(\frac{r-u}{r}u,y,0\right)dy\,dF\left(r\right).
\end{align*}
Consequently, recalling (\ref{eq:equiv1}) for computing
the left-hand side of (\ref{eq:equiv 2}) and the definition of $G_{t_n,u}$ and $G_{t,u}$ by \eqref{eq:prova3} and noting the obvious relation $\lim_{n\rightarrow\infty}\mathbb{I}_{\{ \tau\leq t_{n}\} }=0$,
it is sufficient to prove the following two equalities, $\mathbf{P}$-a.s.:
\begin{align}
\lim_{n\rightarrow\infty}\int_{\left(t_{n},u\right]}\phi_{t_{n}}\left(r,\beta_{t_{n}}\right)dF\left(r\right) &=F\left(u\right)\,,\label{eq:star 2 bis}\\
\lim_{n\rightarrow\infty}\int\limits_{\left(u,+\infty\right)}G_{t_n,u}\left(r,\beta_{t_n}\right)\,\phi_{t_{n}}\left(r,\beta_{t_{n}}\right)dF\left(r\right)&=\int\limits_{\left(u,+\infty\right)}G_{t,u}\left(r,\beta_t\right)\,dF\left(r\right)\,.\label{eq:star 3 bis}
\end{align}
The first equality relies on Lemma \ref{lem:On-the-set-2} and Lebesgue's bounded convergence theorem. For verifying the second equality we use Lebesgue's bounded convergence theorem combined with Lemma \ref{lem:On-the-set-2} and the boundedness and pointwise convergence of the sequence $G_{t_n,u}(\cdot,\beta_{t_n})$ (see (\ref{eq:HJ1}) and (\ref{eq:HJ2})).
 
We now turn to the general case where $\tau>0$. Let $\varepsilon>0$ be arbitrary, but fixed. In what follows the process $^{\varepsilon}\!\beta=(^{\varepsilon}\!\beta_{t},\, t\geq0)$ will denote the process defined by $^{\varepsilon}\!\beta_{t}:=
(\beta_{t}^{r})_{r=\tau\vee\varepsilon}$.
The natural filtration of the process $^{\varepsilon}\!\beta$ will
be denoted by $\mathbb{F}^\varepsilon=(\mathcal{F}^\varepsilon_{t})_{t\geq0}$,
where $\mathcal{F}_{t}^\varepsilon :=\sigma(^{\varepsilon}\!\beta_{s},\,0\leq s\leq t)$.
Obviously, for proving the Markov property of $\beta$ with respect to $\mathbb{F}^\beta$ at $t=0$ it is sufficient to show that $\mathcal{F}^\beta_0:=\mathcal{F}^0_{0+}\vee\mathcal{N}_P$ is $\mathbf{P}$-trivial. In order to show that $\mathcal{F}_{0+}^{0}\vee\mathcal{N}_{P}$ is indeed the trivial $\sigma$-algebra, we consider a set $A\in\mathcal{F}_{0+}^{0}$
and we show that if $\mathbf{P}(A)>0$, then $\mathbf{P}(A)=1$.

If $\mathbf{P}(A)>0$, then there exists an $\varepsilon>0$
such that 
$\mathbf{P}(A\cap\{ \tau>\varepsilon\})>0$.
Since $A\in\mathcal{F}_{0+}^{0}$, it follows that
$A\in\mathcal{F}_{u}^{0}$ for all $0<u\leq\varepsilon$
and, consequently, $A\cap\{ \tau>\varepsilon\} \in\mathcal{F}_{u}^{0}|_{\{ \tau>\varepsilon\} }\vee\mathcal{N}_{P}$ where $\mathcal{F}_{u}^{0}|_{\{ \tau>\varepsilon\}}$ denotes the trace $\sigma$-field of $\mathcal{F}_{u}^{0}$ on the set $\tau>\varepsilon$.
Moreover, on the set $\{\tau>\varepsilon\} $, $\beta_{t}={}^{\varepsilon}\!\beta_{t}$
for all $t\geq0$, i.e., $\beta$ and $^{\varepsilon}\beta$ generate
the same trace filtration on $\{\tau>\varepsilon\}$ and, consequently, $A\cap\{ \tau>\varepsilon\} \in\mathcal{F}_{u}^\varepsilon|_{\{ \tau>\varepsilon\} }\vee\mathcal{N}_{P}$. 
Hence, there exists a set $A_{u}\in\mathcal{F}_{u}^\varepsilon$
such that
\begin{equation}
A\cap\left\{ \tau>\varepsilon\right\} =A_{u}\cap\left\{ \tau>\varepsilon\right\} \label{eq:star_epsilon}
\end{equation}
$\mathbf{P}$-a.s., for all $0<u\leq\varepsilon$. Replacing $u$ by $1/n$, for $n\in\mathbb{N}$ sufficiently large, and defining 
$A_{0}:=\limsup_{n\rightarrow\infty}A_{1/n}$, we obtain that $A_{0}\in\mathcal{F}_{0+}^\varepsilon$.
We know from the first step of the second part of the proof that the $\sigma$-field $\mathcal{F}_{0+}^\varepsilon$
is $\mathbf{P}$-trivial and, consequently, $\mathbf{P}(A_{0})\in\{ 0,1\}$. However, from equality (\ref{eq:star_epsilon}) we have that, $\mathbf{P}$-a.s., $A\cap\{ \tau>\varepsilon\} =A_{0}\cap\{\tau>\varepsilon\}$. By hypothesis we have 
$\mathbf{P}(A\cap\{ \tau>\varepsilon\})>0$
and thus $\mathbf{P}(A_{0} \cap\{\tau>\varepsilon\})=\mathbf{P}(A\cap\{ \tau>\varepsilon\})>0$, which implies that $\mathbf{P}(A_{0})=1$ and, consequently, we get that $\mathbf{P}(A\cap\{ \tau>\varepsilon\})=\mathbf{P}(\{ \tau>\varepsilon\})$.
Since $\varepsilon$ is arbitrary we can take the limit for $\varepsilon\downarrow0$, and we obtain that $\mathbf{P}(A)=\mathbf{P}(\Omega)=1$, which ends the proof.
\end{proof}
\begin{cor}
\label{rem:equalitySTANDARDwithCOMPLETED} The filtration $\mathbb{F}^{P}$
satisfies the usual conditions of right-continuity and completeness.
\end{cor}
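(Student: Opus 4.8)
The plan is to derive the corollary from the coincidence $\mathbb{F}^{P}=\mathbb{F}^{\beta}$, which in turn rests on the Markov property of $\beta$ with respect to $\mathbb{F}^{\beta}$ established in Theorem \ref{thm:The-process--1}. Completeness of $\mathbb{F}^{P}$ is immediate from the definition $\mathcal{F}_{t}^{P}=\mathcal{F}_{t}^{0}\vee\mathcal{N}_{P}$. For right-continuity, recall (as already used in the proof of Theorem \ref{thm:The-process--1}) that for a sequence $t_{n}\downarrow t$ one has $\mathcal{F}_{t}^{\beta}=\mathcal{F}_{t+}^{0}\vee\mathcal{N}_{P}=\bigcap_{n\in\mathbb{N}}\mathcal{F}_{t_{n}}^{P}=\mathcal{F}_{t+}^{P}$; hence $\mathcal{F}_{t}^{P}=\mathcal{F}_{t+}^{P}$ will follow once we prove $\mathcal{F}_{t}^{\beta}\subseteq\mathcal{F}_{t}^{P}$, the opposite inclusion being obvious. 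So the whole matter reduces to showing $\mathcal{F}_{t}^{\beta}\subseteq\mathcal{F}_{t}^{P}$ for every $t\geq 0$.

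I would prove this by showing that $\mathbf{E}[Z|\mathcal{F}_{t}^{\beta}]$ admits an $\mathcal{F}_{t}^{P}$-measurable version for every bounded $\mathcal{F}_{\infty}^{0}$-measurable $Z$, where $\mathcal{F}_{\infty}^{0}:=\sigma(\beta_{s},\,s\geq 0)$. Granting this, let $A\in\mathcal{F}_{t}^{\beta}\subseteq\mathcal{F}_{\infty}^{0}\vee\mathcal{N}_{P}$ and pick $A'\in\mathcal{F}_{\infty}^{0}$ with $\mathbb{I}_{A}=\mathbb{I}_{A'}$ $\mathbf{P}$-a.s. Then $\mathbb{I}_{A}=\mathbf{E}[\mathbb{I}_{A}|\mathcal{F}_{t}^{\beta}]=\mathbf{E}[\mathbb{I}_{A'}|\mathcal{F}_{t}^{\beta}]$ $\mathbf{P}$-a.s., so $\mathbb{I}_{A}$ coincides $\mathbf{P}$-a.s. with an $\mathcal{F}_{t}^{P}$-measurable function, and since $\mathcal{F}_{t}^{P}\supseteq\mathcal{N}_{P}$ we conclude $A\in\mathcal{F}_{t}^{P}$, which is the desired inclusion.

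The auxiliary claim is a functional monotone class argument. The family of bounded $\mathcal{F}_{\infty}^{0}$-measurable $Z$ for which $\mathbf{E}[Z|\mathcal{F}_{t}^{\beta}]$ has an $\mathcal{F}_{t}^{P}$-measurable version is a vector space containing the constants and closed under bounded monotone limits, so it suffices to verify it on the $\pi$-system of cylinder events $\{\beta_{s_{1}}\in B_{1},\dots,\beta_{s_{m}}\in B_{m}\}$, $B_{i}\in\mathcal{B}(\mathbb{R})$, which generates $\mathcal{F}_{\infty}^{0}$. Splitting the times $s_{i}$ into those $\leq t$ and those $>t$, the indicator of such an event factors as $U\cdot V$, where $U$ is $\mathcal{F}_{t}^{0}$-measurable and $V$ is $\sigma(\beta_{s},\,s\geq t)$-measurable (with the convention $U\equiv 1$, resp. $V\equiv 1$, when no $s_{i}$ satisfies the corresponding restriction). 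Pulling out the $\mathcal{F}_{t}^{\beta}$-measurable factor $U$ and applying Lemma \ref{lem:basic MKV} together with Theorem \ref{thm:The-process--1}, we get $\mathbf{E}[UV|\mathcal{F}_{t}^{\beta}]=U\,\mathbf{E}[V|\mathcal{F}_{t}^{\beta}]=U\,\mathbf{E}[V|\beta_{t}]$; since $\sigma(\beta_{t})\subseteq\mathcal{F}_{t}^{0}\subseteq\mathcal{F}_{t}^{P}$, this is $\mathcal{F}_{t}^{P}$-measurable, as required.

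No step is genuinely delicate: the substance is Theorem \ref{thm:The-process--1}, already proved, and the rest is bookkeeping. The one point worth stating explicitly is the identity $\mathcal{F}_{t+}^{0}\vee\mathcal{N}_{P}=\bigcap_{n\in\mathbb{N}}\mathcal{F}_{t_{n}}^{P}$, namely the commutation of the $\mathbf{P}$-completion with a countable decreasing intersection of $\sigma$-algebras, which has already been invoked in Section \ref{sec:The-Markov-property2}; beyond that, the argument is routine.
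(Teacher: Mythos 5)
Your argument is correct. The paper disposes of this corollary by simply citing the classical theorem of Blumenthal and Getoor (Ch.~I, (8.12)) that the augmented natural filtration of a Markov process is right-continuous, and what you have written is precisely the standard proof of that cited result --- reduce to showing $\mathcal{F}_{t}^{\beta}\subseteq\mathcal{F}_{t}^{P}$, then use a monotone class argument together with the Markov property of $\beta$ with respect to $\mathbb{F}^{\beta}$ (Theorem \ref{thm:The-process--1} via Lemma \ref{lem:basic MKV}) to show that $\mathbf{E}[Z|\mathcal{F}_{t}^{\beta}]$ has an $\mathcal{F}_{t}^{P}$-measurable version for bounded $\mathcal{F}_{\infty}^{0}$-measurable $Z$ --- so the substance is the same, merely spelled out rather than referenced.
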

\begin{proof}
See, e.g., \cite[(Ch. I, (8.12))]{key-5}. \end{proof}
As a consequence of Corollary \ref{rem:equalitySTANDARDwithCOMPLETED}, the filtrations $\mathbb{F}^{\beta}$ and $\mathbb{F}^{P}$ coincide and, in particular, the $\sigma$-algebra $\mathcal{F}^{\beta}_0$ is $\mathbf{P}$-trivial. 
\begin{rem}
It is worth to mention that in fact the statement of Corollary \ref{rem:equalitySTANDARDwithCOMPLETED} combined with Theorem \ref{TEO:The-information-process} is equivalent to the statement of Theorem \ref{thm:The-process--1}.
\end{rem}
\section{\label{sec:Semimartingale-decomposition} Semimartingale Decomposition of the Information Process}
\noindent This section deals with the semimartingale decomposition of $\beta$ with respect to $\mathbb{F}^{\beta}$. To begin with, we recall the notion of the optional projection of a general measurable process $X$.
\begin{prop}
Let $X$ be a nonnegative measurable process and $\mathbb{F}$ a
filtration satisfying the usual conditions. There exists a unique (up to indistinguishability) $\mathbb{F}$-optional
process $^{o}\!X$ such that 
\[
\mathbf{E}\left[X_{T}\,\mathbb{I}_{\left\{ T<+\infty\right\} }|\mathcal{F}_{T}\right]=\,^{o}\!X_{T}\,\mathbb{I}_{\left\{ T<+\infty\right\} },\;\mathbf{P}\textrm{-a.s.}
\]
for every $\mathbb{F}$-stopping time $T$. 
\end{prop}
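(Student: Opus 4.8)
The plan is the standard two-step construction of the optional projection: first prove uniqueness by means of the optional section theorem, then obtain existence by a functional monotone class argument, starting from an elementary generating class of processes and passing to monotone limits. The standing usual conditions on $\mathbb{F}$ enter to guarantee right-continuous modifications of martingales and the validity of the section theorem. For uniqueness, suppose $Y$ and $Z$ are $\mathbb{F}$-optional processes that both satisfy the displayed identity for every $\mathbb{F}$-stopping time $T$. Then $Y-Z$ is optional, hence so is the set $\{Y>Z\}\subseteq\mathbb{R}_{+}\times\Omega$; if it were not evanescent, the optional section theorem would produce an $\mathbb{F}$-stopping time $T$ whose graph is contained in $\{Y>Z\}$ and with $\mathbf{P}(T<+\infty)>0$, whence $Y_{T}\mathbb{I}_{\{T<+\infty\}}>Z_{T}\mathbb{I}_{\{T<+\infty\}}$ on a set of positive probability, contradicting the defining property. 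Applying the same argument to $\{Z>Y\}$ yields $Y=Z$ up to indistinguishability. This argument will also be the device that upgrades fibrewise identities to identities of processes in the existence part.

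For existence I would first treat bounded measurable $X$. Consider the multiplicative class $\mathcal{M}$ of processes of the form $(t,\omega)\mapsto\mathbb{I}_{[s,+\infty)}(t)\,Z(\omega)$ with $s\geq0$ and $Z$ a bounded $\mathcal{F}$-measurable random variable; since the intervals $[s,+\infty)$ generate $\mathcal{B}(\mathbb{R}_{+})$, the class $\mathcal{M}$ is multiplicative and generates $\mathcal{B}(\mathbb{R}_{+})\otimes\mathcal{F}$. For such an $X$, let $M=(M_{t},\,t\geq0)$ be a right-continuous version of the bounded martingale $t\mapsto\mathbf{E}[Z|\mathcal{F}_{t}]$ and put $^{o}\!X_{t}:=\mathbb{I}_{[s,+\infty)}(t)M_{t}$; this process is right-continuous and adapted, hence optional, and since $\{T\geq s\}\in\mathcal{F}_{T}$ while $\mathbf{E}[Z|\mathcal{F}_{T}]=M_{T}$ on $\{T<+\infty\}$ by optional stopping, the displayed identity holds for $X$. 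Next let $\mathcal{H}$ be the collection of bounded measurable processes admitting an optional projection in the stated sense; by the uniqueness above, $\mathcal{H}$ is a vector space containing the constants, and it is closed under bounded monotone limits: if $0\leq X^{n}\uparrow X$ with projections $Y^{n}$, then uniqueness and the section theorem allow one to choose the $Y^{n}$ increasing everywhere, $Y:=\lim_{n}Y^{n}$ is optional, and conditional monotone convergence gives $Y_{T}\mathbb{I}_{\{T<+\infty\}}=\lim_{n}\mathbf{E}[X^{n}_{T}\mathbb{I}_{\{T<+\infty\}}|\mathcal{F}_{T}]=\mathbf{E}[X_{T}\mathbb{I}_{\{T<+\infty\}}|\mathcal{F}_{T}]$, so $^{o}\!X:=Y$ works. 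Since $\mathcal{H}\supseteq\mathcal{M}$, the functional monotone class theorem gives that $\mathcal{H}$ is the class of all bounded measurable processes. Finally, for a general nonnegative measurable $X$ I would set $^{o}\!X:=\lim_{n}{}^{o}(X\wedge n)$ along an increasing version; this is optional with values in $[0,+\infty]$ and, by conditional monotone convergence, satisfies the required identity, while uniqueness again follows from the optional section theorem.

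The one genuinely nontrivial point, which I expect to be the main obstacle, is the passage from ``the identity holds for each fixed $T$ modulo a $T$-dependent null set'' to a statement that is valid for processes up to indistinguishability; this is exactly what forces the systematic use of the optional section theorem, both in the uniqueness part and in checking that $\mathcal{H}$ is stable under monotone limits. Everything else — existence of right-continuous martingale versions under the usual conditions, optional stopping, and the functional monotone class theorem — is classical.
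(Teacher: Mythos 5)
The paper does not actually prove this proposition; it simply cites Revuz--Yor, Ch.~IV, (5.6). Your argument is the standard proof given in that reference --- the optional section theorem for uniqueness and for upgrading fibrewise identities to indistinguishability, and a functional monotone class argument starting from processes $\mathbb{I}_{[s,+\infty)}(t)Z(\omega)$ projected onto $\mathbb{I}_{[s,+\infty)}(t)M_{t}$ with $M$ a right-continuous version of $\mathbf{E}[Z|\mathcal{F}_{t}]$ --- and it is correct as written.
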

\begin{proof}
See, for example, \cite[Ch. IV, (5.6)]{key-3}.
\end{proof}
\begin{defn} (i) \ 
The process $^{o}\!X$ is called the \textit{optional projection} of
the nonnegative measurable process $X$ with respect to $\mathbb{F}$. 

(ii) \ Let $X$ be an arbitrary measurable process. Then
we define the \textit{optional projection $^{o}\!X$ of $X$ with respect
to $\mathbb{F}$} as
\[
^{o}\!X_{t}\left(\omega\right):=\begin{cases}
^{o}\!X_{t}^{+}\left(\omega\right)-\,^{o}\!X_{t}^{-}\left(\omega\right), & \textrm{if }^o\!X_{t}^{+}\left(\omega\right)\wedge ^o\!\!X_{t}^{-}\left(\omega\right)<+\infty,\\
+\infty, & \textrm{otherwise},
\end{cases}
\]
where $^{o}\!X^{+}$ (resp. $^{o}\!X^{-}$) is the optional projection
of the positive part $X^+$ (resp. the negative part $X^{-}$) of $X$ with respect to $\mathbb{F}$.
\end{defn}
\begin{rem}
\label{rem:If--for rem 63} Let $\xi$ be an arbitrary random variable and $\mathcal{G}$ a sub-$\sigma$-field of $\mathcal{F}$. Then the conditional expectations $\mathbf{E}\left[\xi^+|\mathcal{G}\right]$ and $\mathbf{E}\left[\xi^-|\mathcal{G}\right]$ always exist and in analogy to the above definition we agree to define the conditional expectation $\mathbf{E}\left[\xi|\mathcal{G}\right]$ by
\begin{align*}
\mathbf{E}\left[\xi|\mathcal{G}\right]
&:=\begin{cases}
\mathbf{E}\left[\xi^+|\mathcal{G}\right]-\mathbf{E}\left[\xi^-|\mathcal{G}\right], & \textrm{on } \{\mathbf{E}\left[\xi^+|\mathcal{G}\right]\wedge \mathbf{E}\left[\xi^-|\mathcal{G}\right]<+\infty\},\\
+\infty, & \textrm{otherwise}\,.
\end{cases}
\end{align*}
Now let $X$ be an arbitrary measurable process and $\mathbb{F}$ a filtration satisfying the usual conditions. We emphasize that then for every $\mathbb{F}$-stopping time $T$ we have 
$$
\mathbf{E}\left[X_{T}\,\mathbb{I}_{\left\{ T<+\infty\right\} }|\mathcal{F}_{T}\right]=\,^{o}\!X_{T}\,\mathbb{I}_{\left\{ T<+\infty\right\} },\;\mathbf{P}\textrm{-a.s.}
$$
In particular, 
$\mathbf{E}\left[X_{t}|\mathcal{F}_{t}\right]=\,^{o}\!X_{t}$, $\mathbf{P}\textrm{-a.s.}$, for all $t\geq 0$.
\end{rem}
Next we are going to state a slight extension of a well-known result from filtering theory which will be used in the sequel.  The reader may find useful references, e.g., in \cite[Proposition 5.10.3.1]{key-4}, or in \cite[Ch. VI, (8.4)]{key-1000}.

First we will introduce the following definition.
\begin{defn}
Let $B$ be a continuous process, $\mathbb{F}$ a filtration and $T$ an $\mathbb{F}$-stopping time. Then $B$ is called an $\mathbb{F}$-Brownian motion stopped at $T$ if $B$ is an $\mathbb{F}$-martingale with square variation process 
$\langle B,B\rangle$: $\langle B,B\rangle_t=t\wedge T$, $t\geq 0$. 
\end{defn}
\begin{prop}[Innovation Lemma]
\label{pro:innovationLEMMA} Let $\mathbb{F}=(\mathcal{F}_{t})_{t\geq0}$
be a filtration satisfying the usual conditions, $T$ an $\mathbb{F}$-stopping time and $B$ an $\mathbb{F}$-Brownian motion stopped at $T$. Let $Z=(Z_{t},\, t\geq0)$
be an $\mathbb{F}$-optional process such that 
\begin{equation}\label{ass:innovation-lemma}
\mathbf{E}\left[\int_{0}^{t}\left|Z_{s}\right|\, ds\right]<+\infty,\quad t\geq0.
\end{equation}
Let the process $X=(X_{t},\, t\geq0)$ be given by 
\[
X_{t}:=\int_{0}^{t}Z_{s}\, ds+B_{t},\quad t\geq 0\,.
\]
Denote by $^{o}\!Z$ the optional projection of $Z$ with respect to $\mathbb{F}^{X}=(\mathcal{F}_{t}^{X})_{t\geq0}$.
Then the process $b$,
\[
b_{t}:=X_{t}-\int_{0}^{t}{}^{o}\!Z_{s}\, ds,\quad t\geq0,
\]
is an $\mathbb{F}^{X}$-Brownian motion stopped at $T$. 
\end{prop}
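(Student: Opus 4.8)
The plan is to adapt the classical innovation-lemma argument (as in \cite[Proposition 5.10.3.1]{key-4}) to the present situation in which the driving Brownian motion is stopped at the $\mathbb{F}$-stopping time $T$. First I would check that $b$ is a continuous $\mathbb{F}^{X}$-adapted process: adaptedness is immediate since $X$ is $\mathbb{F}^{X}$-adapted and $^{o}\!Z$, being an optional projection, is $\mathbb{F}^{X}$-optional; continuity follows because $t\mapsto\int_{0}^{t}|{}^{o}\!Z_{s}|\,ds<+\infty$ $\mathbf{P}$-a.s., which in turn is a consequence of the integrability assumption \eqref{ass:innovation-lemma} together with the fact that optional projection preserves the $L^{1}$-norm of the time integral. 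Once continuity and adaptedness are in hand, by L\'evy's characterization it suffices to prove that $b$ is an $\mathbb{F}^{X}$-martingale and that its quadratic variation is $\langle b,b\rangle_{t}=t\wedge T$; the latter is clear because $b$ differs from $B$ by a process of bounded variation (the absolutely continuous term $\int_{0}^{t}({}^{o}\!Z_{s}-Z_{s})\,ds$), so $\langle b,b\rangle=\langle B,B\rangle$, and $\langle B,B\rangle_{t}=t\wedge T$ by hypothesis.

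The core of the argument is therefore the martingale property of $b$. Here I would argue as follows. Fix $0\le s<t$ and a bounded $\mathcal{F}^{X}_{s}$-measurable random variable $U$; it suffices to show
\[
\mathbf{E}\big[(b_{t}-b_{s})\,U\big]=0\,.
\]
Writing $b_{t}-b_{s}=(X_{t}-X_{s})-\int_{s}^{t}{}^{o}\!Z_{v}\,dv=(B_{t}-B_{s})+\int_{s}^{t}(Z_{v}-{}^{o}\!Z_{v})\,dv$, the claim splits into two pieces. For the first piece, $\mathbf{E}[(B_{t}-B_{s})U]$ need \emph{not} vanish because $U$ is only $\mathcal{F}^{X}_{s}$-measurable, not $\mathcal{F}_{s}$-measurable; instead one must keep it and pair it with the corresponding part of the second piece. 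For the second piece, using Fubini (justified by \eqref{ass:innovation-lemma}) and the defining property of the optional projection one gets, for each fixed $v\in[s,t]$,
\[
\mathbf{E}\big[(Z_{v}-{}^{o}\!Z_{v})\,U\big]=\mathbf{E}\big[({}^{o}\!Z_{v}-{}^{o}\!Z_{v})\,U\big]=0
\]
\emph{provided} $U$ is $\mathcal{F}^{X}_{v}$-measurable; since $U$ is $\mathcal{F}^{X}_{s}$-measurable and $s\le v$, this holds. Hence $\mathbf{E}[\int_{s}^{t}(Z_{v}-{}^{o}\!Z_{v})\,dv\,U]=0$, so it remains to show $\mathbf{E}[(B_{t}-B_{s})U]=0$ — but this is \emph{false} in general, which signals that the decomposition must be organized differently: the correct route is to show directly that $b$ is a local martingale by exhibiting it as an $\mathbb{F}^{X}$-adapted continuous process whose paired increments against the dense class of ``elementary'' $\mathcal{F}^{X}$-predictable integrands vanish, using that $X$ itself has the semimartingale decomposition $X=\int_{0}^{\cdot}Z\,ds+B$ and that the $\mathbb{F}^{X}$-compensator of the finite-variation part $\int_{0}^{\cdot}Z\,ds$ is precisely $\int_{0}^{\cdot}{}^{o}\!Z\,ds$.

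Concretely, the clean way I would carry this out is: (1) observe that $X$ is an $\mathbb{F}$-special semimartingale with canonical decomposition $X=A+B$, $A_{t}=\int_{0}^{t}Z_{s}\,ds$; (2) recall the general fact that if $M$ is an $\mathbb{F}$-local martingale then its $\mathbb{F}^{X}$-optional projection is an $\mathbb{F}^{X}$-local martingale (here applied to $M=B$, using that $B$ is an $\mathbb{F}$-martingale by the stopped-Brownian-motion hypothesis and the integrability built into the setup); (3) likewise, the $\mathbb{F}^{X}$-dual predictable (equivalently, since $A$ is continuous and adapted, optional) projection of $A$ is $\int_{0}^{\cdot}{}^{o}\!Z_{s}\,ds$, using the defining property of $^{o}\!Z$ and Fubini to pass the projection inside the time integral; (4) conclude that
\[
b_{t}=X_{t}-\int_{0}^{t}{}^{o}\!Z_{s}\,ds={}^{o}\!X_{t}-\Big(\int_{0}^{t}{}^{o}\!Z_{s}\,ds\Big)^{o}+\big(\text{$\mathbb{F}^{X}$-martingale part}\big)
\]
is an $\mathbb{F}^{X}$-local martingale, in fact a true martingale since $b$ is continuous with $\langle b,b\rangle_{t}=t\wedge T\le t$ integrable; (5) finish with L\'evy's characterization, using $\langle b,b\rangle=\langle B,B\rangle=(t\wedge T)$, to identify $b$ as an $\mathbb{F}^{X}$-Brownian motion stopped at $T$. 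The main obstacle is step (3)–(4): one has to be careful that the optional projection of the \emph{martingale} part of $X$ with respect to the \emph{smaller} filtration $\mathbb{F}^{X}$ is again a martingale and that it matches $X-\int_{0}^{\cdot}{}^{o}\!Z\,ds$ up to the compensator — i.e., that projecting the decomposition $X=A+B$ term by term is legitimate — and to verify the stopping time $T$ is honored, i.e., that $b$ is genuinely constant after $T$ in the quadratic-variation sense; the integrability condition \eqref{ass:innovation-lemma} is exactly what is needed to make all the Fubini interchanges and the $L^{1}$-martingale arguments rigorous.
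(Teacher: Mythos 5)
There is a genuine error at the heart of your argument: you assert that $\mathbf{E}\left[(B_{t}-B_{s})U\right]$ need not vanish for bounded $\mathcal{F}^{X}_{s}$-measurable $U$ ``because $U$ is only $\mathcal{F}^{X}_{s}$-measurable, not $\mathcal{F}_{s}$-measurable''. But in this setting $\mathcal{F}^{X}_{s}\subseteq\mathcal{F}_{s}$: since $Z$ is $\mathbb{F}$-optional, the process $\int_{0}^{\cdot}Z_{u}\,du$ is $\mathbb{F}$-adapted, and $B$ is $\mathbb{F}$-adapted, so $X$ is $\mathbb{F}$-adapted and its (completed) natural filtration is contained in $\mathbb{F}$. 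Hence $U$ \emph{is} $\mathcal{F}_{s}$-measurable, $\mathbf{E}\left[(B_{t}-B_{s})U\right]=\mathbf{E}\left[\mathbf{E}\left[B_{t}-B_{s}|\mathcal{F}_{s}\right]U\right]=0$ because $B$ is an $\mathbb{F}$-martingale, and the direct computation you abandoned goes through exactly as you first set it up. This is precisely the paper's proof: establish $\mathbb{F}^{X}\subseteq\mathbb{F}$, kill the $B$-increment by the tower property, kill the $\int_{s}^{t}(Z_{u}-{}^{o}\!Z_{u})\,du$ term by Fubini and the tower property through $\mathcal{F}^{X}_{u}$ (taking care, via Fubini, of the Lebesgue-null set of times $u$ at which $Z_{u}$ might fail to be integrable), and then conclude with the invariance of the quadratic variation under change of filtration together with L\'evy's characterization.

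The replacement route you propose in steps (1)--(5) does not repair the (nonexistent) defect and introduces new problems. Step (4) is not meaningful as written: $X$ and $\int_{0}^{\cdot}{}^{o}\!Z_{s}\,ds$ are already $\mathbb{F}^{X}$-adapted, so ${}^{o}\!X=X$ and the displayed identity collapses; moreover the assertion in step (3) that the $\mathbb{F}^{X}$-compensator of $A=\int_{0}^{\cdot}Z_{s}\,ds$ is $\int_{0}^{\cdot}{}^{o}\!Z_{s}\,ds$ is exactly the Fubini-plus-tower computation you had already carried out for the ``second piece'', so nothing is gained. Note also that $\mathbf{E}\left[A_{t}|\mathcal{F}^{X}_{t}\right]=\int_{0}^{t}\mathbf{E}\left[Z_{s}|\mathcal{F}^{X}_{t}\right]ds$ is in general \emph{not} equal to $\int_{0}^{t}{}^{o}\!Z_{s}\,ds$, so one must be careful to speak of the dual projection (compensator) rather than the optional projection of $A$; conflating the two is what makes step (4) garbled. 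The correct and shortest path is the one you started on before second-guessing it.
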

\begin{proof}
For the sake of easy reference a proof of this result is provided
in Appendix \ref{sec:Proof of Innovation lemma}.
\end{proof}
In the remainder of this section we shall make use of the filtration $\mathbb{G}=(\mathcal{G}_{t})_{t\geq0}$
defined as 
\begin{equation}
\mathcal{G}_{t}:=\bigcap_{u>t}\mathcal{F}_{u}^{\beta}\vee\sigma\left(\tau\right),\quad t\geq 0\,,\label{eq:FF D}
\end{equation}
which is equal to the initial enlargement of the filtration $\mathbb{F}^{\beta}$ by the $\sigma$-algebra $\sigma(\tau)$. In the sequel,
the process $Z=(Z_{t},\, t\geq0)$ is defined by 
\begin{equation}
Z_{t}:=\frac{\beta_{t}}{\tau-t}\,\mathbb{I}_{\{t<\tau\}}\,,\quad t\geq 0\,. 
\label{eq:processo Z}
\end{equation}
The following auxiliary results will be used to prove the semimartingale
property of the process $\beta$.
\begin{lem}
\label{lem:sqrt-tau-1} We have  
\[
\mathbf{E}\left[\int_{0}^{t}\left|Z_{s}\right|\, ds\right]<+\infty \mbox{ for all } t\geq0\,.
\]
\end{lem}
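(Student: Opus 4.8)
The plan is to condition on $\sigma(\tau)$ and to exploit Corollary \ref{cor:LEM if--is}, which reduces the computation to one involving the deterministic-length Brownian bridge $\beta^r$. By Tonelli's theorem and the definition \eqref{eq:processo Z} of $Z$,
\[
\mathbf{E}\left[\int_{0}^{t}\left|Z_{s}\right|\, ds\right]=\int_{0}^{t}\mathbf{E}\left[\frac{\left|\beta_{s}\right|}{\tau-s}\,\mathbb{I}_{\{s<\tau\}}\right]ds
=\int_{0}^{t}\int_{\left(s,+\infty\right)}\mathbf{E}\left[\frac{\left|\beta_{s}^{r}\right|}{r-s}\right]dF(r)\,ds\,,
\]
where the last step uses the formula of total probability together with Corollary \ref{cor:LEM if--is} applied to the (nonnegative) integrand $h(\tau,\beta)=\frac{|\beta_s|}{\tau-s}\mathbb{I}_{\{s<\tau\}}$.

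Next I would use the explicit law of $\beta_{s}^{r}$. By \eqref{eq:bbdensity}, for $0<s<r$ the random variable $\beta_{s}^{r}$ is centered Gaussian with variance $\frac{s(r-s)}{r}$, so that $\mathbf{E}\left[|\beta_{s}^{r}|\right]=\sqrt{\tfrac{2}{\pi}}\,\sqrt{\tfrac{s(r-s)}{r}}$. Hence
\[
\mathbf{E}\left[\frac{\left|\beta_{s}^{r}\right|}{r-s}\right]=\sqrt{\frac{2}{\pi}}\,\frac{1}{\sqrt{r-s}}\,\sqrt{\frac{s}{r}}\le\sqrt{\frac{2}{\pi}}\,\frac{1}{\sqrt{r-s}}\,,\qquad 0<s<r\,.
\]
The key point is that the apparent singularity at $r=s$ is only a square-root singularity, which is integrable. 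Plugging this bound in and integrating in $r$ first,
\[
\mathbf{E}\left[\int_{0}^{t}\left|Z_{s}\right|\, ds\right]\le\sqrt{\frac{2}{\pi}}\int_{0}^{t}\left(\int_{\left(s,+\infty\right)}\frac{1}{\sqrt{r-s}}\,dF(r)\right)ds\,.
\]

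To finish I would bound the inner integral. Splitting $(s,+\infty)=(s,s+1]\cup(s+1,+\infty)$, on the second piece $\frac{1}{\sqrt{r-s}}\le 1$, so that part contributes at most $\mathbf{P}(\tau>s+1)\le 1$. On the first piece, writing the tail $\overline{F}(x):=1-F(x)=\mathbf{P}(\tau>x)$ and integrating by parts (or using Tonelli on $\int_{(s,s+1]}\int_{0}^{r-s}\tfrac{1}{2}v^{-3/2}\,\mathbb{I}_{\{v>r-s\}}\,dv\,dF(r)$, being a touch careful since $v^{-3/2}$ blows up — the honest way is $\frac{1}{\sqrt{r-s}}=1+\int_{r-s}^{1}\tfrac12 v^{-3/2}dv$ for $r-s\le 1$), one sees $\int_{(s,s+1]}\frac{1}{\sqrt{r-s}}\,dF(r)\le 1+\int_{0}^{1}\tfrac{1}{2}v^{-3/2}\,\mathbf{P}(s<\tau\le s+v)\,dv$. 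Therefore, by Tonelli applied once more to the double integral in $(s,v)$,
\[
\int_{0}^{t}\int_{(s,s+1]}\frac{1}{\sqrt{r-s}}\,dF(r)\,ds\le t+\int_{0}^{1}\frac{1}{2\,v^{3/2}}\left(\int_{0}^{t}\mathbf{P}(s<\tau\le s+v)\,ds\right)dv\le t+\int_{0}^{1}\frac{1}{2\,v^{3/2}}\cdot v\,dv=t+1\,,
\]
where we used $\int_{0}^{t}\mathbf{P}(s<\tau\le s+v)\,ds=\int_{0}^{t}\mathbf{E}\left[\mathbb{I}_{\{s<\tau\le s+v\}}\right]ds=\mathbf{E}\left[\int_{0}^{t}\mathbb{I}_{\{\tau-v\le s<\tau\}}ds\right]\le v$. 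Altogether $\mathbf{E}\left[\int_{0}^{t}|Z_{s}|\,ds\right]\le\sqrt{\tfrac{2}{\pi}}\,(t+2)<+\infty$, which is the claim. The only delicate point is handling the $1/\sqrt{r-s}$ singularity near the diagonal; once it is rewritten so that Tonelli legitimately applies, the bound is immediate and, in fact, uniform over all distributions $F$.
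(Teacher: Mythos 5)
Your proof is correct, and its first reduction is exactly the paper's: condition on $\sigma(\tau)$ via Corollary \ref{cor:LEM if--is}, use $\mathbf{E}\left[|\beta^r_s|\right]=\sqrt{2/\pi}\,\sqrt{s(r-s)/r}$, and reduce everything to a double integral with a $(r-s)^{-1/2}$ singularity on the diagonal. Where you diverge is in the order of integration, and this is what makes your argument noticeably longer. The paper integrates in $s$ first for fixed $r$: the singular factor is then integrated against Lebesgue measure, and $\int_0^{t\wedge r}(r-s)^{-1/2}\,ds\le 2r^{1/2}$ kills the singularity outright, with the retained prefactor $r^{-1/2}s^{1/2}\le 1$ doing the rest after splitting the outer $dF$-integral at $r=t$; the whole estimate is three lines and gives the bound $4\sqrt{2/\pi}\;t^{1/2}$. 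You instead integrate in $r$ first, which puts $(r-s)^{-1/2}$ against the possibly atomic measure $dF$ near $r=s$, forcing the layer-cake rewriting $\frac{1}{\sqrt{r-s}}=1+\int_{r-s}^1\tfrac12 v^{-3/2}\,dv$ and a further Tonelli exchange together with the observation $\int_0^t\mathbf{P}(s<\tau\le s+v)\,ds\le v$. That step is the genuinely delicate point of your route and you handle it correctly; what it buys is a bound that is uniform in $F$ and linear in $t$, at the price of discarding the factor $\sqrt{s/r}$ that the paper exploits. One small arithmetic slip: your two pieces contribute $t+1$ and $t$ respectively, so the final bound is $\sqrt{2/\pi}\,(2t+1)$ rather than $\sqrt{2/\pi}\,(t+2)$; this is of course immaterial for finiteness.
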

\begin{proof}
Using the formula of total probability, Corollary \ref{cor:LEM if--is} and \eqref{eq:bbdensity}, we can calculate 
\begin{align*}
\mathbf{E}\left[\int_{0}^{t}\left|Z_{s}\right|\, ds\right] & =
\int_0^{+\infty}\int_0^{t\wedge r}E\left|\beta^r_s\right|/\left(r-s\right)\,ds\,dF\left(r\right)\\
&=(2/\pi)^{1/2}\int_{0}^{+\infty}r^{-1/2}\int_{0}^{t\wedge r}s^{1/2}\left(r-s\right)^{-1/2}\,ds\,dF(r).
\end{align*}
The outer integral on the right-hand side of the above expression
can be split into two integrals, the first one over $(0,t]$ and the second one over $(t,+\infty)$. For the first integral we see that 
\begin{align*}
\lefteqn{\int\limits_{(0,t]}r^{-1/2}\int_{0}^{t\wedge r}s^{1/2}\left(r-s\right)^{-1/2}\,ds\,dF(r)}\\
& \leq\int\limits_{(0,t]}\int_{0}^{t\wedge r}\left(r-s\right)^{-1/2}\,ds\,dF(r)=\int\limits_{(0,t]}2\,r^{1/2}\, dF\left(r\right)\leq2\,t^{1/2}<+\infty\,.
\end{align*}
The second integral can be estimated as follows:
\begin{eqnarray*}
\lefteqn{\int_{\left(t,+\infty\right)}r^{-1/2}\int_{0}^{t\wedge r}s^{1/2}\left(r-s\right)^{-1/2}\,ds\,dF(r)}\\
&\leq& t^{1/2}\int_{\left(t,+\infty\right)}r^{-1/2}\int_{0}^{t}\left(r-s\right)^{-1/2}\, ds\,dF(r)\\
&\leq& t^{1/2}\int_{\left(t,+\infty\right)}r^{-1/2}\,2\,r^{1/2}\,dF\left(r\right)\leq2\,t^{1/2}<+\infty\,.
\end{eqnarray*}
The statement of the lemma is now proved.
\end{proof}
\begin{cor}
\label{cor:The-process-Z} $Z$ defined by (\ref{eq:processo Z})
is integrable with respect to the Lebesgue measure $\mathbf{P}$-a.s.: 
\[
\mathbf{P}\left(\int_{0}^{t}\left|Z_{s}\right|\, ds<+\infty\right)=1, \textrm{ for all } t\geq0\,.
\]
\end{cor}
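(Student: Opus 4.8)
The plan is to obtain this as an immediate consequence of Lemma \ref{lem:sqrt-tau-1}. First I would fix $t\geq 0$ and set $\zeta_{t}:=\int_{0}^{t}\left|Z_{s}\right|\,ds$, regarded as a map $\Omega\to[0,+\infty]$. This map is $\mathcal{F}$-measurable: the process $(s,\omega)\mapsto Z_{s}(\omega)$ is jointly measurable (it is built from the jointly measurable map $(s,\omega)\mapsto\beta_{s}(\omega)$, the random variable $\tau$, and the indicator $\mathbb{I}_{\{s<\tau\}}$), hence $(s,\omega)\mapsto\left|Z_{s}(\omega)\right|$ is jointly measurable and nonnegative, and Tonelli's theorem gives the measurability of $\zeta_{t}$ together with the identity $\mathbf{E}\left[\zeta_{t}\right]=\mathbf{E}\left[\int_{0}^{t}\left|Z_{s}\right|\,ds\right]$.

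By Lemma \ref{lem:sqrt-tau-1} we have $\mathbf{E}\left[\zeta_{t}\right]<+\infty$. Since $\zeta_{t}\geq 0$, a random variable with finite expectation is finite $\mathbf{P}$-a.s., so $\mathbf{P}(\zeta_{t}=+\infty)=0$, that is, $\mathbf{P}\left(\int_{0}^{t}\left|Z_{s}\right|\,ds<+\infty\right)=1$; as $t\geq 0$ was arbitrary, the corollary follows. There is no real obstacle here: the only point deserving a line of comment is the joint measurability of $(s,\omega)\mapsto Z_{s}(\omega)$ that is needed to make sense of the expectation in Lemma \ref{lem:sqrt-tau-1} in the first place, and that is routine given the measurability statements established in Section \ref{sec:Definition-and-First}.
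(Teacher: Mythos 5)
Your proposal is correct and is exactly the argument the paper intends: the corollary is stated without proof precisely because it follows immediately from Lemma \ref{lem:sqrt-tau-1} by the standard fact that a nonnegative random variable with finite expectation is $\mathbf{P}$-a.s.\ finite. Your additional remark on joint measurability of $(s,\omega)\mapsto Z_s(\omega)$ is a reasonable piece of bookkeeping but does not change the route.
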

\begin{rem}
\label{Z-modification}
In view of Corollary \ref{cor:The-process-Z}, there can be found a process $\overline{Z}$ which is indistinguishable from $Z$ such that $\int_0^t\left|\overline{Z}_s\right|\, ds<+\infty$ for all $t\geq 0$ \textit{everywhere} (and not only $\mathbf{P}$-a.s.). Without loss of generality we can assume that $Z$ has this property. If this would not be the case we could modify the paths of $Z$ on a negligible set. By this modification, the optional projection $^o\!Z$ with respect to any filtration $\mathbb{F}$ will stay in the same class of indistinguishable processes. We can also modify  $\beta$ putting $\beta=0$ on the negligible set where the above integrals are not finite for all $t$. In this way the desired property for $Z$ would be fulfilled automatically.
\end{rem} 
\begin{rem}
\label{rem:We-make-the QWERTY} 
(i) \ The process $Z$ is optional with respect to the filtration $\mathbb{G}$ because $Z$ is right-continuous and $\mathbb{G}$-adapted.
 
(ii) \ Let $l_+$ be the Lebesgue measure on $\mathbb{R}_+$. Because of Lemma \ref{lem:sqrt-tau-1}, using Fubini's theorem, there is a measurable subset $\Lambda$ of $\mathbb{R}_+$ such that $l_+(\mathbb{R}_+\setminus\Lambda)=0$ and $E[|Z_t|]<+\infty$ for all $t\in\Lambda$. For later use, we fix a set $\Lambda$ with these properties.

(iii) \ Formulas obtained in Section \ref{sec:Conditional-Expectations} allow to compute the optional projection $^o\!Z$ of $Z$ with respect to the filtration $\mathbb{F}^\beta$ on the set $\Lambda$: For all $t\in\Lambda$ we have $\mathbf{P}$-a.s.
\begin{align*}
^o\!Z_t&=\mathbf{E}\left[Z_{t}|\mathcal{F}_{t}^{\beta}\right] =\mathbf{E}\left[\frac{\beta_{t}}{\tau-t}\,\mathbb{I}_{\left\{ t<\tau\right\} }|\beta_{t}\right]\\
&=\beta_{t}\int_{\left(t,+\infty\right)}\frac{1}{r-t}\phi_{t}\left(r,\beta_{t}\right)dF\left(r\right)\,\mathbb{I}_{\left\{ t<\tau\right\} }\,, 
\end{align*}
where the first equality follows from Remark \ref{rem:If--for rem 63}, the second from the Markov property of the process $\beta$ and Definition (\ref{eq:processo Z}) of the process
$Z$, while the third equality follows directly from Proposition \ref{cor:Let--be-BIS} and the measurability of
$\beta_{t}$ with respect to $\sigma(\beta_{t})$. Note that in the above equality all terms are well-defined for \textit{every} $t\geq 0$, the condition that $t\in\Lambda$ is only needed for the second and third equality.
\end{rem}
\begin{prop}\label{B}
The process $B=(B_{t},\, t\geq0)$ defined by 
\begin{equation}
B_{t}:=\beta_{t}+\int_{0}^{t}Z_{s}\, ds\,,\quad t\geq 0\,,\label{eq:BIGBM}
\end{equation}
is a $\mathbb{G}$-Brownian motion stopped at $\tau$.
\end{prop}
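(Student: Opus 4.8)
The plan is to show that $B$ defined by \eqref{eq:BIGBM} is a continuous $\mathbb{G}$-local martingale with quadratic variation $\langle B,B\rangle_t = t\wedge\tau$, and then identify it, via L\'evy's characterization applied up to the stopping time $\tau$, as a $\mathbb{G}$-Brownian motion stopped at $\tau$. The key observation is that, on the event $\{\tau = r\}$, the process $\beta$ coincides with $\beta^r$, and by Lemma \ref{BMr} the process $B^r_t = \beta^r_t + \int_0^t \frac{\beta^r_s}{r-s}\,ds$ is a Brownian motion stopped at $r$ with respect to the completed natural filtration of $\beta^r$. Since $B_t = B^\tau_t$ on $\{\tau=r\}$ (using \eqref{eq:processo Z}, noting $Z_s = \frac{\beta^r_s}{r-s}\mathbb{I}_{\{s<r\}}$ there), the claim is essentially that the martingale property of $B^r$ transfers to the initially enlarged filtration $\mathbb{G}$.

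First I would fix $0\le s < t$ and a bounded $\mathcal{G}_s$-measurable random variable; since $\mathcal{G}_s = \bigcap_{u>s}\mathcal{F}^\beta_u \vee \sigma(\tau)$, by a monotone class argument it suffices to test against random variables of the form $\eta\, k(\tau)$ where $\eta$ is bounded and $\mathcal{F}^\beta_{s}$-measurable (equivalently, after a limiting argument in the right-continuity, measurable with respect to $\sigma(\beta_v,\ v\le s')$ for $s'$ slightly larger than $s$) and $k$ is a bounded Borel function. One then wants to prove $\mathbf{E}[(B_t - B_s)\,\eta\, k(\tau)] = 0$ and $\mathbf{E}[((B_t-B_s)^2 - (t\wedge\tau - s\wedge\tau))\,\eta\, k(\tau)] = 0$. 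Conditioning on $\tau$ via the formula of total probability and Corollary \ref{cor:LEM if--is}, each expectation becomes an integral over $r$ against $dF(r)$ of an expression involving only $\beta^r$; on $\{\tau=r\}$, $\eta$ becomes a functional of $\beta^r_v,\ v\le s$ (this is where one uses that $\beta$ and $\beta^r$ agree on $\{\tau=r\}$ and that this functional is measurable with respect to the completed natural filtration of $\beta^r$ at time $s$), and the integrand vanishes because $B^r - B^r_{\cdot\wedge s}$ is a martingale increment orthogonal to that $\sigma$-algebra, with conditional variance $(t\wedge r - s\wedge r)$ since $\langle B^r, B^r\rangle = \cdot\wedge r$. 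Integrating back over $r$ gives the two identities, hence $B$ is a $\mathbb{G}$-martingale (after checking integrability, which follows from Lemma \ref{lem:sqrt-tau-1} for the drift part and from $\mathbf{E}|\beta_t| < \infty$ for $\beta$) with $\langle B,B\rangle_t = t\wedge\tau$.

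The main obstacle I anticipate is the careful handling of the filtration: one must argue that a $\mathcal{G}_s$-measurable test variable can be reduced, modulo null sets and the intersection $\bigcap_{u>s}$, to something that on each fiber $\{\tau=r\}$ is measurable with respect to the completed natural filtration of $\beta^r$ at time $s$ — precisely the filtration with respect to which Lemma \ref{BMr} grants the martingale property of $B^r$. The right-continuity built into $\mathcal{F}^\beta_s = \mathcal{F}^0_{s+}\vee\mathcal{N}_P$ requires passing to a slightly larger time $s'\downarrow s$ and then using continuity of $B^r$ and dominated convergence, or alternatively invoking that $\mathbb{F}^P$ is already right-continuous (Corollary \ref{rem:equalitySTANDARDwithCOMPLETED}) so that $\mathcal{F}^\beta_s = \mathcal{F}^P_s$ is generated by $\{\beta_v : v\le s\}$ up to null sets. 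Once this reduction is in place, the rest is a routine transfer of the L\'evy characterization through the disintegration over $\tau$; the continuity of $B$ is immediate from continuity of $\beta$ and absolute continuity of $\int_0^\cdot Z_s\,ds$, and the final identification as a $\mathbb{G}$-Brownian motion stopped at $\tau$ follows from the definition of that notion together with $\langle B,B\rangle_t = t\wedge\tau$.
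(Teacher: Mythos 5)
Your proposal is correct and follows essentially the same route as the paper: disintegrate over $\tau$ via the total probability formula and Corollary \ref{cor:LEM if--is}, reduce to the fact (Lemma \ref{BMr}) that each $B^r$ is a Brownian motion stopped at $r$ for the filtration of $\beta^r$, and verify the two martingale properties (for $B$ and for its square minus $t\wedge\tau$) against test variables of the form (functional of $\beta$ up to time $s$)$\times k(\tau)$, extended by a monotone class argument. The paper phrases the second property as the martingale property of $B_t^2-(t\wedge\tau)$ and concludes directly from its definition of a stopped Brownian motion rather than citing L\'evy's theorem, but these are cosmetic differences.
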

\begin{proof}
Note that by Corollary \ref{cor:The-process-Z} and Remark \ref{Z-modification}, the process $(Z_{t},\, t\geq0)$
is integrable with respect to the Lebesgue measure, hence $B$ is
well-defined. 

It is clear that the process $B$ is continuous and
$\mathbb{G}$-adapted. In order to prove that it is indeed a $\mathbb{G}$-Brownian motion stopped at $\tau$, it suffices to prove that the process $B$ and the process $X$ defined by $X_t:=B_{t}^{2}-(t\wedge\tau)$, $t\geq0$, are both $\mathbb{G}$-martingales. To this end we shall use Corollary
\ref{cor:LEM if--is}.

First we show that $B$ is a $\mathbb{G}$-martingale.
Let $n\in\mathbb{N}$ be an arbitrary but fixed natural number,
$0<t_{1}<...<t_{n-1}<t_{n}:=t,\: h\geq0$ and $g$ an arbitrary bounded
Borel function. Recalling the definition of $B^r$ in \eqref{eq:BMfrom_extBB}, we have that
\begin{align*}
\lefteqn{\mathbf{E}\left[\left(B_{t+h}-B_{t}\right)g\left(\beta_{t_{1}},\dots,\beta_{t_{n}},\tau\right)\right]}\\
&=\int_{\left(0,+\infty\right)}\mathbf{E}\left[\left(B_{t+h}-B_{t}\right)g\left(\beta_{t_{1}},...,\beta_{t_{n}},\tau\right)|\tau=r\right]dF\left(r\right)\\
 & =\int_{\left(0,+\infty\right)}\mathbf{E}\left[\left(B_{t+h}^{r}-B_{t}^{r}\right)g\left(\beta_{t_{1}}^{r},\ldots,\beta_{t_{n}}^{r},r\right)\right]dF(r)=0,
\end{align*}
because $B^{r}$ is a Brownian motion and hence a martingale with respect to the filtration generated by $\beta^{r}$ (cf. Lemma \ref{BMr}). Using a monotone class argument, from this it can easily be derived that $B$ is a martingale with respect to $\mathbb{G}$. 

It remains to prove that the process $X$ is a $\mathbb{G}$-martingale. Putting $X_{t}^{r}:=(B_{t}^{r})^{2}-(t\wedge r)$, $t\geq0$,
and repeating the same arguments used above, we see that 
\begin{align*}
\lefteqn{\mathbf{E}\left[\left(X_{t+h}-X_{t}\right)g\left(\beta_{t_{1}},\dots,\beta_{t_{n}},\tau\right)\right]}\\
&=\int_{\left(0,+\infty\right)}\mathbf{E}\left[\left(X_{t+h}-X_{t}\right)g\left(\beta_{t_{1}},\ldots,\beta_{t_{n}},\tau\right)|\tau=r\right]dF\left(r\right)\\
 & =\int_{\left(0,+\infty\right)}\mathbf{E}\left[\left(X_{t+h}^{r}-X_{t}^{r}\right)g\left(\beta_{t_{1}}^{r},\ldots,\beta_{t_{n}}^{r},r\right)\right]dF(r)=0,
\end{align*}
since $X^{r}$ is a martingale with respect to the filtration generated by $\beta^{r}$. As above from this follows that $X$ is a martingale with respect to $\mathbb{G}$. This completes the proof of Lemma \ref{B}.
\end{proof}
We are now ready to state the main result of this section.
\begin{thm}
\label{TEO:semimartFb} The process $b=(b_{t},\, t\geq0)$ given by
\begin{align}
\nonumber b_{t}&:=\beta_{t}+\int_{0}^{t}\mathbf{E}\left[\frac{\beta_{s}}{\tau-s}\,\mathbb{I}_{\{s<\tau\}}\big|\beta_{s}\right]\, ds\\
&=\beta_t+\int_0^t \beta_s \int_{(s,+\infty)}\frac{1}{r-s}\,\phi_s(r,\beta_s)\, dF(r)\,\mathbb{I}_{\{s<\tau\}}\, ds\,,
\label{eq:semimart-dec-1}
\end{align}
where the second equality holds $\mathbf{P}$-a.s.,
is an $\mathbb{F}^{\beta}$-Brownian motion stopped at $\tau$. Thus the information process $\beta$ is an $\mathbb{F}^\beta$-semimartingale whose decomposition is determined by \eqref{eq:semimart-dec-1}.
\end{thm}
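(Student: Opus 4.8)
The plan is to obtain the decomposition by combining Proposition~\ref{B} with the Innovation Lemma (Proposition~\ref{pro:innovationLEMMA}): Proposition~\ref{B} provides a semimartingale decomposition of $\beta$ in the large filtration $\mathbb{G}$, and the Innovation Lemma projects it down to $\mathbb{F}^{\beta}$. Concretely, Proposition~\ref{B} says that $B_{t}=\beta_{t}+\int_{0}^{t}Z_{s}\,ds$ is a $\mathbb{G}$-Brownian motion stopped at $\tau$, where $\mathbb{G}$ is the filtration \eqref{eq:FF D} and $Z$ is the process \eqref{eq:processo Z}; rewriting this as $\beta_{t}=\int_{0}^{t}(-Z_{s})\,ds+B_{t}$ puts it exactly in the form required by the Innovation Lemma, with underlying filtration $\mathbb{F}:=\mathbb{G}$, stopped Brownian motion $B$, drift $-Z$, stopping time $\tau$, and ``$X$'' equal to $\beta$.

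To run this I would verify the hypotheses of Proposition~\ref{pro:innovationLEMMA} in turn: $\mathbb{G}$ satisfies the usual conditions (right-continuity is built into the definition \eqref{eq:FF D}, and completeness holds since $\mathcal{N}_{P}\subseteq\mathcal{F}_{t}^{\beta}$); $B$ is a $\mathbb{G}$-Brownian motion stopped at the $\mathbb{G}$-stopping time $\tau$ by Proposition~\ref{B}; $-Z$ is $\mathbb{G}$-optional by Remark~\ref{rem:We-make-the QWERTY}(i); and $\mathbf{E}[\int_{0}^{t}|Z_{s}|\,ds]<+\infty$ for all $t\geq 0$ by Lemma~\ref{lem:sqrt-tau-1}, which is condition \eqref{ass:innovation-lemma} for the drift $-Z$. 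A point requiring care is that the completed, right-continuous filtration generated by $X=\beta$ appearing in the conclusion of the Innovation Lemma is precisely $\mathbb{F}^{\beta}$; here Corollary~\ref{rem:equalitySTANDARDwithCOMPLETED} (equivalently, Theorem~\ref{thm:The-process--1}), which identifies $\mathbb{F}^{\beta}$ with $\mathbb{F}^{P}$, is essential. Granting this, the Innovation Lemma yields, using linearity of the optional projection, that
\[
b_{t}=\beta_{t}-\int_{0}^{t}{}^{o}(-Z)_{s}\,ds=\beta_{t}+\int_{0}^{t}{}^{o}\!Z_{s}\,ds
\]
is an $\mathbb{F}^{\beta}$-Brownian motion stopped at $\tau$, where ${}^{o}\!Z$ denotes the $\mathbb{F}^{\beta}$-optional projection of $Z$.

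It then remains to identify $\int_{0}^{t}{}^{o}\!Z_{s}\,ds$ with the two integral expressions in \eqref{eq:semimart-dec-1}. By Remark~\ref{rem:If--for rem 63}, ${}^{o}\!Z_{s}=\mathbf{E}[Z_{s}|\mathcal{F}_{s}^{\beta}]$, $\mathbf{P}$-a.s., for each fixed $s$, and for every $s$ in the full-Lebesgue-measure set $\Lambda$ of Remark~\ref{rem:We-make-the QWERTY}(ii) the computation in Remark~\ref{rem:We-make-the QWERTY}(iii) — which invokes the Markov property and Proposition~\ref{cor:Let--be-BIS} — gives ${}^{o}\!Z_{s}=\mathbf{E}[\frac{\beta_{s}}{\tau-s}\mathbb{I}_{\{s<\tau\}}|\beta_{s}]=\beta_{s}\int_{(s,+\infty)}\frac{1}{r-s}\phi_{s}(r,\beta_{s})\,dF(r)\,\mathbb{I}_{\{s<\tau\}}$, $\mathbf{P}$-a.s. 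Since $\Lambda^{c}$ is Lebesgue-null and, by conditional Jensen and Lemma~\ref{lem:sqrt-tau-1}, $\mathbf{E}[\int_{0}^{t}|{}^{o}\!Z_{s}|\,ds]\leq\mathbf{E}[\int_{0}^{t}|Z_{s}|\,ds]<+\infty$ (so all integrals are finite $\mathbf{P}$-a.s.), a Fubini argument shows that the three path integrals $\int_{0}^{t}{}^{o}\!Z_{s}\,ds$, $\int_{0}^{t}\mathbf{E}[\frac{\beta_{s}}{\tau-s}\mathbb{I}_{\{s<\tau\}}|\beta_{s}]\,ds$ and $\int_{0}^{t}\beta_{s}\int_{(s,+\infty)}\frac{1}{r-s}\phi_{s}(r,\beta_{s})\,dF(r)\,\mathbb{I}_{\{s<\tau\}}\,ds$ agree $\mathbf{P}$-a.s., which is formula \eqref{eq:semimart-dec-1} for $b$. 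Finally, $\beta_{t}=b_{t}-\int_{0}^{t}{}^{o}\!Z_{s}\,ds$ writes $\beta$ as a continuous $\mathbb{F}^{\beta}$-martingale (a stopped Brownian motion) plus a continuous adapted process of finite variation, so $\beta$ is an $\mathbb{F}^{\beta}$-semimartingale with decomposition \eqref{eq:semimart-dec-1}. I expect the genuine obstacle to be not the computations but the filtration bookkeeping — making sure that the filtration generated by $X=\beta$ in the Innovation Lemma really is $\mathbb{F}^{\beta}$ and that $\mathbb{G}$ meets the usual conditions — since the analytic content is already packaged in Proposition~\ref{B}, Lemma~\ref{lem:sqrt-tau-1}, and the Innovation Lemma.
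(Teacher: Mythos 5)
Your proposal is correct and follows essentially the same route as the paper: rewrite Proposition \ref{B} as $\beta_t=\int_0^t(-Z_s)\,ds+B_t$, apply the Innovation Lemma (Proposition \ref{pro:innovationLEMMA}) with the enlarged filtration $\mathbb{G}$ to project onto $\mathbb{F}^X=\mathbb{F}^\beta$, and identify ${}^{o}\!Z$ via Remark \ref{rem:We-make-the QWERTY}(iii) together with a Fubini argument on the Lebesgue-null complement of $\Lambda$. Your explicit attention to the filtration bookkeeping (that $\mathbb{G}$ satisfies the usual conditions and that the filtration generated by $X=\beta$ is indeed $\mathbb{F}^\beta$ by Corollary \ref{rem:equalitySTANDARDwithCOMPLETED}) is a point the paper passes over more quickly, but it is the same argument.
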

\begin{proof}
First we notice that by Lemma \ref{lem:sqrt-tau-1} and Remarks \ref{rem:If--for rem 63} and \ref{rem:We-make-the QWERTY} (iii), the integrands of the second and third term of \eqref{eq:semimart-dec-1} are well-defined for all $s\geq 0$ and they are equal $l_+$-a.e. and integrable on $[0,t]\times\Omega$ with respect to $l_+\times\mathbf{P}$. This yields the second equality $\mathbf{P}$-a.s. Now we can apply Proposition \ref{pro:innovationLEMMA} to the processes $(-Z)$ and $X$ where $X$ is defined by $X_t=\int_{0}^{t}(-Z_{s})\,ds+B_{t}$, $t\geq0$. By Proposition \ref{B} we know that the process $B$ with $B_{t}:=\beta_{t}+\int_{0}^{t}Z_{s}\, ds$, $t\geq 0$, is a $\mathbb{G}$-Brownian motion stopped
at $\tau$ (see Proposition \ref{B}). Note that $X=\beta$. According to Proposition \ref{pro:innovationLEMMA} the process $b$ with 
$$
b_t=X_t-\int_0^{t}{^o(-Z)}_s\,ds=\beta_t+ \int_0^{t}{^o\!Z}_s\,ds=\beta_t+\int_{0}^{t}\mathbf{E}\left[\frac{\beta_{s}}{\tau-s}\,\mathbb{I}_{\{s<\tau\}}\big|\beta_{s}\right]\, ds\,,
$$
is a Brownian motion stopped at $\tau$, with respect to $\mathbb{F}^X=\mathbb{F}^\beta$,  where we have used Remark \ref{rem:We-make-the QWERTY} (iii) for the third equality. This completes the proof of Theorem \ref{TEO:semimartFb}.
\end{proof}
\begin{rem}
\label{rem:QUA COV 1} Note that the quadratic variation $\langle \beta,\beta\rangle$ of the information process $\beta$ is given by
\[
\langle \beta,\beta\rangle _{t}=\langle b,b\rangle _{t}=t\wedge\tau,\quad t\geq0\,.
\]
This follows immediately from the semimartingale decomposition \eqref{eq:semimart-dec-1} of $\beta$, see, for example, \cite[Ch. IV, (1.19)]{key-3} (the quadratic variation does not depend on the filtration, provided that the process is a continuous semimartingale).
\end{rem}
\section{\label{sec:Local-time-of} Example: pricing a Credit Default Swap}
\noindent A rather common financial contract that is traded in the credit market is the Credit Default Swap (CDS). A CDS with maturity $T>0$ is a contract between a buyer who wants to protect against the possibility that the default of a financial asset will take place before $T$, and a seller who provides such insurance. If the default has not occurred before $T$, the buyer will pay to the seller a fee until the maturity. But if the default time $\tau$ occurs before the maturity, the fee will be paid until the default and then the seller will give immediately to the buyer a pre-established amount of money $\delta$, called \textit{recovery}. The recovery may depend on the time at which the default occurs and, hence, it is modeled by a positive function $\delta:[0,T]\rightarrow\mathbb{R}_{+}$.
We follow the approach developed in \cite{key-29}, and the reader can find some details also in \cite[Ch. 7.8]{key-4}. We assume that the default-free spot interest rate $r$ is constant and that the fee that the buyer must pay to
the seller is paid continuously in time according to some rate $\kappa>0$, that is to say, the buyer has to pay an amount $\kappa dt$ during the time $dt$ until $\tau\wedge T$. In case the default time $\tau$ occurs before the maturity $T$, the seller will pay to the buyer a recovery $\delta(\tau)$ at time $\tau$. If the pricing
measure is $\mathbf{P}$ and the market filtration is $\mathbb{G}=(\mathcal{G}_{t})_{\geq0}$, the price $S_{t}(\kappa,\delta,T,r)$ at time $t$ of the CDS is given by
\begin{equation}
S_{t}\left(\kappa,\delta,r,T\right):=e^{rt}\,\mathbf{E}\left[e^{-r\tau}\delta\left(\tau\right)\mathbb{I}_{\left\{ t<\tau\leq T\right\} }-\int_{t\wedge\tau}^{T\wedge\tau}e^{-rv}\kappa \, dv|\mathcal{G}_{t}\right].\label{eq:CDS}
\end{equation}
We would like to make a comparison between the result obtained in
our model with the one presented in \cite{key-29}. However, differently from \cite{key-29}, we reduce ourselves to the simple situation where the market filtration can be either the minimal filtration $\mathbb{H}$ that makes $\tau$ a stopping time or the filtration $\mathbb{F}^{\beta}$ generated by the information process $\beta$. It is worth to note that the minimal filtration that makes $\tau$ a stopping time is of particular importance in the theory of enlargement of filtrations and its applications to mathematical models of credit risk. We refer to the series of papers \cite{key-23,key-24,key-25} and to the book \cite{key-4} for the topics of mathematical finance, where the filtration $\mathbb{H}$ is used, and to the books \cite{key-18} and \cite{key-7} for a discussion of the subject from a purely mathematical point of view. 

In the following, let us make the further assumption that $r=0$.
We first recall the pricing formula for the market filtration $\mathbb{G}=\mathbb{H}$.
\begin{prop}
If the market filtration $\mathbb{G}$ is the minimal filtration $\mathbb{H}=(\mathcal{H}_{t})_{t\geq0}$ that makes $\tau$ a stopping time, then the price $S_{t}(\kappa,\delta,0,T)$ at time $t$ of a CDS is given by 
\begin{align}
S_{t}\left(\kappa,\delta,0,T\right) & =\mathbf{E}\left[\delta\left(\tau\right)\mathbb{I}_{\left\{ t<\tau\leq T\right\} }-\kappa\left(\left(\tau\wedge T\right)-t\right)\mathbb{I}_{\left\{t<\tau\right\} }|\mathcal{H}_{t}\right]\nonumber \\
 & =\mathbb{I}_{\left\{t<\tau\right\} }\frac{1}{G\left(t\right)}\left(-\int_{t}^{T}\delta\left(v\right)dG\left(v\right)-\kappa\int_{t}^{T}G\left(v\right)dv\right)\label{eq:CDSeasy}
\end{align}
where $G(u):=\mathbf{P}(u<\tau)=1-F(u)$
is supposed to be strictly greater than 0 for every $u\in[0,T]$.
\end{prop}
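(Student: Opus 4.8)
The plan is to reduce the second equality in \eqref{eq:CDSeasy} to the classical ``key lemma'' of the reduced-form approach to credit risk and then to evaluate two deterministic Stieltjes integrals; the first equality will be immediate. Setting $r=0$ in \eqref{eq:CDS} makes every discount factor equal to $1$, so that $S_t(\kappa,\delta,0,T)=\mathbf{E}\big[\delta(\tau)\mathbb{I}_{\{t<\tau\le T\}}-\kappa\big((T\wedge\tau)-(t\wedge\tau)\big)\,\big|\,\mathcal{H}_t\big]$; since $t\le T$ one has $(T\wedge\tau)-(t\wedge\tau)=\big((T\wedge\tau)-t\big)\mathbb{I}_{\{t<\tau\}}$ (the left-hand side vanishing on $\{\tau\le t\}$), which is the first line of \eqref{eq:CDSeasy}.

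For the second line I would write the random variable inside the conditional expectation as $Y\,\mathbb{I}_{\{t<\tau\}}$ with $Y:=\delta(\tau)\mathbb{I}_{\{\tau\le T\}}-\kappa\big((T\wedge\tau)-t\big)$; here $Y$ is integrable, the first summand being an integrable function of $\tau$ (implicit in the well-posedness of \eqref{eq:CDS}) and the second bounded by $\kappa T$. Since $\mathbb{H}$ is the smallest filtration making $\tau$ a stopping time, the trace of $\mathcal{H}_t$ on $\{t<\tau\}$ is $\mathbf{P}$-trivial, and the standard computation of the hazard-process approach (see, e.g., the book \cite{key-21} of Bielecki and Rutkowski) gives, for every integrable $Y$,
\[
\mathbf{E}\big[Y\,\mathbb{I}_{\{t<\tau\}}\,\big|\,\mathcal{H}_t\big]=\mathbb{I}_{\{t<\tau\}}\,\frac{\mathbf{E}\big[Y\,\mathbb{I}_{\{t<\tau\}}\big]}{\mathbf{P}(t<\tau)}=\frac{\mathbb{I}_{\{t<\tau\}}}{G(t)}\,\mathbf{E}\big[Y\,\mathbb{I}_{\{t<\tau\}}\big]\,,
\]
the division being legitimate because $G(t)\ge G(T)>0$. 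It then remains only to identify $\mathbf{E}[Y\,\mathbb{I}_{\{t<\tau\}}]$ with the bracketed term in \eqref{eq:CDSeasy}.

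This last step is elementary, using $\mathbf{P}_\tau=dF=-dG$. For the recovery leg, $\mathbf{E}\big[\delta(\tau)\mathbb{I}_{\{t<\tau\le T\}}\big]=\int_{(t,T]}\delta(v)\,dF(v)=-\int_{(t,T]}\delta(v)\,dG(v)$. For the fee leg, on $\{t<\tau\}$ one has $(T\wedge\tau)-t=\int_t^T\mathbb{I}_{\{s<\tau\}}\,ds$ (the integrand vanishing for $s\ge t$ outside $\{t<\tau\}$ as well), so Tonelli's theorem yields $\mathbf{E}\big[\big((T\wedge\tau)-t\big)\mathbb{I}_{\{t<\tau\}}\big]=\int_t^T\mathbf{P}(s<\tau)\,ds=\int_t^T G(s)\,ds$. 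Substituting both expressions into the displayed identity for $\mathbf{E}[Y\,\mathbb{I}_{\{t<\tau\}}\mid\mathcal{H}_t]$ reproduces exactly the right-hand side of \eqref{eq:CDSeasy}.

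I expect no serious obstacle here: the only point needing care is the key-lemma step, namely the passage from conditioning on $\mathcal{H}_t$ to dividing the unconditional expectation by $G(t)$, which is where the minimality of $\mathbb{H}$ and the standing assumption $G>0$ on $[0,T]$ are used; everything else is the two routine integral evaluations together with one application of Tonelli's theorem.
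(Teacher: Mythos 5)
Your proof is correct. Note that the paper does not actually prove this proposition internally: its entire proof is the citation ``See \cite[Lemma 2.1]{key-29}''. What you have written is precisely the standard argument behind that cited lemma -- the reduction to the ``key lemma'' of the reduced-form approach, $\mathbf{E}\left[Y\,\mathbb{I}_{\{t<\tau\}}|\mathcal{H}_{t}\right]=\mathbb{I}_{\{t<\tau\}}\,\mathbf{E}\left[Y\,\mathbb{I}_{\{t<\tau\}}\right]/G(t)$, which rests on the trace of $\mathcal{H}_{t}$ on $\{t<\tau\}$ being $\mathbf{P}$-trivial, followed by the two Stieltjes-integral evaluations (the identity $((T\wedge\tau)-t)\mathbb{I}_{\{t<\tau\}}=\int_{t}^{T}\mathbb{I}_{\{s<\tau\}}\,ds$ and Tonelli for the fee leg, $dF=-dG$ for the recovery leg). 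So your write-up supplies a self-contained proof where the paper defers to the literature; the only thing it buys beyond the citation is making explicit where the minimality of $\mathbb{H}$ and the hypothesis $G>0$ on $[0,T]$ enter, and it is consistent with how the analogous Proposition for $\mathbb{G}=\mathbb{F}^{\beta}$ is proved in the paper (there the key-lemma step is replaced by the Bayes estimate of Corollary \ref{LEM:prop_cond_exp_tau}).
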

\begin{proof}
See \cite[Lemma 2.1]{key-29}.
\end{proof}
\vspace{-5pt}
In our model, i.e., if  the market filtration is the filtration $\mathbb{F}^{\beta}=(\mathcal{F}_{t}^{\beta})_{t\geq0}$
generated by the information process $\beta$, the pricing formula
is given in the following proposition.
\begin{prop}
\label{lem:If-,-i.e.,}If $\mathbb{G}=\mathbb{F}^{\beta}$, then the
price $S_{t}(\kappa,\delta,0,T)$ at time $t$ of a CDS
is given by
\vspace{-5pt}
\begin{align}
S_{t}\left(\kappa,\delta,0,T\right) & =\mathbf{E}\left[\delta\left(\tau\right)\mathbb{I}_{\left\{ t<\tau\leq T\right\} }-\kappa\left(\left(\tau\wedge T\right)-t\right)\mathbb{I}_{\left\{t<\tau\right\} }|\mathcal{F}_{t}^{\beta}\right]\label{price-formula} \\
 & =\mathbb{I}_{\left\{t<\tau\right\} }\left(-\int_{t}^{T}\delta\left(v\right)d_{v}\Psi_{t}\left(v\right)-\kappa\int_{t}^{T}\Psi_{t}\left(v\right)dv\right)\label{eq:CDSmy}
\end{align}
where $\Psi_{t}(u):=\mathbf{P}(u<\tau|\mathcal{F}_{t}^{\beta}) =\int_{u}^{+\infty}\phi_{t}(v,\beta_{t})\,dF(v)$ and the writing $d_{v}\Psi_{t}(v)$ in the above formula means that the integral is computed using $v$ as integrating variable.
\end{prop}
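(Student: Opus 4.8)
The first of the two displayed equalities in the statement is simply the specialisation of the general pricing formula \eqref{eq:CDS} to the case $r=0$ (the factor $e^{rt}$ becomes $1$ and $e^{-rv}\equiv 1$), so the entire content lies in establishing the second equality \eqref{eq:CDSmy}. The plan is to evaluate the conditional expectation in \eqref{price-formula} by splitting it into the protection (recovery) leg $\mathbf{E}[\delta(\tau)\mathbb{I}_{\{t<\tau\le T\}}\mid\mathcal{F}_t^\beta]$ and the premium leg $\kappa\,\mathbf{E}[((\tau\wedge T)-t)\mathbb{I}_{\{t<\tau\}}\mid\mathcal{F}_t^\beta]$. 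Both are well defined: $(\tau\wedge T)-t\le T$ is bounded, and we assume as usual that $\delta$ is bounded on $[0,T]$, so that $\delta(\tau)\mathbb{I}_{\{\tau\le T\}}$ is integrable.

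For the protection leg I would apply Proposition \ref{cor:Let--be-BIS} with the function $g(r,x):=\delta(r)\,\mathbb{I}_{(t,T]}(r)$, which does not depend on $x$. Since $\{t<\tau\le T\}\cap\{\tau\le t\}=\emptyset$, the term $g(\tau,0)\mathbb{I}_{\{\tau\le t\}}$ vanishes and one obtains
\[
\mathbf{E}\big[\delta(\tau)\mathbb{I}_{\{t<\tau\le T\}}\,\big|\,\mathcal{F}_t^\beta\big]=\mathbb{I}_{\{t<\tau\}}\int_{(t,T]}\delta(r)\,\phi_t(r,\beta_t)\,dF(r)\,.
\]
To match this with the first term of \eqref{eq:CDSmy} I would then observe that $v\mapsto\Psi_t(v)=\int_{(v,+\infty)}\phi_t(r,\beta_t)\,dF(r)$ is non-increasing, hence of bounded variation on $[t,T]$, and that the signed Lebesgue--Stieltjes measure it generates on $(t,T]$ is precisely $-\phi_t(v,\beta_t)\,dF(v)$; consequently $\int_{(t,T]}\delta(r)\phi_t(r,\beta_t)\,dF(r)=-\int_t^T\delta(v)\,d_v\Psi_t(v)$.

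For the premium leg I would use the pathwise identity $((\tau\wedge T)-t)\,\mathbb{I}_{\{t<\tau\}}=\int_t^T\mathbb{I}_{\{v<\tau\}}\,dv$ (the right-hand side equals $0$ on $\{\tau\le t\}$, $\tau-t$ on $\{t<\tau\le T\}$ and $T-t$ on $\{T<\tau\}$). Since the integrand is non-negative, the conditional version of Tonelli's theorem lets one interchange the $dv$-integration with the conditional expectation, producing $\int_t^T\mathbf{P}(v<\tau\mid\mathcal{F}_t^\beta)\,dv$. Applying Proposition \ref{cor:Let--be-BIS} once more, now with $g(r,x):=\mathbb{I}_{\{v<r\}}$ for each fixed $v\ge t$ (again the $\{\tau\le t\}$-term drops out because $v\ge t$), gives $\mathbf{P}(v<\tau\mid\mathcal{F}_t^\beta)=\mathbb{I}_{\{t<\tau\}}\int_{(v,+\infty)}\phi_t(r,\beta_t)\,dF(r)=\mathbb{I}_{\{t<\tau\}}\Psi_t(v)$; pulling the $v$-independent indicator out of the $dv$-integral yields $\mathbb{I}_{\{t<\tau\}}\int_t^T\Psi_t(v)\,dv$. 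Subtracting $\kappa$ times this expression from the protection leg gives exactly \eqref{eq:CDSmy}.

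I do not expect a genuine obstacle, since all the analytical input — the explicit a posteriori density $\phi_t$ and the conditional laws of $\tau$ — is already available from Section \ref{sec:Conditional-Expectations} and Proposition \ref{cor:Let--be-BIS}. The only points that need a little care are bookkeeping ones: first, verifying that the Lebesgue--Stieltjes measure generated by $v\mapsto\Psi_t(v)$ on $(t,T]$ really is $-\phi_t(v,\beta_t)\,dF(v)$, with the correct treatment of the left endpoint $t$ (where $\phi_t(t,\cdot)=0$, so no spurious jump appears even if $F$ has an atom at $t$) and of possible atoms of $F$ inside $(t,T]$; and second, justifying the interchange of the $dv$-integral and the conditional expectation in the premium leg, for which the non-negativity of $\mathbb{I}_{\{v<\tau\}}$ together with the existence of a jointly measurable version of $(v,\omega)\mapsto\mathbb{I}_{\{t<\tau\}}\Psi_t(v)$ suffices.
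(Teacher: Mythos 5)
Your proof is correct and follows the same overall strategy as the paper: split the price into the protection leg and the premium leg and evaluate each via the a posteriori density $\phi_t$ from Section \ref{sec:Conditional-Expectations}. For the protection leg your argument is essentially identical to the paper's (the paper invokes Corollary \ref{LEM:prop_cond_exp_tau} rather than Proposition \ref{cor:Let--be-BIS}, but these give the same formula when $g$ does not depend on the second variable), and the identification of the Lebesgue--Stieltjes measure of $v\mapsto\Psi_t(v)$ with $-\phi_t(v,\beta_t)\,dF(v)$ on $(t,T]$, including your remark about the vanishing of $\varphi_t(r,\cdot)$ for $r\le t$, is the same bookkeeping the paper performs implicitly when it writes $\int_t^T\delta(v)\,\phi_t(v,\beta_t)\,dF(v)=\int_t^T\delta(v)\,d_v\Phi_t(v)=-\int_t^T\delta(v)\,d_v\Psi_t(v)$. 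Where you genuinely diverge is the premium leg: the paper computes $\mathbf{E}[T\wedge\tau\,|\,\mathcal{F}_t^\beta]\,\mathbb{I}_{\{t<\tau\}}$ directly as a Stieltjes integral $\int_t^T v\,d_v\Phi_t(v)+T(1-\Phi_t(T))$ and then integrates by parts, using $\Psi_t(t)=1$ on $\{t<\tau\}$ to absorb the boundary term, whereas you use the pathwise layer-cake identity $((\tau\wedge T)-t)\mathbb{I}_{\{t<\tau\}}=\int_t^T\mathbb{I}_{\{v<\tau\}}\,dv$ and a conditional Tonelli interchange to land on $\int_t^T\mathbf{P}(v<\tau\,|\,\mathcal{F}_t^\beta)\,dv=\mathbb{I}_{\{t<\tau\}}\int_t^T\Psi_t(v)\,dv$. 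Your route trades the integration by parts (and the care it requires with boundary terms and atoms of $F$) for a Fubini-type interchange, which is harmless here because the integrand is bounded, nonnegative, and admits the jointly measurable version $\mathbb{I}_{\{t<\tau\}}\Psi_t(v)$; both arguments are sound and of comparable length.
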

\vspace{-10pt}
\begin{proof}
Concerning the first term in \eqref{price-formula}, in view of Corollary \ref{LEM:prop_cond_exp_tau} and \eqref{eq:densitybeta2}, we have,  $\mathbf{P}$-a.s.:
\begin{align*}
\mathbf{E}\left[\delta\left(\tau\right)\mathbb{I}_{\left\{ t<\tau\leq T\right\} }|\mathcal{F}_{t}^{\beta}\right]&=\mathbb{I}_{\left\{t<\tau\right\} }\int_{t}^{T}\delta\left(v\right)\phi_{t}\left(v,\beta_{t}\right)dF\left(v\right)\\
&=\mathbb{I}_{\left\{t<\tau\right\} }\int_{t}^{T}\delta\left(v\right)d_{v}\Phi_{t}\left(v\right)
\end{align*}
where $\Phi_{t}(v):=\mathbf{P}(\tau\leq v|\mathcal{F}_{t}^{\beta})=1-\Psi_{t}(v)$ and hence
\begin{align}\label{Ex1}
\mathbf{E}\left[\delta\left(\tau\right)\mathbb{I}_{\left\{ t<\tau\leq T\right\} }|\mathcal{F}_{t}^{\beta}\right]&=
\mathbb{I}_{\left\{t<\tau\right\} }\int_{t}^{T}\delta\left(v\right)d_{v}\Phi_{t}\left(v\right)\\
&=-\mathbb{I}_{\left\{t<\tau\right\} }\int_{t}^{T}\delta\left(v\right)d_{v}\Psi_{t}\left(v\right)\,.
\end{align}
Concerning the second term in \eqref{price-formula}, again in view of Corollary \ref{LEM:prop_cond_exp_tau} and \eqref{eq:densitybeta2}, we obtain $\mathbf{P}$-a.s.,
\begin{eqnarray}
\nonumber\mathbf{E}\left[T\wedge\tau|\mathcal{F}_{t}^{\beta}\right]\mathbb{I}_{\left\{t<\tau\right\} } &=&\left(\int_{t}^{T}vd_{v}\Phi_{t}\left(v\right)+T\left(1-\Phi_{t}\left(T\right)\right)\right)\mathbb{I}_{\left\{t<\tau\right\} }\\
\nonumber&=&\left(-\int_{t}^{T}vd_{v}\Psi_{t}\left(v\right)+T\Psi_{t}\left(T\right)\right)\mathbb{I}_{\left\{t<\tau\right\}}\\
&=&\left(\int_{t}^{T}\Psi_{t}\left(v\right)dv+t\Psi_{t}\left(t\right)\right)\mathbb{I}_{\left\{t<\tau\right\}}\,. \label{Ex2}
\end{eqnarray}
Inserting \eqref{Ex1} and \eqref{Ex2} into the price formula \eqref{price-formula} and noting that $\Psi_t(t)=1$ on $\{t<\tau\}$ $\mathbf{P}$-a.s., we obtain the asserted result \eqref{eq:CDSmy}. The proof of the proposition is completed. 
\end{proof}
Although there is a formal analogy between \eqref{eq:CDSeasy} and \eqref{eq:CDSmy}, the pricing formulas are quite different. The second formula \eqref{eq:CDSmy} is much more informative because it uses the observation of $\beta_t$ and the Bayes estimate of $\tau$, namely, the a posteriori distribution of $\tau$ after observing $\beta_t$. Given that $t<\tau$, the price in \eqref{eq:CDSeasy}  is a deterministic value, while in \eqref{eq:CDSmy} the price depends on the observation $\beta_t$.
\begin{rem}
The so-called \textit{fair spread} of a CDS at time $t$ is the value $\kappa^{*}$ such that $S_{t}(\kappa^{*},\delta,0,T)=0$.
In our model we have that
\[
\kappa^{*}=-\frac{\int_{t}^{T}\delta\left(r\right)d_{r}\Psi_{t}\left(r\right)}{\int_{t}^{T}\Psi_{t}\left(r\right)dr}
\]
while, in the simpler situation where the market filtration is $\mathbb{H}$,
the fair spread is given by
\[
\kappa^{*}=-\frac{\int_{t}^{T}\delta\left(r\right)dG\left(r\right)}{\int_{t}^{T}G\left(r\right)dr}.
\]
\end{rem}
When dealing with problems related to credit risk, it is of interest to consider the case where the market filtration is a filtration $\mathbb{G}$ obtained by progressively enlarging a reference filtration $\mathbb{F}$ with another filtration, $\mathbb{D}=(\mathcal{D}_{t})_{t\geq0}$,
which is responsible for modeling the information associated with the default time $\tau$. Traditionally, the filtration $\mathbb{D}$
has been settled to be equal to the filtration $\mathbb{H}$ generated
by the single-jump process occurring at $\tau$. We intend
to consider a different setting where the reference filtration $\mathbb{F}$ will be enlarged with the filtration $\mathbb{F}^{\beta}$ and we will provide the relative pricing formulas of credit instruments like the CDS. However, this is beyond the scope of the present paper.
\begin{appendix}
\section{The Bayes Formula}
\noindent Here the basic results on the Bayes formula are recalled without proofs. For further details we refer to \cite[Ch. II.7.8]{key-8} or any book on Bayesian Statistics.

Let $\tau$ and $X$ be random variables on a probability space $(\Omega,\,\mathcal{F},\,\mathbf{P})$ with values in measurable spaces $(E_{1},\,\mathcal{E}_{1})$
and $(E_{2},\,\mathcal{E}_{2})$, respectively. Let $\mathbf{P}_{r}$
be a regular conditional distribution of $X$ with respect to $\tau=r$,
i.e., for $B\in\mathcal{E}_{2}$, $\mathbf{P}_{r}(B)=\mathbf{P}(X\in B|\tau=r)$,
$\mathbf{P}_{\tau}$-a.s. By $\mathbf{P}_{\tau}$ we denote the distribution of $\tau$ on $(E_{1},\,\mathcal{E}_{1})$ (called the \textit{a priori} distribution). Moreover, for
$C\in\mathcal{E}_{1}$, let $\mathbf{G}_{C}$ be defined as follows:
\begin{align}
\mathbf{G}_{C}\left(B\right) & :=\int_{C}\mathbf{P}_{r}\left(B\right)\mathbf{P}_{\tau}\left(dr\right),\, B\in\mathcal{E}_{2}\,.\label{eq:starUAN}
\end{align}
We are interested in the \textit{a posteriori} probability $\mathbf{Q}(x,C):=\mathbf{P}(\tau\in C|X=x)$,
for $x\in E_{2}$ and $C\in\mathcal{E}_{1}$. By $\mathbf{P}_{X}$ we denote the law of $X$.
\begin{thm}[Bayes' Theorem]
We have $\mathbf{G}_{C}\ll\mathbf{P}_{X}$ and 
\[
\mathbf{Q}\left(x,C\right)=\frac{d\mathbf{G}_{C}}{d\mathbf{P}_{X}}\left(x\right),\quad x\in E_{2},\; C\in\mathcal{E}_{1},\;\mathbf{P}_{X}\textrm{-a.s.}
\]
\end{thm}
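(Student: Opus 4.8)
The plan is to show first that the set function $\mathbf{G}_C$ introduced in \eqref{eq:starUAN} is simply the joint law of $(\tau,X)$ evaluated on a measurable rectangle, namely
\[
\mathbf{G}_C(B) = \mathbf{P}(\tau \in C,\, X \in B),\qquad B\in\mathcal{E}_2,\ C\in\mathcal{E}_1,
\]
and then to deduce both assertions from this identity combined with the Radon--Nikodym theorem. To obtain the identity, recall that by definition of the regular conditional distribution the map $r\mapsto\mathbf{P}_r(B)$ is a version of $\mathbf{P}(X\in B\mid\tau=r)$; hence, using the change-of-variables rule for conditional expectations (i.e. $\mathbf{E}[\mathbb{I}_{\{\tau\in C\}}\,\mathbf{P}(X\in B\mid\tau)]=\int_C\mathbf{P}_r(B)\,\mathbf{P}_\tau(dr)$) together with the tower property,
\[
\mathbf{G}_C(B)=\int_C\mathbf{P}_r(B)\,\mathbf{P}_\tau(dr)=\mathbf{E}\big[\mathbb{I}_{\{\tau\in C\}}\,\mathbf{E}[\mathbb{I}_{\{X\in B\}}\mid\tau]\big]=\mathbf{E}\big[\mathbb{I}_{\{\tau\in C\}}\,\mathbb{I}_{\{X\in B\}}\big]=\mathbf{P}(\tau\in C,\,X\in B).
\]

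From this identity, absolute continuity is immediate: for every $B\in\mathcal{E}_2$ one has $\mathbf{G}_C(B)\le\mathbf{P}(X\in B)=\mathbf{P}_X(B)$, so $\mathbf{P}_X(B)=0$ forces $\mathbf{G}_C(B)=0$, i.e. $\mathbf{G}_C\ll\mathbf{P}_X$. Since $\mathbf{G}_C$ is moreover a finite (indeed sub-probability) measure on $(E_2,\mathcal{E}_2)$, the Radon--Nikodym theorem provides an $\mathcal{E}_2$-measurable density $d\mathbf{G}_C/d\mathbf{P}_X$, determined up to $\mathbf{P}_X$-null sets.

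It remains to identify this density with the map $x\mapsto\mathbf{Q}(x,C)=\mathbf{P}(\tau\in C\mid X=x)$. By the characterizing property of the conditional probability given $X$ (again via the change-of-variables rule, now for the $\sigma$-algebra $\sigma(X)$), the function $x\mapsto\mathbf{Q}(x,C)$ satisfies, for every $B\in\mathcal{E}_2$,
\[
\int_B\mathbf{Q}(x,C)\,\mathbf{P}_X(dx)=\mathbf{E}\big[\mathbb{I}_{\{X\in B\}}\,\mathbf{P}(\tau\in C\mid X)\big]=\mathbf{P}(\tau\in C,\,X\in B).
\]
Comparing with the first display, $\int_B\mathbf{Q}(x,C)\,\mathbf{P}_X(dx)=\mathbf{G}_C(B)=\int_B\frac{d\mathbf{G}_C}{d\mathbf{P}_X}(x)\,\mathbf{P}_X(dx)$ for all $B\in\mathcal{E}_2$; since both integrands are $\mathcal{E}_2$-measurable, the uniqueness part of the Radon--Nikodym theorem (equivalently, uniqueness of conditional expectations) gives $\mathbf{Q}(\cdot,C)=d\mathbf{G}_C/d\mathbf{P}_X$, $\mathbf{P}_X$-a.s.

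There is no genuine obstacle here: the statement is essentially a bookkeeping exercise built on the Radon--Nikodym theorem and the disintegration/change-of-variables formula for conditional expectations. The only points deserving care are (a) that $r\mapsto\mathbf{P}_r(B)$ and $x\mapsto\mathbf{Q}(x,C)$ are genuinely measurable in their free variable --- this is exactly what the regularity of the conditional distributions buys us, or it can be arranged by a monotone class argument --- and (b) keeping track of the fact that the exceptional $\mathbf{P}_X$-null set on which the identity $\mathbf{Q}(\cdot,C)=d\mathbf{G}_C/d\mathbf{P}_X$ may fail depends on the chosen set $C$.
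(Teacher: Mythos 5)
Your proof is correct: the identification $\mathbf{G}_C(B)=\mathbf{P}(\tau\in C,\,X\in B)$ via the transfer formula and the tower property, the resulting domination $\mathbf{G}_C\le\mathbf{P}_X$, and the identification of the Radon--Nikodym density with $\mathbf{Q}(\cdot,C)$ through the defining property of $\mathbf{P}(\tau\in C\mid X=x)$ is exactly the standard argument. The paper itself states this theorem without proof, referring to Shiryaev (Ch.\ II.7.8), and your argument is precisely the one found there, including the correct caveat that the exceptional $\mathbf{P}_X$-null set depends on $C$.
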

Now we assume that there exists a $\sigma$-finite measure $\mu$
on $(E_{2},\,\mathcal{E}_{2})$ such that $\mathbf{P}_{r}\ll\mu$, for all $r\in E_1$. Furthermore, we assume that there is a measurable function $p$ on $(E_{1}\times E_{2},\,\mathcal{E}_{1}\otimes\mathcal{E}_{2})$
such that
\[
p\left(r,x\right)=\frac{d\mathbf{P}_{r}}{d\mu}\left(x\right),\quad \mu\textrm{-a.e.}, \; r\in E_{1}\,.
\]
\begin{lem}
We have that
\begin{enumerate}
\item[(i)] \ $\mathbf{G}_{C}\ll\mu$ and 
$\displaystyle{\frac{d\mathbf{G}_{C}}{d\mu}(x)=\int_{C}p(r,x)\mathbf{P}_{\tau}(dr),\;\mu\textrm{-a.e.}}$,
\item[(ii)] \ $\mathbf{P}_{X}\ll\mu$ and 
$\displaystyle{\frac{d\mathbf{P}_{X}}{d\mu}(x)=\int_{E_{1}}p(r,x)\mathbf{P}_{\tau}(dr),\;\mu\textrm{-a.e.}}$
\end{enumerate}
\end{lem}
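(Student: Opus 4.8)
The plan is to reduce everything to a single application of Tonelli's theorem, exploiting that $p$ is jointly measurable and nonnegative. First I would fix $C\in\mathcal{E}_{1}$ and, for an arbitrary $B\in\mathcal{E}_{2}$, use the hypothesis $\mathbf{P}_{r}\ll\mu$ with density $p(r,\cdot)$ to write $\mathbf{P}_{r}(B)=\int_{B}p(r,x)\,\mu(dx)$ for every $r\in E_{1}$. Substituting this into the defining formula \eqref{eq:starUAN} for $\mathbf{G}_{C}$ turns $\mathbf{G}_{C}(B)$ into the iterated integral $\int_{C}\int_{B}p(r,x)\,\mu(dx)\,\mathbf{P}_{\tau}(dr)$. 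Since $p\geq0$, $\mathbf{P}_{\tau}$ is a probability measure and $\mu$ is $\sigma$-finite, Tonelli's theorem applies and permits interchanging the order of integration, yielding $\mathbf{G}_{C}(B)=\int_{B}\bigl(\int_{C}p(r,x)\,\mathbf{P}_{\tau}(dr)\bigr)\,\mu(dx)$. As this holds for every $B\in\mathcal{E}_{2}$, it is exactly the assertion that $\mathbf{G}_{C}\ll\mu$ with $\mu$-density $x\mapsto\int_{C}p(r,x)\,\mathbf{P}_{\tau}(dr)$, which is part (i). The $\mathcal{E}_{2}$-measurability of this partial integral is again part of Tonelli's statement.

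For part (ii) the key observation is that $\mathbf{P}_{X}=\mathbf{G}_{E_{1}}$. Indeed, taking $C=E_{1}$ in \eqref{eq:starUAN} and recalling that $\mathbf{P}_{r}$ is a regular conditional distribution of $X$ given $\tau=r$, one has, for every $B\in\mathcal{E}_{2}$, $\mathbf{G}_{E_{1}}(B)=\int_{E_{1}}\mathbf{P}(X\in B\mid\tau=r)\,\mathbf{P}_{\tau}(dr)=\mathbf{E}\bigl[\mathbf{P}(X\in B\mid\tau)\bigr]=\mathbf{P}(X\in B)=\mathbf{P}_{X}(B)$. Hence (ii) is just (i) specialized to $C=E_{1}$, the density becoming $x\mapsto\int_{E_{1}}p(r,x)\,\mathbf{P}_{\tau}(dr)$.

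I do not expect a genuine obstacle here; the only points that require a little care are the measurability of the partial integrals and the legitimacy of the Fubini--Tonelli interchange, both of which are supplied by the standing hypotheses that $p$ is $\mathcal{E}_{1}\otimes\mathcal{E}_{2}$-measurable and that $\mu$ is $\sigma$-finite. It is also worth noting in passing that $\mathbf{G}_{C}$ is a bona fide finite measure on $\mathcal{E}_{2}$ (countable additivity follows from monotone convergence in \eqref{eq:starUAN}), so that speaking of its Radon--Nikodym derivative with respect to $\mu$ is justified; this fact is in any case already implicit in the statement of Bayes' Theorem recalled above.
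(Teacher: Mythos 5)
Your argument is correct and complete: the Tonelli interchange in (i) and the identification $\mathbf{P}_{X}=\mathbf{G}_{E_{1}}$ in (ii) are exactly what is needed. The paper itself states this lemma without proof (it is recalled from the literature, with a reference to Shiryaev), so there is no proof in the paper to compare against; your write-up is the standard argument that any such reference would supply, and the points you flag -- joint measurability of $p$, $\sigma$-finiteness of $\mu$, and countable additivity of $\mathbf{G}_{C}$ -- are indeed the only ones requiring care.
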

\begin{cor}[Bayes' Formula]
\label{cor:(Bayes-formula)-a.s.}
\[\mathbf{P}\left(\tau\in C|X=x\right)=
\mathbf{Q}\left(x,C\right)=\frac{{\displaystyle \int_{C}p\left(r,x\right)\mathbf{P}_{\tau}\left(dr\right)}}{{\displaystyle \int_{E_{1}}p\left(v,x\right)\mathbf{P}_{\tau}\left(dv\right)}},\quad \mathbf{P}_{X}\textrm{-a.s.},\; C\in\mathcal{E}_{1}\,.
\]
\end{cor}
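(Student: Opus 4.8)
The plan is to read the Corollary off from Bayes' Theorem together with the two Radon--Nikodym identities of the immediately preceding Lemma, using the chain rule for densities. Since by definition $\mathbf{Q}(x,C)=\mathbf{P}(\tau\in C\mid X=x)$, only the closed-form expression for $\mathbf{Q}$ has to be established.

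First I would recall Bayes' Theorem, which already gives $\mathbf{G}_C\ll\mathbf{P}_X$ and $\mathbf{Q}(x,C)=\dfrac{d\mathbf{G}_C}{d\mathbf{P}_X}(x)$, $\mathbf{P}_X$-a.s. Then I would bring in part (ii) of the Lemma, $\mathbf{P}_X\ll\mu$ with $\dfrac{d\mathbf{P}_X}{d\mu}(x)=\int_{E_{1}}p(v,x)\mathbf{P}_{\tau}(dv)=:q(x)$ for $\mu$-a.e.\ $x$, and part (i), $\mathbf{G}_C\ll\mu$ with $\dfrac{d\mathbf{G}_C}{d\mu}(x)=\int_{C}p(r,x)\mathbf{P}_{\tau}(dr)$ for $\mu$-a.e.\ $x$. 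Since $\mathbf{G}_C\ll\mathbf{P}_X\ll\mu$, the chain rule for Radon--Nikodym derivatives yields
\[
\frac{d\mathbf{G}_C}{d\mu}(x)=\frac{d\mathbf{G}_C}{d\mathbf{P}_X}(x)\,\frac{d\mathbf{P}_X}{d\mu}(x),\qquad \mu\textrm{-a.e.}
\]
On the set $\{q>0\}$ this gives $\mathbf{Q}(x,C)=\dfrac{\int_{C}p(r,x)\mathbf{P}_{\tau}(dr)}{\int_{E_{1}}p(v,x)\mathbf{P}_{\tau}(dv)}$, $\mathbf{P}_X$-a.s., which is exactly the asserted formula.

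The only point needing a word of care is the denominator: I would note that $\mathbf{P}_X(\{q=0\})=\int_{\{q=0\}}q\,d\mu=0$, so $\{q=0\}$ is $\mathbf{P}_X$-negligible and the quotient above is well defined $\mathbf{P}_X$-a.s.; measurability of $x\mapsto\int_{C}p(r,x)\mathbf{P}_{\tau}(dr)$ is guaranteed by the joint $\mathcal{E}_{1}\otimes\mathcal{E}_{2}$-measurability of $p$ via Tonelli's theorem. There is no genuine obstacle here; the content is entirely contained in Bayes' Theorem and the Lemma, and the proof amounts to a one-line chain-rule computation plus null-set bookkeeping. Alternatively, one can bypass the chain rule and verify directly that for every $D\in\mathcal{E}_{2}$ one has $\int_{D}\bigl(\int_{C}p(r,x)\mathbf{P}_{\tau}(dr)\bigr)\mu(dx)=\mathbf{G}_C(D)=\int_{D}\mathbf{Q}(x,C)\,q(x)\,\mu(dx)$, using Tonelli on the right-hand side of \eqref{eq:starUAN}, and then conclude by uniqueness of densities with respect to $\mu$; but invoking the already-stated results is shorter.
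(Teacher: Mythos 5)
Your derivation is correct. The paper itself gives no proof of this corollary: the Appendix explicitly states that the Bayes results are ``recalled without proofs,'' with a reference to Shiryaev, so there is nothing to compare against line by line. What you supply --- combining Bayes' Theorem ($\mathbf{Q}(x,C)=\frac{d\mathbf{G}_C}{d\mathbf{P}_X}(x)$) with the two Radon--Nikodym densities of the preceding Lemma via the chain rule $\frac{d\mathbf{G}_C}{d\mu}=\frac{d\mathbf{G}_C}{d\mathbf{P}_X}\cdot\frac{d\mathbf{P}_X}{d\mu}$, and disposing of the set $\{q=0\}$ by noting it is $\mathbf{P}_X$-null --- is exactly the standard argument the authors are implicitly invoking, and your null-set and measurability bookkeeping is the only point requiring care; you handle it correctly.
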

\begin{cor}[A Posteriori Density]
\label{cor:The-conditional-density} The a posteriori density $q(r,x)$ of $\tau$ under $X=x$ with respect to $\mathbf{P}_{\tau}$ is given by
\[
q\left(r,x\right):=p\left(r,x\right) \left(\int_{E_{1}}p\left(v,x\right)\mathbf{\mathbf{P}_{\tau}}\left(dv\right)\right)^{-1},\quad r\in E_1,\; x\in E_2\,.
\]
Thus 
\[
\mathbf{P}\left(\tau\in C|X=x\right)=\int_{C}q\left(r,x\right)\mathbf{\mathbf{P}_{\tau}}\left(dr\right),\quad \mathbf{P}_{X}\textrm{-a.s.},\; C\in\mathcal{E}_{1}\,.
\]
\end{cor}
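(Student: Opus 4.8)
The plan is to read off the corollary as an immediate reformulation of Bayes' Formula (Corollary~\ref{cor:(Bayes-formula)-a.s.}). First I would record that $q$ is a well-defined measurable function on $\mathbf{P}_{X}$-almost all of $E_{1}\times E_{2}$: the map $x\mapsto\int_{E_{1}}p(v,x)\mathbf{P}_{\tau}(dv)$ is $\mathcal{E}_{2}$-measurable (being obtained from the jointly measurable $p$ and the finite measure $\mathbf{P}_{\tau}$ by a Tonelli-type argument), and by part~(ii) of the Lemma preceding Corollary~\ref{cor:(Bayes-formula)-a.s.} it is a Radon--Nikodym density of the probability measure $\mathbf{P}_{X}$ with respect to $\mu$; hence it is finite $\mu$-a.e., and the set where it vanishes is $\mathbf{P}_{X}$-null. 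Consequently this denominator lies in $(0,+\infty)$ for $\mathbf{P}_{X}$-a.e.\ $x$, so $q(r,x)=p(r,x)\bigl(\int_{E_{1}}p(v,x)\mathbf{P}_{\tau}(dv)\bigr)^{-1}$ makes sense for such $x$ and all $r$.

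Next I would substitute into Bayes' Formula. For each fixed $C\in\mathcal{E}_{1}$, Corollary~\ref{cor:(Bayes-formula)-a.s.} states
\[
\mathbf{P}\left(\tau\in C\mid X=x\right)=\frac{\int_{C}p(r,x)\mathbf{P}_{\tau}(dr)}{\int_{E_{1}}p(v,x)\mathbf{P}_{\tau}(dv)},\qquad\mathbf{P}_{X}\textrm{-a.s.}
\]
Since for $\mathbf{P}_{X}$-a.e.\ $x$ the denominator is a finite nonzero constant independent of the integration variable $r$, it may be brought inside the numerator, so the right-hand side equals $\int_{C}p(r,x)\bigl(\int_{E_{1}}p(v,x)\mathbf{P}_{\tau}(dv)\bigr)^{-1}\mathbf{P}_{\tau}(dr)=\int_{C}q(r,x)\mathbf{P}_{\tau}(dr)$, which is exactly the asserted identity. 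Specialising to $C=E_{1}$ shows in addition that $\int_{E_{1}}q(r,x)\mathbf{P}_{\tau}(dr)=1$ for $\mathbf{P}_{X}$-a.e.\ $x$, so $q(\cdot,x)$ is indeed (a version of) the a posteriori density of $\tau$ under $X=x$ with respect to $\mathbf{P}_{\tau}$.

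There is no real obstacle here; the proof is pure bookkeeping of Radon--Nikodym derivatives and null sets, the only point to keep straight being that the formula defining $q$ is written for all $(r,x)$ with the usual conventions, whereas its probabilistic interpretation and the displayed integral identity hold only $\mathbf{P}_{X}$-a.s. Alternatively, one could avoid citing Corollary~\ref{cor:(Bayes-formula)-a.s.} and argue directly from Bayes' Theorem together with part~(i) of the Lemma, which gives $\frac{d\mathbf{G}_{C}}{d\mu}(x)=\int_{C}p(r,x)\mathbf{P}_{\tau}(dr)$, and the chain rule $\frac{d\mathbf{G}_{C}}{d\mathbf{P}_{X}}=\frac{d\mathbf{G}_{C}/d\mu}{d\mathbf{P}_{X}/d\mu}$ valid $\mathbf{P}_{X}$-a.s.; but this merely reproduces the proof of Corollary~\ref{cor:(Bayes-formula)-a.s.}.
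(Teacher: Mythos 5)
Your proof is correct, and it coincides with what the paper intends: the appendix explicitly states these Bayes results ``without proofs,'' and the corollary is indeed nothing more than Corollary \ref{cor:(Bayes-formula)-a.s.} with the (a.s.\ finite and nonzero) denominator pulled inside the integral over $C$. Your additional care about the $\mathbf{P}_{X}$-null set where the denominator degenerates, and the normalization check via $C=E_{1}$, are exactly the right bookkeeping.
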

For the proof of Theorem \ref{lem:PRE-theo-4.3}, we have to choose  $(E_1,\mathcal{E}_1)=(\mathbb{R}_+,\mathcal{B}(\mathbb{R}_+))$, $(E_2,\mathcal{E}_2)=(\mathbb{R},
\mathcal{B}(\mathbb{R}))$, $X=\beta_t$, $\tau$ as the default time and the $\sigma$-finite measure $\mu$ as the measure $\delta_0+l$ where $\delta_0$ is the Dirac measure at $0$ and $l$ is the Lebesgue measure on $\mathbb{R}$. Then Corollary \ref{cor:(Bayes-formula)-a.s.} can be applied to the second term on the right-hand side of \eqref{prop:HJ1second} which yields the second term on the right hand side of \eqref{eq:condexptau-1}. 
\section{\label{sec:Proof of Innovation lemma} Proof of Proposition \ref{pro:innovationLEMMA} (Innovation Lemma)}
\noindent We start by observing that the asssumption \eqref{ass:innovation-lemma} of Proposition \ref{pro:innovationLEMMA} implies
\[
\int_{0}^{t}\left|Z_{s}\right|\, ds<+\infty,\quad t\geq0, \;\mathbf{P}\textrm{-a.s.}
\]
Putting $Z$ equal to zero on the negligible set where  $\int_0^t |Z_s|\, ds=+\infty$ for some $t\geq0$, we can assume without loss of generality that this property holds everywhere. Hence $\int_{0}^{t}Z_{s}\, ds$ is well defined and finite everywhere.

By Remark \ref{rem:If--for rem 63} we know that for the optional projection $^o\!Z$ of $Z$ with respect to $\mathbb{F}^X$ we have that $^{o}\!Z_{s}=\mathbf{E}\left[Z_{s}|\mathcal{F}_{s}^{X}\right]$, $\mathbf{P}$-a.s., for all $s\geq0$. This yields $\mathbf{E}\left[\int_{0}^{t}\left|^{o}\!Z_{s}\right|\, ds\right]\leq \mathbf{E}\left[\int_{0}^{t}\left|Z_{s}\right|\, ds\right]<+\infty$, $t\geq0$, and hence $\int_{0}^{t}|{^o\!Z}_{s}|\, ds<+\infty$, for all $t\geq0$, $\mathbf{P}$-a.s. As above, without loss of generality we can modify $^{o}\!Z$ on a $\mathbf{P}$-negligible set such that $\int_{0}^{t}\left|^{o}\!Z_{s}\right|\, ds<+\infty$, $t\geq0$, everywhere, while being still the optional projection of $Z$. Hence $\int_{0}^{t}\,^{o}\!Z_{s}\, ds$ is well-defined and finite everywhere, for all $t\geq0$. Since $^o\!Z$ is $\mathbb{F}^X$-optional, it is clear that the process $\int_{0}^{\cdot}\,^{o}\!Z_{s}\, ds$ is $\mathbb{F}^{X}$-adapted. Therefore, the process $b$ with $b_t:=X_t-\int_0^t{^o\!Z}_s\, ds$, $t\geq0$, is $\mathbb{F}^{X}$-adapted. Furthermore, $Z$ being $\mathbb{F}$-optional, the process $\int_{0}^{\cdot}Z_{s}\, ds$ is $\mathbb{F}$-adapted and hence the process $X$ is also $\mathbb{F}$-adapted. This yields the inclusion $\mathbb{F}^X\subseteq\mathbb{F}$.

Next we show that the process $b$ is an $\mathbb{F}^{X}$-martingale. Obviously, $b_{t}$ is integrable for all $t\geq0$ and, as shown above, $b$ is  $\mathbb{F}^{X}$-adapted. Let $0\leq s<t$. For showing the martingale property, using Fubini's theorem, we first notice that there exists a Borel set $\Lambda\subseteq\mathbb{R}_+$ such that $l_+(\mathbb{R}_+\setminus \Lambda)=0$ and $Z_s$, and hence also $^o\!Z_s$, is integrable for all $s\in\Lambda$. Now 
\begin{align*}
\lefteqn{\mathbf{E}\left[b_{t}-b_{s}|\mathcal{F}_{s}^{X}\right]}\\
&=\mathbf{E}\left[B_{t}-B_{s}|\mathcal{F}_{s}^{X}\right]+\mathbf{E}\left[\int_{s}^{t}\left(Z_{u}-{}^{o}\!Z_{u}\right)du|\mathcal{F}_{s}^{X}\right]\\
&=\mathbf{E}\left[\mathbf{E}\left[B_{t}-B_{s}|\mathcal{F}_{s}\right]|\mathcal{F}_{s}^{X}\right]+\mathbf{E}\left[\int_{s}^{t}\mathbb{I}_{\Lambda}\left(u\right)\,\left(Z_{u}-{}^{o}\!Z_{u}\right)du|\mathcal{F}_{s}^{X}\right]\\
 & =\int_{0}^{t}\mathbb{I}_{\Lambda}\left(u\right)\,\mathbf{E}\left[Z_{u}-{}^{o}\!Z_{u}|\mathcal{F}_{s}^{X}\right]du\\
&=\int_{0}^{t}\mathbb{I}_{\Lambda}\left(u\right)\,\mathbf{E}\left[\mathbf{E}\left[Z_{u}-{}^{o}\!Z_{u}|\mathcal{F}^X_u\right]|\mathcal{F}_{s}^{X}\right]du=0
\end{align*}
where we have used that $\mathbb{F}^X\subseteq\mathbb{F}$ and that $B$ is an $\mathbb{F}$-martingale, Fubini's theorem and properties of the optional projection (see Remark \ref{rem:If--for rem 63}). This proves that $b$ is a continuous $\mathbb{F}^X$-martingale. 

Finally, since $B$ is an $\mathbb{F}$-Brownian motion stopped at $T$, from the definition of $b$ it is clear that $b$ is a continuous $\mathbb{F}$-semimartingale with square variation process $\langle b,b\rangle$: $\langle b,b\rangle_t=t\wedge T$, $t\geq 0$. In view of the well-known fact that the square variation of continuous semimartingales does not depend on the filtration, this is also true with respect to $\mathbb{F}^X$. This proves that $b$ is an $\mathbb{F}^X$-Brownian motion stopped at $T$. 
\section{\label{sec:Proofs-of-Section} Proof of Lemma \ref{lem:On-the-set-2}}
\noindent First we recall that $(t_{n})_{n\in\mathbb{N}}$
is a strictly decreasing sequence converging to $0$ such that 
$0<t_{n+1}<t_{n}<\ldots t_{1}<\varepsilon,\, t_{n}\downarrow0$.
For proving that $\lim_{n\rightarrow\infty}\phi_{t_{n}}(r,\beta_{t_{n}})=1$, using the assumption that $\mathbf{P}(\tau>\varepsilon)=1$, \eqref{eq:densitybeta2} and \eqref{eq:bbdensity}, we can write
\begin{align}
\nonumber\lefteqn{\phi_{t_{n}}\left(r,\beta_{t_{n}}\right)}\\
=&\frac{\left(2\pi t_{n}\right)^{-1/2}\,{r}^{1/2}\left(r-t_{n}\right)^{-{1/2}}\,\exp\big[-\beta_{t_{n}}^{2}r/2t_{n}\left(r-t_{n}\right)\big]\mathbb{I}_{\left(t_n,+\infty\right)}\left(r\right)}{\!\!\!\!\int\limits_{\left(\varepsilon,+\infty\right)}\left(2\pi t_{n}\right)^{-1/2}\,{s}^{1/2}\left(s-t_{n}\right)^{-1/2}\, \exp\big[-\beta_{t_{n}}^{2}s/2t_{n}\left(s-t_{n}\right)\big]\, dF\left(s\right)}\\
=&\frac{r^{1/2}\left(r-t_{n}\right)^{-1/2}\; \mathbb{I}_{\left(t_n,+\infty\right)}\left(r\right)}{\int\limits_{\left(\varepsilon,+\infty\right)}{s}^{1/2}\left(s-t_{n}\right)^{-1/2}\exp\big[\beta_{t_{n}}^{2}\left(s-r\right)/2\left(r-t_{n}\right)\left(s-t_{n}\right)\big]\,dF\left(s\right)}\,.\label{eq:boh}
\end{align}
First we note that for $s\in\left(\varepsilon,+\infty\right)$  
\begin{equation}\label{Estimate:square-root}
1\leq s^{1/2}\left(s-t_{n}\right)^{-1/2}\leq\varepsilon^{1/2} \left(\varepsilon-t_{n}\right)^{-1/2}\leq\varepsilon^{1/2}\left(\varepsilon-t_{1}\right)^{-1/2}\,.
\end{equation}
Secondly, if $s\in(\varepsilon,r)$, then
$\exp\left[\beta_{t_{n}}^{2}(s-r)/2(r-t_{n})(s-t_{n})\right]\leq1,$
while for $s\in[r,+\infty)$ we have that $(s-r)/(s-t_{n})\leq1$,
and thus $$
\exp\left[\beta_{t_{n}}^{2}(s-r)/2(r-t_{n})(s-t_{n})\right]\leq\exp\left[\beta_{t_{n}}^{2}/2(r-t_{n})\right].
$$
We note that the right-hand side of this inequality is bounded with respect to $n$, since $t_n\downarrow0$ and $\beta_{t_n}\rightarrow0$. 
Furthermore, 
\[
\lim_{n\rightarrow\infty}s^{1/2}\left(s-t_{n}\right)^{-1/2}\exp\left[\beta_{t_{n}}^{2}\left(s-r\right)/2\left(r-t_{n}\right)\left(s-t_{n}\right)\right]=1\,.
\]
Thus we can apply Lebesgue's bounded convergence theorem and it follows
that
\[
\lim_{n\rightarrow\infty}
\int\limits_{\left(\varepsilon,+\infty\right)}\!\!\!\!s^{1/2}\left(s-t_{n}\right)^{-1/2}\exp\left[\beta_{t_{n}}^{2}\left(s-r\right)/2\left(r-t_{n}\right)\left(s-t_{n}\right)\right]\,dF\left(s\right)=1\,.
\]
Finally, as for the numerator in \eqref{eq:boh}, 
$$
\lim_{n\rightarrow\infty}r^{1/2}(r-t_{n})^{-1/2}\, \mathbb{I}_{(t_n,+\infty)}(r)=1,\; r>0\,,
$$ 
we have $\lim_{n\rightarrow\infty}\phi_{t_{n}}(r,\beta_{t_{n}})=1$.

In order to prove the $\mathbf{P}_\tau$-a.s. uniform boundedness of $\phi_{t_{n}}(\cdot,\beta_{t_{n}})$, first we notice that in view of  \eqref{Estimate:square-root} the numerator in \eqref{eq:boh} is uniformly bounded in $r\in(\varepsilon,+\infty)$ and using the assumption that $\mathbf{P}(\tau>\varepsilon)=1$ it follows that it is $\mathbf{P}_\tau$-a.s. uniformly bounded in $r\in(0,+\infty)$, too. It remains to verify that the denominator in \eqref{eq:boh} is bounded from below by a strictly positive constant only depending on $\varepsilon$ and $\omega$. Using \eqref{Estimate:square-root}, for $s\in(\varepsilon,+\infty)$ the integrand in the denominator of \eqref{eq:boh} can be estimated from below by
\begin{eqnarray*}
\lefteqn{{s}^{1/2}\left(s-t_{n}\right)^{-1/2}\exp\big[\beta_{t_{n}}^{2}\left(s-r\right)/2\left(r-t_{n}\right)\left(s-t_{n}\right)\big]}\\
&\geq&\exp\big[\beta_{t_{n}}^{2}\left(s-r\right)/2\left(r-t_{n}\right)\left(s-t_{n}\right)\big]\\
&\geq&\exp\left[-\beta_{t_{n}}^{2}/\left(\varepsilon-t_{1}\right)\right]\, \mathbb{I}_{\left(\varepsilon,r\right)}+\mathbb{I}_{\left[r,+\infty\right)}\geq e^{-\gamma}
\end{eqnarray*}
where $\gamma=\gamma(\varepsilon,\omega)=\sup_{n\geq1}\beta_{t_{n}}^{2}(\omega)/(\varepsilon-t_{1})<+\infty$. Hence for the denominator in \eqref{eq:boh} it follows
\begin{eqnarray*}
\lefteqn{e^{-\gamma}=\int_{\left(\varepsilon,+\infty\right)}e^{-\gamma}\, dF\left(s\right)}\\
&\leq&\int_{\left(\varepsilon,+\infty\right)}{s}^{1/2}\left(s-t_{n}\right)^{-1/2}\exp\big[\beta_{t_{n}}^{2}\left(s-r\right)/2\left(r-t_{n}\right)\left(s-t_{n}\right)\big]\,dF\left(s\right)\,.
\end{eqnarray*}
The proof of Lemma \ref{lem:On-the-set-2} is completed.
\end{appendix}
\subsection*{Acknowledgment}
\noindent This work has been financially supported by the European Community's FP 7 Program under contract PITN-GA-2008-213841, Marie Curie ITN \flqq Controlled Systems\frqq.

\vspace{0.4in}
\noindent Matteo Ludovico Bedini, Intesa Sanpaolo, Milano, Italy;\\
e-mail: matteo.bedini@intesasanpaolo.com

\vspace{0.2in}
\noindent Rainer Buckdahn, Laboratoire de Math\'ematiques CNRS-UMR 6204,\\ Universit\'e de Bretagne Occidentale, Brest, France; School of Mathematics, Shandong University, Jinan, Shandong Province, P.R.China;\\
e-mail: rainer.buckdahn@univ-brest.fr 

\vspace{0.2in}
\noindent Hans-J\"urgen Engelbert, Friedrich-Schiller-Universit\"at Jena, Jena, Germany; e-mail: hans-juergen.engelbert@uni-jena.de

\begin{thebibliography}{99}
\bibitem{key-10} Bedini M. L. \textit{Information on a Default
Time: Brownian Bridges on Stochastic Intervals and Enlargment of Filtrations}. PhD Thesis, Friedrich Schiller University of Jena (Germany), 2012.

\bibitem{key-29} Bielecki T.R., Jeanblanc M., M. Rutkowski.
\textit{Hedging of basket of credit derivatives in a credit default
swap market}. Journal of Credit Risk, 3: 91--132, 2007.

\bibitem{key-5} Blumenthal R.M., Getoor R.K. \textit{Markov
Processes and Potential Theory}. Academic Press, 1968.

\bibitem{key-20} Brody D., Hughston L., Macrina A. \textit{Beyond
Hazard Rates: A New Framework for Credit-Risk Modeling}. Advances
in Mathematical Finance: Festschrift Volume in Honour of Dilip Madan, pp. 231--257, Basel: Birkhäuser, 2007.

\bibitem{key-21} Bielecki T.R., Rutkowski M. \textit{Credit
risk: Modelling Valuation and Hedging}. Springer Verlag, Berlin 2001.

\bibitem{key-18} Dellacherie C. \textit{Capacités et processus
stochastiques}. Volume 67 of Ergebnisse. Springer 1972. 

\bibitem{key-7} Dellacherie C., Meyer P.-A. \textit{Probabilities
and Potential}. North-Holland, 1978.

\bibitem{key-23} Jeanblanc M., Le Cam Y. \textit{Immersion
property and Credit Risk Modelling}. Optimality and Risk - Modern
Trends in Mathematical Finance, pp. 99--132. Springer Berlin Heidelberg, 2010.

\bibitem{key-24} Jeanblanc M., Le Cam Y.\textit{ Progressive
enlargement of filtrations with initial times}. Stoch. Process. Appl., 2009, v. 119, No. 8, 2523--2543.

\bibitem{key-25} Jeanblanc M., Le Cam Y. \textit{Reduced
form modelling for credit risk}. Preprint 2007, availabe at: http://ssrn.com/abstract=1021545.

\bibitem{key-4} Jeanblanc M., Yor M., Chesney M. \textit{Mathematical
Methods for Financial Markets}. Springer, First edition, 2009.

\bibitem{key-6} Kallenberg O. \textit{Foundations of Modern Probability}.
Springer-Verlag, New-York, Second edition, 2002.

\bibitem{key-1} Karatzas I., Shreve S.  \textit{Brownian Motion
and Stochastic Calculus}. Springer- Verlag, Berlin, Second edition,
1991.

\bibitem{key-3} Revuz D., Yor M. \textit{Continuous Martingales
and Brownian Motion}. Springer-Verlag, Berlin, Third edition, 1999.

\bibitem{key-1000} Rogers L.C.G., Williams D. \textit{Diffusions,
Markov Processes and Martingales. Vol. 2: Itô Calculus}. Cambridge
University Press, Second edition, 2000.

\bibitem{key-8} Shiryaev A.N. \textit{Probability}. Springer Verlag,
Second Edition, 1991.
\end{thebibliography}
\end{document}